\documentclass[a4paper,11pt]{article}
\usepackage[latin1]{inputenc}
\usepackage{amsfonts}
\usepackage{amsthm}
\usepackage{amsmath}
\usepackage{amsfonts}
\usepackage{latexsym}
\usepackage{amssymb}
\usepackage{dsfont}

\newtheorem{fed}{Definition}[section]
\newtheorem*{fed*}{Definition}
\newtheorem*{feds*}{Definitions}
\newtheorem{teo}[fed]{Theorem}
\newtheorem*{teo*}{Theorem}
\newtheorem{lem}[fed]{Lemma}
\newtheorem{cor}[fed]{Corollary}
\newtheorem{pro}[fed]{Proposition}
\theoremstyle{definition}
\newtheorem{rem}[fed]{Remark}
\newtheorem{conj}[fed]{Conjecture}
\newtheorem*{rems*}{Remarks}

\newtheorem{exa}[fed]{Example}

\newtheorem{nota}[fed]{Notations}

\def\ID{Let $(\cF_0 \coma \ca)$ be initial data
for the CP} 

\def\coma{\, , \, }

\def\py{\peso{and}}
\newcommand{\peso}[1]{ \quad \text{ #1 } \quad }

\oddsidemargin -0in \topmargin -0.5in \textwidth 16.54truecm
\textheight 24.3truecm
\def\n0{n_{ \text{\rm \tiny o}}}

\newcommand{\IN}[1]{\mathbb {I} _{#1}}
\def\In{\mathbb {I} _n}
\def\IM{\mathbb {I} _m}

\def\QEDP{\tag*{\QED}}

\def\bce{\begin{center}}
\def\ece{\end{center}}

\DeclareMathOperator{\FP}{FP\,}

\def\cD{\mathcal D}

\def\py{\peso{and}}
\def\rk{\text{\rm rk}}
\def\noi{\noindent}
\def\cF{\mathcal F}
\def\cG{\mathcal G}
\def\QED{\hfill $\square$}
\def\EOE{\hfill $\triangle$}
\def\EOEP{\tag*{\EOE}}
\def\uno{\mathbb{1}}
\def\bm{\left[\begin{array}}
\def\em{\end{array}\right]}
\def\ben{\begin{enumerate}}
\def\een{\end{enumerate}}
\def\bit{\begin{itemize}}
\def\eit{\end{itemize}}
\def\barr{\begin{array}}
\def\earr{\end{array}}
\def\igdef{\ \stackrel{\mbox{\tiny{def}}}{=}\ }

\def\la{\lambda}
\def\al{\alpha}
\def\N{\mathbb{N}}
\def\R{\mathbb{R}}
\def\C{\mathbb{C}}
\def\I{\mathbb{I}}
\def\Z{\mathbb{Z}}
\def\cA{\mathcal{A}}

\def\T{\mathbb{T}}
%-------------------------------------------------------------------------------------

\def\cR{{\cal R}}
\def\cS{{\cal S}}
\def\cT{{\cal T}}
\def\cM{{\cal M}}
\def\cB{{\cal B}}

\def\cV{{\cal V}}

\def\cW{{\cal W}}

\def\cX{\mathcal{X}}
\def\cY{\mathcal{Y}}
\def\cZ{\mathcal{Z}}

\def\orto{^\perp}
\def\inc{\subseteq}

\def\inv{^{-1}}

\def\api{\langle}
\def\cpi{\rangle}

\def\ua{^\uparrow}
\def\da{^\downarrow}

 \DeclareMathOperator{\tr}{tr}
\DeclareMathOperator{\gen}{span}

\DeclareMathOperator{\geqp}{\geqslant}
\DeclareMathOperator{\convf}{Conv (\R^+)}
\DeclareMathOperator{\convfs}{Conv_s (\R^+)}

\newcommand{\hil}{\mathcal{H}}

\newcommand{\esssup}{{\mathrm{ess}\sup}}
\newcommand{\essinf}{{\mathrm{ess}\inf}}

\newcommand{\mat}{\mathcal{M}_d(\mathbb{C})}
\newcommand{\matn}{\mathcal{M}_n(\mathbb{C})}

\newcommand{\matpos}{\mat^+}
\newcommand{\matposn}{\matn^+}

\newcommand{\matinvd}{\mathcal{G}\textit{l}\,(d)}

\def\beq{\begin{equation}}
\def\eeq{\end{equation}}

\def\pausa{\medskip\noi}

%------------defs nuevas
\def\const{C_\cW}

\def\duv{\cD_\cV^{SG}(\cF)}
\def\duvw{\cD_{\cV\coma w}^{SG}(\cF)}
\def\Sast{A}
\def\Bx{[\Sast]_x}
\def\Ax2{\,[ S_{E(\cF)^\#_\cV}]_x }
\def\fe{\peso{for every}}

\begin{document}
\title{Convex potentials and optimal shift generated \\ oblique duals  in shift invariant spaces}
\author{Mar\'\i a J. Benac $^{*}$, Pedro G. Massey $^{*}$, and Demetrio Stojanoff 
\footnote{Partially supported by CONICET 
(PIP 0435/10) and  Universidad Nacional de La PLata (UNLP 11X681) } \ %\hspace{19cm}
 \footnote{ e-mail addresses: mjbenac@gmail.com , massey@mate.unlp.edu.ar , demetrio@mate.unlp.edu.ar}
\\ 
{\small Depto. de Matem\'atica, FCE-UNLP
and IAM-CONICET, Argentina  }}

\date{}
\maketitle
\begin{abstract}
We introduce extensions of the convex potentials for finite frames (e.g. the frame potential defined by Benedetto and Fickus) in the framework of Bessel sequences of integer translates of finite sequences in $L^2(\R^k)$. We show that under a natural normalization hypothesis, these convex potentials detect tight frames as their minimizers. We obtain a detailed spectral analysis of the frame operators of shift generated oblique duals of a fixed frame of translates. We use this result to obtain the spectral and geometrical structure of optimal shift generated  oblique duals with norm restrictions, that simultaneously minimize every convex potential; we approach this problem by showing that the water-filling construction in probability spaces is optimal with respect to submajorization (within an appropriate set of functions) and by considering a non-commutative version of this construction for measurable fields of positive operators. 
\end{abstract}

\noindent  AMS subject classification: 42C15.

\noindent Keywords: frames of translates, shift invariant subspaces, oblique duality, majorization, 
convex potentials. 

\newpage
\tableofcontents

\section{Introduction}\label{intro}

Let $\cW$ be a closed subspace of a separable complex Hilbert space $\hil$ and let $\mathbb I$ be a finite or countable infinite set. 
%%A (possibly finite) sequence $\cF=\{f_i\}_{i\in \mathbb I}$ in $\cW$ is a frame for $\cW$ if there exist positive constants $0<a\leq b$ such that 
%%$$ 
%%a \, \|f\|^2\leq \sum_{i\in \mathbb I}|\langle f,f_i\rangle |^2\leq b\,\|f\|^2 
%%\peso{for every} f\in \cW\, . 
%%$$ 
%If $a=b$ then we say that $\cF$ is a tight frame for $\cW$.
%
%\pausa
A frame $\cF=\{f_i\}_{i\in \mathbb I}$ for $\cW$ allows for linear (typically redundant) and stable encoding-decoding schemes of vectors (signals) in $\cW$ (see Section \ref{sec defi oblique duals} for definitions and technical results). Indeed, if 
$\cV$ is a closed subspace of $\hil$ such that $\cV\oplus\cW^\perp=\hil$ (e.g. $\cV=\cW$) then  it is possible to find frames $\cG=\{g_i\}_{i\in \mathbb I}$ for $\cV$ such that 
$$f=\sum_{i\in \mathbb I}\langle f,g_i\rangle \ f_i \ , \quad \text{ for } f\in\cW\,.  $$
The representation above lies within the theory of oblique duality (see \cite{YEldar3,CE06,YEldar1,YEldar2}). 
In applied situations, it is usually desired to develop encoding-decoding schemes as above, with some additional features. In some cases, we search for schemes with prescribed properties (e.g., for which the sequence of norms $\{\|f_i\|^2\}_{i\in \mathbb I}$ as well as the spectral properties of the family $\cF$ are given in advance) leading to what is known in the literature as frame design problem (see \cite{AMRS,BF,CFMPS,CasLeo,MR08,Pot}). 
In other cases, we search for numerically robust oblique dual pairs $(\cF,\cG)$ as above, leading to what is known as optimal frame designs (\cite{BMS14,CE06,FMP,MR10,MRS13b,Telat}).

\pausa
In their seminal work \cite{BF}, Benedetto and Fickus introduced a functional defined on finite sequences of (unit norm) vectors $\cF=\{f_i\}_{i\in\In}$ (where $\In=\{1,\ldots,n\}$), the so-called {\it frame potential}, given by 
\beq\label{rep BFpot}
\FP(\cF)
=\sum_{i,\,j\,\in \In}|\api f_i\coma f_j \cpi |\,^2\ .
\eeq 
In case $\dim \hil=d\in\N$ then one of their major results shows that tight unit norm frames can be characterized as (local) minimizers of this functional, among unit norm frames. Since then, there has been interest in (local) minimizers of the frame potential within certain classes of frames, since such minimizers can be considered as natural substitutes of tight frames (see for example \cite{CKFT,MR10}).
Recently, there has been interest in the structure of minimizers of other potentials such as the so-called mean squared error (MSE) 
(see \cite{FMP,MRS13,Pot} and the references therein).  
Both the frame potential and the MSE are examples of the so-called convex potentials introduced in \cite{MR10}. It turns out that minimizers of these convex potentials share the spectral and geometrical structure of minimizers of the frame potential.  Now, it is a well known fact that in case $\cV=\cW$ then tight frames $\cF$ for $\cW$ - i.e. minimizers of convex potentials - give rise to optimal (numerically robust) dual pairs $(\cF,\cG)$. Therefore, it seems apparent that 
in the general case $\cV\oplus \cW^\perp$ the construction of robust oblique dual pairs $(\cF,\cG)$ is related with the construction of frames $\cF$ which are minimizers of convex potentials (e.g. the frame potential).

\pausa
It turns out that there is a single notion that ties all the previous problems together namely, the majorization preorder. Indeed, majorization is the key notion behind the frame design problems (see \cite{AMRS,CFMPS,CasLeo}) through natural extensions of the Schur-Horn theorem from matrix analysis (see \cite{AMRS,BoJa11,BoJa,KaWe}). Moreover, the relation between majorization and tracial inequalities with respect to convex functions allows to apply this notion in the study of convex potentials (\cite{BMS14,MR10,MRS13,MRS14,MRS13b}).
Unfortunately, the convex potentials considered in \cite{MR10} (in particular, the frame potential) can only be defined for finite frames. Hence, in the infinite dimensional context we loose a tool which have proved useful as a measure of stability for frames in finite dimensional Hilbert spaces. 

\pausa
In this paper we show that there are natural analogues of the convex potentials (and in particular, of the frame potential) in the context of Bessel sequences of integer translates $E(\cF)$ of a finite family of vectors $\cF=\{f_i\}_{i\in\In}$ that lie in a finitely generated shift invariant (FSI) subspace $\cW$ of $L^2(\R^k)$. We take advantage of the detailed structure of $E(\cF)$ as a fibered family over the torus 
$\T^k$ (see \cite{BDR,Bo,RS95}) and the theory of range functions for shift invariant (SI) subspaces and introduce the potential $P_{\varphi}^\cW(E(\cF))$ associated to a convex function $\varphi$ and $\cW$ as an integral over $\T^k$ of the corresponding potentials on the fibers (for related approaches to different problems in SI subspaces see \cite{ACHM07,HG07,HG09}). In order to verify that our definition is a natural extension of the convex potentials for finite frames we show that under natural normalization conditions, a family of integer translates $E(\cF)$ that is a tight frame for a FSI subspace $\cW$ is a minimizer of the convex potential associated to $\varphi$ and $\cW$.

\pausa
The convex potentials in FSI subspaces raise several questions related with optimal design problems. In particular, given FSI subspaces $\cW,\,\cV$ such that $\cV\oplus \cW^\perp=L^2(\R^k)$ and a finite family $\cF=\{f_i\}_{i\in\In}$ such that $E(\cF)$ is a frame for $\cW$, we consider the 
problem of designing optimal oblique duals $E(\cG)$ which are translates of a family $\cG=\{g_i\}_{i\in\In}$ in $\cV$ and such that $\cG$ satisfies the norm restrictions $\sum_{i\in\In}\|g_i\|^2\geq w$, for $w>0$.
 In order to deal with this problem we develop two new tools in the context of
frames of translates. On the one hand, we obtain what we call the fine spectral structure of 
shift generated (SG) oblique dual frames of the fixed frame $E(\cF)$, which is a detailed description of the eigenvalues of the measurable field of positive operators defined on $\T^k$ corresponding to the frame operators of SG oblique duals of $E(\cF)$. As a consequence, we derive necessary and sufficient conditions for the existence of a tight SG oblique dual $E(\cG)$ of $E(\cF)$. On the other hand, we consider the water-filling construction (both for functions in probability spaces as well as for measurable field of positive finite-rank operators) and show that this construction leads to optimal solutions of the oblique dual design problem; this is achieved by showing that the water-filling constructions are optimal with respect to majorization (considered in the general context of probability spaces) which is a result of independent interest. With these tools we completely solve the problem of designing optimal oblique dual frames with norm restriction mentioned before; it turns out that these optimal SG oblique duals are more stable than the so-called canonical oblique dual. We point out that the structure of the optimal solution is obtained in terms of a global analysis.
As a byproduct we extend the so-called Fan-Pall interlacing theorem from matrix analysis to the context of 
measurable fields of positive matrices.

\pausa
The paper is organized as follows. In section \ref{sec2} we describe several preliminary notions and facts from frame theory, SI subspaces, frames of translates and majorization theory in probability spaces. In section \ref{sec3} we introduce the convex potentials for frames of translates and show that are natural extensions of the convex potentials for finite frames. In section \ref{sec4} we recall several facts from the theory of oblique duality in FSI subspaces and obtain 
the precise value of the aliasing norm corresponding to the consistent sampling in this setting. Then, we describe the fine spectral structure of oblique duals of a fixed SG frame. Since this result depends on an extension of the Fan-Pall
interlacing theory, its proof is presented in an appendix (see section \ref{Apendixity}). In section \ref{sec applic} we study the problem of optimal design of oblique dual frames $E(\cG)$ - of a fixed finitely SG 
frame $E(\cF)$ - which satisfy certain norm restrictions. We first show that the water-filling construction for positive functions in probability spaces is optimal with respect to sub-majorization within a natural set of functions. We then construct optimal 
SG oblique duals with norm restrictions and explain the relation of our construction with a natural (non-commutative) water-filling construction for measurable field of positive finite-rank operators. The paper ends with an appendix section in which we develop the Fan-Pall interlacing theorem for measurable fields of positive matrices as well as some consequences of this result.

\section{Preliminaries}\label{sec2}

In this section we recall some basic facts related with frame theory, 
oblique duality and shift invariant (SI) subspaces of $L^2(\R^k)$. 
At the end of this section we describe majorization between functions in arbitrary probability spaces.

\def\IM{\mathbb {I} _m}
\def\ID{\mathbb {I} _d}
\def\uno{\mathds{1}}

\pausa
{\bf General Notation} 

\pausa
Throughout this work we shall use the following notation:
the space of complex $d\times d$ matrices is denoted by $\mat$ and $\matpos$ denotes the set of positive semidefinite
matrices.
$\matinvd$ is the group of invertible elements of $\mat$ and $\matinvd^+ = \matpos \cap \matinvd$. 
If $T\in \mat$, we  denote by   
$\|T\|$ its spectral norm,
by $\rk\, T= \dim R(T) $  the rank of $T$,
and by $\tr T$ the trace of $T$. 

\pausa
Given $d \in \N$ we denote by $\ID = \{1, \dots , d\} \inc \N$.
For a vector $x \in \R^m$ we denote  
by $x^\downarrow$ (resp. $x^\uparrow$) the rearrangement
of $x$ in  decreasing (resp. increasing) order. We denote by
$(\R^d)^\downarrow = \{ x\in \R^d : x = x^\downarrow\}$ the set of downwards ordered vectors, and 
$(\R^d)\ua 
=\{x\in \R^d:\ x=x\ua\}$.

\pausa
Given  $S\in \matpos$, we write $\la(S) = \la\da(S)= (\la_1(S) \coma \dots \coma \la_d(S)\,) \in 
(\R^d)^\downarrow$ the 
vector of eigenvalues of $S$ - counting multiplicities - arranged in decreasing order. 
Similarly we denote by $\la\ua(S) \in (\R^d)\ua$ the reverse ordered 
vector of eigenvalues of $S$.

\pausa
If $W\inc \C^d$ is a subspace we denote by $P_W \in \matpos$ the orthogonal 
projection onto $W$. 
Given $x\coma y \in \C^d$ we denote by $x\otimes y \in \mat$ the rank one 
matrix given by 
\beq \label{tensores}
x\otimes y \, (z) = \api z\coma y\cpi \, x \peso{for every} z\in \C^d \ .
\eeq
Note that, if $x\neq 0$, then
the projection $P_x \igdef P_{\gen\{x\}}= \|x\|^{-2} \, x\otimes x \,$.

\subsection{Frames for subspaces and oblique duality}\label{sec defi oblique duals}

In what follows $\hil$ denotes a separable complex Hilbert space and $\mathbb I$ denotes a finite or countable infinite set. 
Let $\cW$ be a closed subspace of $\hil$: recall that a sequence $\cF=\{f_i\}_{i\in \mathbb I}$ in $\cW$ is a {\it frame} for $\cW$ 
if there exist positive constants $0<a\leq b$ such that 
\beq\label{defi frame} 
a \, \|f\|^2\leq \sum_{i\in \mathbb I}|\langle f,f_i\rangle |^2\leq b\,\|f\|^2 
\peso{for every} f\in \cW\, . 
\eeq
In general, if $\cF$ satisfies the inequality to the right in Eq. \eqref{defi frame}
we say that $\cF$ is a $b$-Bessel sequence.

\pausa 
Given a Bessel sequence $\cF=\{f_i\}_{i\in \mathbb I}$ we consider its {\it synthesis operator} $T_\cF\in L(\ell^2(\mathbb I),\hil)$ given by $T_\cF((a_i)_{i\in \mathbb I})=\sum_{i\in \mathbb I} a_i\ f_i$ which, by hypothesis on $\cF$, is a bounded linear transformation. We also consider $T_\cF^*\in L(\hil,\ell^2(\mathbb I))$ called the {\it analysis operator} of $\cF$, given by $T_\cF^*(f)=(\langle f,f_i\rangle )_{i\in \mathbb I}$ and the {\it frame operator} of $\cF$ defined by $S_\cF=T_\cF\,T_\cF^*$. It is straightforward to check that
$$ 
\langle S_\cF f,f \rangle =\sum_{i\in \mathbb I}|\langle f,f_i\rangle |^2 \fe
 f\in \hil\ . 
 $$
Hence, $S_\cF$ is a positive semidefinite bounded operator; moreover, a Bessel sequence $\cF$ in $\cW$ 
is a frame for $\cW$ if and only if $S_\cF$ is an invertible operator when restricted to $\cW$ or equivalently, if the range of $T_\cF$ coincides with $\cW$.

\pausa
In order to describe oblique duality, we fix two closed subspaces $\cV,\,\cW\inc\hil$ such that $\cW^\perp \oplus \cV=\hil$, that is such that $\cW^\perp + \cV=\hil$ and $\cW^\perp \cap \cV=\{0\}$. Hence, $\cW^\perp$ is a common (algebraic) complement of $\cW$ and $\cV$.
It is well known that in this case $P_\cW|_\cV:\cV\rightarrow \cW$ is a linear bounded isomorphism so, in particular,
we see that $\dim \cV=\dim \cW$ as Hilbert spaces. Moreover, the conditions $\cW^\perp \oplus \cV=\hil$ and $\cW \oplus \cV^\perp=\hil$ are actually equivalent.

\pausa
Fix a frame $\cF=\{f_i\}_{i\in \mathbb I}$ for $\cW$. Following \cite{YEldar1,YEldar2} (see also  \cite{CE06}), given a Bessel sequence $\cG=\{g_i\}_{i\in \mathbb I}$  in $\cV$ we say that $\cG$ is a  (oblique) $\cV$-dual of $\cF$   if 
$$ 
f=\sum_{i\in \mathbb I} \langle f,g_i\rangle \ f_i \fe f\in \cW\ .
$$ 
It turns out (see \cite{YEldar1,YEldar2}) that if $\cG$ is a $\cV$-dual of $\cF$ then 
$T_\cF\,T_\cG^*=P_{\cW//\cV^\perp}$, where $P_{\cW//\cV^\perp}$ denotes the oblique projection with range $\cW$ and null space $\cV^\perp$. In this case, 
by taking adjoints in the previous identity we also get that $T_\cG\,T_\cF^*= P_{\cW//\cV^\perp}^*=P_{\cV//\cW^\perp}$.
Hence, $T_\cG$ is onto $\cV$ and then $\cG$ is a frame for $\cV$; moreover, we obtain the reconstruction formula
$$ 
g=\sum_{i\in \mathbb I} \langle g,f_i\rangle \ g_i \fe g\in \cV\ .
$$
We  consider the set of oblique $\cV$-duals of $\cF$ given by
\begin{equation}\label{eq duales}
\cD_\cV(\cF)=\left\{\cG=\{g_i\}_{i\in \mathbb I} \text{ is a }\cV\text{-dual of } \cF\right\}\ .
\end{equation}

\begin{rem}\label{rem classical canonical dual}
Let $\cF=\{f_i\}_{i\in \mathbb I} $ be a frame for $\cW$. If we set $\cV=\cW$ then a Bessel sequence $\cG$ in $\cW$ is a $\cW$-dual of $\cF$ if it is a dual frame for $\cF$ in $\cW$ in the classical sense (see \cite{Chrisbook}) i.e. $T_\cG\,T_\cF^*=P_\cW$. Hence
$$
\cD_\cW(\cF)=\cD(\cF)\igdef
\left\{\cG=\{g_i\}_{i\in \mathbb I} \text{ is a dual frame for } \cF \text{ in } \cW\,   \right\}\ .
$$ Recall that there is a distinguished (classical) dual, called the canonical dual of $\cF$, denoted $\cF^\#=\{f_i^\#\}_{i\in \mathbb I}$, given by $f_i^\#=S_\cF^\dagger f_i$ for $i\in \mathbb I$, where $S_\cF^\dagger$ denotes the Moore-Penrose pseudo-inverse of the (closed range positive semidefinite operator ) $S_\cF\,$.

\pausa
In the general context of oblique duality there also exists a distinguished $\cV$-dual for $\cF$, the so-called {\it canonical $\cV$-dual}, which we denote by $$
\cF^\#_{\cV}=\{f^\#_{\cV,\,i}\}_{i\in \mathbb I} \peso{given by} f^\#_{\cV,\,i}=P_{\cV//\cW^\perp} f^\#_i  \fe i\in \mathbb I  \ ,
$$ 
where $\cF^\#=\{f_i^\#\}_{i\in \mathbb I}$ is the canonical dual of $\cF$. 
It turns out that the encoding-decoding scheme based on the oblique dual pair $(\cF,\cF^\#_{\cV})$ has several optimality properties (see \cite{YEldar1,YEldar2}). 
\EOE
\end{rem}

\subsection{Shift-invariant subspaces and frames of translates}\label{SI cosas}

In what follows we consider $L^2(\R^k)$ (with respect to Lebesgue measure) as a separable and complex Hilbert space.
Recall that a closed subspace $\cV\subseteq L^2(\R^k)$ is {\it shift-invariant} (SI) if $f\in \cV$
implies $T_\ell f \in \cV$ for any $\ell\in \Z^k$, where $T_yf(x)=f(x-y)$ is
the translation by $y \in \R^k$. 
For example, if we take a subset $\cA \subset
L^2(\R^k)$ then, \beq \label{def: SI} \cS(\cA)= \overline{\text{span}}\,\ \{T_\ell f: f\in \cA,\  \ell \in
\Z^k\} \eeq
is a shift-invariant subspace called the {\it SI subspace generated by $\cA$}. Indeed, $\cS(\cA)$ is the smallest SI subspace that contains $\cA$. We say that a SI subspace $\cV$ is {\it finitely generated} (FSI) if there exists a finite set $\cA\subset L^2(\R^k)$ such that $\cV=S(\cA)$. In this case, the {\it length} of $\cV$ is the smallest cardinal $\#(\cA)$ such that $S(\cA)=\cV$.

\pausa In order to describe the fine structure of a SI subspace we consider the following representation of $L^2(\R^k)$ (see 
\cite{ BDR,Bo,Helson,RS95} and \cite{CabPat} for extensions of these notions to the more general context of actions of locally compact abelian groups). Let $\T=[-1/2,1/2)$ be endowed with the Lebesgue measure and let $L^2(\T^k, \ell^2(\Z^k))$ be the Hilbert space of square integrable $\ell^2(\Z^k)$-valued functions that consists of all vector valued measurable functions $\phi: \T^k \to \ell^2(\Z^k)$ with the norm 
$$\| \phi\|^2= \int_{\T^k} \| \phi(x)\|_{\ell^2(\Z^k)}^{2} \ dx< \infty.$$
Then, $\Gamma: L^2(\R^k)\to L^2(\T^k, \ell^2(\Z^k))$ defined for $f\in L^1(\R^k)\cap L^2(\R^k)$ by 
\beq\label{def: iso}
\Gamma f: \T^k \to \ell^2(\Z^k)\ ,\quad\Gamma f(x)= (\hat{f}(x+\ell))_{\ell\in \Z^k},
\eeq
extends uniquely to an isometric isomorphism between $L^2(\R^k)$ and $L^2(\T^k, \ell^2(\Z^k))$; here $\hat f$ denotes the Fourier transform of $f\in L^2(\R^k)$.

\pausa 
Let $\cV\subset L^2(\R^k)$ be a SI subspace. Then, there exists a function $J_\cV:\T^k\rightarrow\{$ closed subspaces of 
$\ell^2(\Z^k)\}$ such that: if $ P_{J_\cV(x)}$ denotes the orthogonal projection onto $J_\cV(x)$ for $x\in\T^k$, then for every $\xi,\,\eta\in \ell^2(\Z^k)$ the function $x\mapsto \langle P_{J_\cV(x)} \,\xi\coma \eta\rangle$ is measurable and 
\beq\label{pro: V y J}
\cV=\{ f\in L^2(\R^k): \Gamma f(x) \in J_\cV(x) \,\ \text{for a.e.}\,\ x\in \T^k\}.
\eeq The funcion $J_\cV$ is the so-called {\it measurable range function} associated with $\cV$. By \cite[Prop.1.5]{Bo}, Eq. \eqref{pro: V y J} establishes a bijection between 
SI subspaces of $L^2(\R^k)$ and measurable range functions.
In case 
$\cV=S(\mathcal A) \subseteq L^2(\R^k)$ is the SI subspace generated by $\mathcal A=\{h_i:i\in \mathbb I\}\subset L^2(\R^k)$, where $\mathbb I$ is a finite or countable infinite set, then for a.e. $x\in\T^k$ we have that 
\beq\label{eq Jv}
J_\cV(x)=\{\Gamma h_i(x): \ i\in \mathbb I\}^{-\|\cdot\|}\,.
\eeq

\pausa
%-------------------------------------
%
%Given SI subspaces $\cV$ and $\cW$, recall that a bounded linear transformation 
%$S\in L(\cV,\cW)$ is {\it shift preserving} (SP) if 
%$T_\ell \, S=S\,T_\ell$ for every $\ell\in\Z^k$. 
%In this case, for $x\in \T^k$, let 
%$ [S]_x\in L(J_\cV(x),J_\cW(x))$ be the linear transformation determined by 
%\beq\label{defi hatS2}
 %[S]_x \big(\Gamma f(x)\,\big)=\Gamma (Sf) (x) \quad \text{ for }\ f\in \cV\,.
%\eeq
%Then, (see \cite[Prop.4.5]{Bo}) the function $[S]:\T^k\rightarrow \coprod_{x\in \T^k}L(J_\cV(x),J_\cW(x))$ is weakly measurable, i.e. 
%for every $\xi,\,\eta\in \ell^2(\Z^k)$ the function 
%$x\mapsto \langle [S]_x\,P_{J_\cV(x)} \,\xi\coma \eta \rangle $ 
%is measurable, and essentially bounded; indeed 
%$\text{ess}\,\sup_{x\in \T^k}\|[S]_x\|=\|S\|$. 
%Conversely, if $s:\T^k\rightarrow \coprod_{x\in \T^k}L(J_\cV(x),J_\cW(x))$ is weakly measurable and essentially bounded then there exists a unique bounded linear transformation $S\in L(\cV,\cW)$ that is SP and such that $[S]=s$.
%
%---------------------------------------------
Recall that a bounded linear transformation 
$S\in L(L^2(\R^k))$ is {\it shift preserving} (SP) if 
$T_\ell \, S=S\,T_\ell$ for every $\ell\in\Z^k$. In this case (see \cite[Thm 4.5]{Bo}) there exists
a (weakly) measurable field of operators $[S]_{(\cdot)}:\T^k\rightarrow \ell^2(\Z^k)$ (i.e. such that 
for every $\xi,\,\eta\in \ell^2(\Z^k)$ the function 
$\T^k\ni x\mapsto \langle [S]_x\, \xi\coma \eta \rangle $ 
is measurable) and essentially bounded (i.e. the function $\T^k\ni x\mapsto \|\,[S]_{x}\,\|$ is essentially bounded)
such that 
\beq\label{defi hatS}
 [S]_x \big(\Gamma f(x)\,\big)=\Gamma (Sf) (x) \quad \text{ for a.e. }x\in\T^k\ , \ \ f\in L^2(\R^k)\,.
\eeq Moreover, $\|S\|=\esssup_{x\in\T^k} \|\, [S]_x\, \|$.
Conversely, if $s:\T^k\rightarrow L(\ell^2(\Z^k))$ is a weakly measurable and essentially bounded field of operators then,
 there exists a unique bounded operator $S\in L(L^2(\R^k))$ that is SP and such that $[S]=s$. 
For example, let $\cV$ be a SI subspace and consider $P_\cV\in L(L^2(\R^k))$, the orthogonal projection 
onto $\cV$; then, $P_\cV$ is SP so that 
$[P_\cV]{} :\T^k\rightarrow L(\ell^2(\Z^k))$ is given by $[P_\cV]{}_x=P_{J_\cV(x)}$ i.e., the orthogonal projection onto $J_\cV(x)$, for a.e. $x\in\T^k$.

\pausa The previous notions associated with SI subspaces and SP operators allow to develop a detailed study of frames of translates. 
Indeed, let $\cF=\{f_i\}_{i\in \mathbb I}$ be a (possibly finite) sequence in $L^2(\R^k)$. We denote by $E(\cF)$ the family of translates of $\cF$, namely $E(\cF)=\{T_\ell f_i\}_{(\ell,i)\in \Z^k\times \mathbb I}$. For $x\in \T^k$, let $\Gamma\cF(x)=\{\Gamma f_i(x)\}_{i\in \mathbb I}$ which is a (possibly finite) sequence in $\ell^2(\Z^k)$. Then \cite{Bo,RS95} $E(\cF)$ is a $b$-Bessel sequence if and only if $\Gamma\cF(x)$ is a $b$-Bessel sequence for a.e. $x\in \T^k$. In this case,  
we consider $T_{\Gamma\cF(x)}:\ell^2(\mathbb I)\rightarrow \ell^2(\Z^k)$ and $S_{\Gamma\cF(x)}:\ell^2(\Z^k)\rightarrow \ell^2(\Z^k)$  the  synthesis and frame operators of $\Gamma\cF(x)$, respectively, for $x\in\T^k$; it is straightforward to check that $S_{E(\cF)}$ is a SP operator. 

\pausa
If $\cF=\{f_i\}_{i\in \mathbb I}$ and $\cG=\{g_i\}_{i\in \mathbb I}$ are such that $E(\cF)$ and $E(\cG)$ are Bessel sequences then (see \cite{HG07,RS95}) the following fundamental relation holds: 
\beq\label{eq:fourier}
[T_{E(\cG)}\,T^*_{E(\cF)}]{}_x 
= T_{\Gamma\cG(x)}\,T^*_{\Gamma\cF(x)}\ , \quad  \text{for a.e }\, x \in \T^k \,.
\eeq
Eq. \eqref{eq:fourier} has several consequences; indeed, if 
$\cW$ is a SI subspace of $L^2(\R^k)$ and we assume further that $\cF,\,\cG\in\cW^n$ then:
\begin{enumerate}
\item For every $f,\,g\in L^2(\R^k)$, 
$$
\langle S_{E(\cF)}\, f,\,g\rangle =\int_{\T^k} \langle S_{\Gamma\cF(x)} 
\ \Gamma f(x),\,\Gamma g(x)\rangle_{\ell^2(\Z^k)}\ dx\ . 
$$ 
This last fact implies that $[S_{E(\cF)}] _x=S_{\Gamma \cF(x)}$ for a.e. $x\in\T^k$; moreover, it also holds that 
$E(\cF)$ is a frame for $\cW$ with frame bounds 
$0<a\leq b$ if and only if $\Gamma\cF(x)$ is a 
frame for $J_\cW(x)$ with frame bounds $0<a\leq b$ for a.e. $x\in \T^k$ (see \cite{Bo}).
\item Since $[P_\cW]_x = P_{J_\cW(x)}$ for a.e. $x\in\T^k$ then $E(\cG)$ is a (classical) dual for $E(\cF)$ in $\cW$ if and only if $\Gamma \cG(x)$ is a (classical) dual for $\Gamma \cF(x)$ in $J_\cW(x)$ for a.e. $x\in\T^k$ (see \cite{Bo,HG07,HG09}).
\end{enumerate}

\subsection{Majorization in probability spaces}\label{2.3}

Majorization between vectors (see \cite{Bhat,MaOl}) has played a key role in frame theory. On the one hand, majorization allows to characterize the existence of frames with prescribed properties (see \cite{AMRS,CFMPS,CasLeo}). On the other hand, majorization is a preorder relation that implies a family of tracial inequalities; this last fact can be used to explain the structure of minimizers of the so-called Benedetto-Fickus frame potential (\cite{BF,CKFT}) as well as more general convex potentials for finite frames (see \cite{MR08,MR10,MRS13,MRS14,MRS13b}). In the next section we extend the notion of convex potentials to the context of Bessel families of translates of finite sequences; therefore, we will need the following general notion of majorization between functions in probability spaces.

\pausa
Throughout this section the triple $(X,\cX,\mu)$ denotes 
a probability space i.e. 
 $\cX$ is a $\sigma$-algebra of sets in $X$ and $\mu$ is a probability measure defined on $\cX$. We shall denote by $L^\infty(X,\mu)^+ 
= \{f\in L^\infty(X,\mu): f\ge 0\}$. 
For $f\in L^\infty(X, \mu)^+$,
the {\it decreasing rearrangement} of $f$ (see \cite{MaOl}), denoted $f^*:[0,1)\rightarrow \R^+$, is given by
\beq\label{eq:reord}
f^*(s ) \igdef\sup \,\{ t\in \R^+ : \ \mu \{x\in X:\ f(x)>t\} >s\} 
\fe s\in [0,1)\, .
\eeq 

\begin{rem} \label{rem:prop rear elem}We mention some elementary facts related with the decreasing rearrangement of functions that we shall need in the sequel. Let $f\in L^\infty(X,\mu)^+$, then: 
\ben
\item $f^*$ is a right-continuous and non-increasing function. 
\item $f$ and $f^*$ are equimeasurable i.e. for every Borel set $A\subset \R$ then $\mu(f^{-1}(A))=|(f^*)^{-1}(A)|$, where $|B|$ denotes the Lebesgue measure of the Lebesgue measurable set $B\subset \R$. In turn, this implies that for every continuous $\varphi:\R^+\rightarrow \R^+$ then: $\varphi\circ f\in L^\infty(X,\mu)$ iff $\varphi\circ f^*\in L^\infty([0,1])$ and in this case $$ \int_X \varphi\circ f\ d\mu =\int_{0}^1 \varphi\circ f^*\ dx\,. $$
\item If $g\in L^\infty(X,\mu)$ is such that $f\leq g$ then $0\leq f^*\leq g^*$; moreover, in case $f^*=g^*$ then $f=g$. 
\item If we consider the probability space $([0,1],\mathcal B,\, dt)$ - Lebesgue measurable sets in [0,1] with Lebesgue measure - then $f^*\in L^\infty([0,1],\, dt)$ is such that $(f^*)^*=f^*$.
\item If $c \in \R$ is such that $f+c\geq 0$ then $(f+c)^*= f^*+c$.\EOE
\een
\end{rem}

\begin{fed}\rm Let $f, g\in L^\infty (X, \mu)^+$ and 
let $f^*,\,g^*$ denote their decreasing rearrangements. We say that $f$ \textit{submajorizes} $g$ (in $(X,\cX,\mu)$), denoted $g \prec_w f$, if
\begin{eqnarray*}\label{eq: mayo de func}
\int_{0}^{s} g^*(t) \,\ dt &\leq& \int_{0}^{s} f^*(t)\,\ dt \fe  0\leq s\leq 1 \,.
\end{eqnarray*}
If we further have that $\int_{0}^{1} g^*(t)\,\ dt = \int_{0}^{1} f^*(t)\,\ dt$ 
 we say that $f$ \textit{majorizes} $g$ and write $g \prec f$. \EOE
\end{fed}
\pausa
In order to check  majorization between functions in probability spaces, we can consider the so-called {\it doubly stochastic maps}. Recall that a linear operator $D$ acting on $L^\infty(X,\mu)$ is a doubly-stochastic map if $D$ is unital, positive and trace preserving i.e. 
$$ 
D(1_X)=1_X \ , \ \ 
D\big(\, L^\infty (X, \mu)^+ \, \big)\inc L^\infty (X, \mu)^+ 
\py 
\int_X D(f)(x)\ d\mu(x) =\int_X f(x)\ d\mu(x)  
$$ 
for every $f\in L^\infty(X,\mu)$. 
It is worth pointing out that $D$ is necessarily a contractive map.

\pausa
Our interest in majorization relies in its relation with integral inequalities in terms of convex functions. The following result summarizes this relation as well as the role of the doubly stochastic maps (see for example \cite{Chong,Ryff}).

\begin{teo}\label{teo porque mayo} \rm
Let $f,\,g\in  L^\infty (X, \mu)^+$. Then the following conditions are equivalent:
\ben
\item $g\prec f$;
\item There is a doubly stochastic map $D$ acting on $L^\infty(X,\mu)$ such that $D(f)=g$;
\item For every convex function $\varphi:\R^+\rightarrow \R^+$ we have that 
\beq\label{eq teo:desi mayo}
 \int_X \varphi(g(x)) \ d\mu(x)\leq  \int_X \varphi(f(x))\ d\mu(x)\ .
\eeq
\een 
In case we only have $g\prec_w f$ then Eq. \eqref{eq teo:desi mayo} holds if we assume further  that $\varphi$ is an increasing convex function. \qed
\end{teo}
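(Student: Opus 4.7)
The plan is to prove the cycle $(2)\Rightarrow(3)\Rightarrow(1)\Rightarrow(2)$, and then verify the final assertion separately. For $(2)\Rightarrow(3)$, I would apply Jensen's inequality for doubly stochastic maps: since $D$ is positive and unital, writing a convex $\varphi:\R^+\to\R^+$ as the supremum of its affine minorants gives the pointwise bound $\varphi(D(f)(x))\leq D(\varphi(f))(x)$ a.e., and integration against $\mu$ combined with the trace-preserving property of $D$ yields $\int_X\varphi(g)\,d\mu\leq \int_X\varphi(f)\,d\mu$. For $(3)\Rightarrow(1)$, I would specialize to convex test functions: using $\varphi_c(t)=(t-c)^+$ for each $c\geq 0$, the layer-cake formula together with Remark \ref{rem:prop rear elem}(2) translates the resulting inequality into $\int_0^s g^*(t)\,dt\leq \int_0^s f^*(t)\,dt$ for every $s\in[0,1]$; to upgrade this submajorization to majorization, I would use $\varphi(t)=t$ together with $\varphi(t)=(K-t)^+$ for some $K$ larger than $\|f\|_\infty$ and $\|g\|_\infty$ to obtain the reverse inequality of total integrals.

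For $(1)\Rightarrow(3)$, by equimeasurability (Remark \ref{rem:prop rear elem}(2)) it suffices to establish $\int_0^1\varphi(g^*)\,dt\leq \int_0^1\varphi(f^*)\,dt$ on $([0,1],dt)$. I would invoke the canonical representation of a convex $\varphi:\R^+\to\R^+$ as
\[
\varphi(t)=\varphi(0)+\alpha\,t+\int_0^\infty (t-c)^+\,d\nu(c),
\]
where $\alpha\in\R$ and $\nu$ is a nonnegative Borel measure on $(0,\infty)$ arising (morally) from the distributional second derivative of $\varphi$. The constant and linear terms contribute equally to both sides because $\int_0^1 g^*=\int_0^1 f^*$ by $g\prec f$; for the remainder, Fubini reduces matters to the pointwise inequality $\int_0^1(g^*-c)^+\,dt\leq \int_0^1(f^*-c)^+\,dt$ for every $c\geq 0$, which follows directly from the definition of majorization via integration by parts on the cumulative integrals of $f^*$ and $g^*$.

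The main obstacle is $(1)\Rightarrow(2)$, where one must build an explicit doubly stochastic map $D$ on $L^\infty(X,\mu)$ with $D(f)=g$. Following Ryff's approach, I would first reduce to the canonical model on $([0,1],dt)$ via a measure-preserving rearrangement transporting $f$ to $f^*$; then, given $g^*\prec f^*$, construct $D$ as a suitable composition of conditional expectations with respect to $\sigma$-algebras generated by level sets of the rearrangements together with further measure-preserving maps, exploiting the continuous Hardy-Littlewood-Polya averaging mechanism. Pulling back via the equimeasurable isomorphism then produces the doubly stochastic map on $(X,\mu)$; the delicate point is the careful handling of atoms and non-separable $\sigma$-algebras, which is the source of the technical difficulty in the measurable setting. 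Finally, for the addendum on submajorization with increasing convex $\varphi$, I would note that when $\varphi$ is additionally increasing one has $\alpha\geq 0$ in the decomposition above, so the linear term can be absorbed by adding a point mass at $c=0$ to $\nu$ (since $t=(t-0)^+$); the argument of $(1)\Rightarrow(3)$ then proceeds relying only on the submajorization inequalities $\int_0^s g^*\leq\int_0^s f^*$, with no equality of total integrals needed.
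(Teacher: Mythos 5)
The paper does not actually prove Theorem \ref{teo porque mayo}: the \verb|\qed| mark appears immediately after the statement, and the surrounding text attributes the result to \cite{Chong,Ryff}, so it is quoted from the literature rather than established in the paper. Your sketch follows the same standard route found in those references. The implications $(2)\Rightarrow(3)$ (pointwise Jensen $\varphi(D(f))\le D(\varphi\circ f)$ via countably many affine minorants, then trace preservation, noting $\varphi\circ f\in L^\infty$ by continuity of $\varphi$), $(3)\Rightarrow(1)$ (Hardy--Littlewood test functions $(t-c)^+$ for submajorization, plus $\varphi(t)=t$ and $\varphi(t)=(K-t)^+$ to pin down equality of integrals), and $(1)\Rightarrow(3)$ (representation $\varphi(t)=\varphi(0)+\alpha t+\int(t-c)^+\,d\nu(c)$ with Fubini) are all sound up to routine verifications, and the submajorization addendum is correctly handled by observing $\alpha\ge 0$ for nondecreasing $\varphi$ so that the linear piece can be absorbed as a mass at $c=0$. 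The one genuinely delicate step is $(1)\Rightarrow(2)$. Your plan of transporting $f$ to $f^*$ by a measure-preserving map $\sigma:X\to[0,1]$ with $f=f^*\circ\sigma$ and pulling a doubly stochastic map back does not go through literally on an arbitrary probability space: if $\mu$ has atoms (already for $X$ a single point) no such $\sigma$ exists, and even absent atoms the $\sigma$-algebra need not be standard, so the transport-and-pullback has to be replaced by a construction carried out directly on $(X,\cX,\mu)$ via conditional expectations and Hardy--Littlewood--P\'olya averaging. This is exactly what \cite{Ryff} does on $[0,1]$ and \cite{Chong} does in general; you flag the difficulty honestly, but as written that leg is a citation rather than a proof. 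Modulo that, your proposal is correct and mirrors what the paper's cited references provide.
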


\begin{exa}\label{exa: integral}
The operator $D$ given by $D(f)=(\int_X f\ d\mu)\cdot 1_X$ is a doubly stochastic map. Hence, we get the majorization relation
$(\int_X f\ d\mu)\cdot 1_X\prec f$. Therefore, if $\varphi:\R^+\rightarrow \R^+$ is any convex function and $f\in  L^\infty (X, \mu)^+$  then, by Theorem \ref{teo porque mayo}, we have that 
\beq \label{ecua jensen}
 \varphi(\int_X f\ d\mu)= \int_X \varphi((\int_X f\ d\mu)\cdot 1_X(x)) \ d\mu(x)\leq \int_X  \varphi(f(x))\ d\mu(x) \ ,
 \eeq
which is an instance of the classical Jensen's inequality. Using the previous facts, notice that if $c\in \R$ is such that $0\leq c\leq \int_X f\ d\mu$ then it is easy to see that $c\cdot 1_X\prec_w f$. \EOE
\end{exa}

\pausa
The following result will play a key role in the study of the structure of minimizers of $\prec_w$ within (appropriate) sets of functions.

\begin{pro}[\cite{Chong}]\label{pro int y reo} \rm
Let $f,\,g\in L^\infty(X,\mu)^+$ such that $g\prec_w f$. If there 
exists a non-decreasing and strictly convex function $\varphi:\R^+\rightarrow \R^+$ such that 
\beq 
\int_X \varphi(f(x))\ d\mu(x) =\int_X \varphi(g(x)) \ d\mu(x) 
\peso{then} g^*=f^* \ . \QEDP
\eeq 
\end{pro}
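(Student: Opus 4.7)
The plan is to pass to decreasing rearrangements on $[0,1]$, apply a pointwise subgradient inequality using strict convexity, and control the resulting linear term via a layer-cake computation exploiting the submajorization. First, by Remark \ref{rem:prop rear elem}(2), I may replace $(X,\mu)$ with $([0,1], dt)$ and $f, g$ with $f^*, g^*$; the integral equality and the submajorization condition $\int_0^s g^*\,dt \leq \int_0^s f^*\,dt$ transfer unchanged, and the desired conclusion becomes pointwise equality of two non-negative, non-increasing, right-continuous, bounded functions on $[0,1]$.

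Set $m(t) := \varphi_+'(g^*(t))$, where $\varphi_+'$ denotes the right derivative of $\varphi$. Convexity plus the non-decreasing hypothesis makes $\varphi_+'$ non-negative and non-decreasing on $\R^+$, so, combined with the monotonicity of $g^*$, the function $m$ is bounded, non-negative, and non-increasing on $[0,1]$. The subgradient inequality gives
$$ \varphi(f^*(t)) - \varphi(g^*(t)) \geq m(t)\bigl(f^*(t) - g^*(t)\bigr), $$
and strict convexity forces strict inequality whenever $f^*(t) \neq g^*(t)$, since otherwise $\varphi$ would be affine on the non-degenerate segment joining $g^*(t)$ and $f^*(t)$. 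Integrating and using $\int_0^1 \varphi(f^*)\,dt = \int_0^1 \varphi(g^*)\,dt$ yields $\int_0^1 m\,(f^*-g^*)\,dt \leq 0$. For the reverse inequality I invoke a layer-cake computation: since $m$ is non-negative and non-increasing, each super-level set $\{m > u\}$ is an interval $[0, s(u))$, and writing $m(t) = \int_0^\infty \chi_{\{m>u\}}(t)\,du$ and applying Fubini gives
$$ \int_0^1 m(t)\bigl(f^*(t) - g^*(t)\bigr)\,dt \;=\; \int_0^\infty \Bigl(\int_0^{s(u)} (f^* - g^*)\,dt\Bigr)\,du, $$
which is non-negative by the submajorization hypothesis. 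Hence $\int_0^1 m\,(f^*-g^*)\,dt = 0$, forcing the non-negative integrand $\varphi(f^*) - \varphi(g^*) - m\,(f^*-g^*)$ to vanish almost everywhere; the strict version of the subgradient inequality then gives $f^* = g^*$ a.e., and right-continuity (Remark \ref{rem:prop rear elem}(1)) upgrades this to pointwise equality on $[0,1)$.

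The main technical point I would verify carefully is the strictness of the subgradient inequality off the coincidence set; this reduces to the standard fact that a strictly convex function cannot be affine on any non-degenerate interval, combined with the fact that $\varphi_+'(x)$ is the infimum of forward secant slopes at $x$. Everything else — finiteness of $\varphi_+'$ on the (bounded) range of $g^*$, boundedness of $m$, and the Fubini/layer-cake manipulations — is routine since $f, g \in L^\infty$.
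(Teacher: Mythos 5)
Your proof is correct. Note, however, that the paper does not prove this proposition at all: it is quoted from \cite{Chong} and stated without argument (the $\square$ inside the display marks it as cited), so there is no in-paper proof to compare against.

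Your route is the standard one for this kind of rigidity statement: pass to decreasing rearrangements, write the subgradient estimate $\varphi(f^*)-\varphi(g^*)\ge \varphi_+'(g^*)\,(f^*-g^*)$ with strict inequality off the coincidence set, and then show $\int_0^1 \varphi_+'(g^*)\,(f^*-g^*)\,dt\ge 0$ by a layer-cake/Abel-summation argument using that $m=\varphi_+'(g^*)$ is non-negative and non-increasing together with $\int_0^s(f^*-g^*)\,dt\ge 0$. Two points you flagged are indeed the ones to nail down, and both check out: (i) strict convexity makes $\varphi_+'$ strictly increasing (if $\varphi_+'$ were constant on a non-degenerate interval, $\varphi$ would be affine there), and this strict monotonicity gives strictness in the subgradient inequality in both the case $f^*(t)>g^*(t)$ and the case $f^*(t)<g^*(t)$; (ii) in the layer-cake step the super-level set $\{m>u\}$ may be $[0,s(u)]$ rather than $[0,s(u))$, but this is Lebesgue-null and irrelevant, and Fubini is justified because $m$ and $f^*-g^*$ are bounded, so the double integral is absolutely convergent. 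The final upgrade from a.e. equality to pointwise equality on $[0,1)$ via right-continuity is correct. No gaps.
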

\pausa
With the notations of Example \ref{exa: integral} notice that Proposition \ref{pro int y reo} implies (the known fact) that if $\varphi$ is strictly convex and such that equality holds in Jensen's inequality Eq. \eqref{ecua jensen} then $f^*=\int_X f\ d\mu$ and hence $f=\int_X f\ d\mu$.

\section{Convex potentials for sequences of translates in FSI spaces}\label{sec3}

We begin by describing the convex potentials for finite sequences of vectors with respect to a finite dimensional subspace.
We consider the sets 
$$
\convf = \{ 
\varphi:\R^+\rightarrow \R^+\ , \  \varphi   \ \mbox{ is a convex function}  \ \} 
$$  
and $\convfs = \{\varphi\in \convf \ , \  \varphi$ is strictly convex $\}$. 

\pausa
Now, given $\varphi\in \convf$ and a finite dimensional subspace $\cW\subset \hil$, then the {\it convex potential} associated to $(\varphi,\cW)$, denoted by $P_\varphi^\cW$, is defined as follows:
for a finite sequence $\cF=\{f_i\}_{i\in \IN{n}}\in  \cW^n$ with  frame operator $S_\cF\in L(\hil)^+$, 
\beq\label{obs mejr}
P_\varphi^\cW(\cF)= \tr [\varphi(S_\cF)\, P_\cW]
\eeq
where $\varphi(S_\cF)\in L(\hil)^+$ is obtained by functional calculus 
and $\tr(\cdot)$ denotes the usual (semi-finite) trace in $L(\hil)$.
Notice that by construction, $P_\cW\,S_\cF=S_\cF\,P_\cW=S_\cF$: then, it is clear that 
\beq\label{eq defi pot fin seq}
P_\varphi^\cW(\cF)=\sum_{i\in\IN{d}}\varphi(\lambda_i(S_\cF))\, ,
\eeq where $d=\dim \cW$ and $(\lambda_i(S_\cF))_{i\in\IN{d}}\in (\R^+)^d$ denotes the vector of eigenvalues of the positive operator $S_\cF|_{\cW}\in L(\cW)^+$, counting multiplicities and arranged in non-increasing order (we use the convention $\IN{0}=\emptyset$). 
In particular, if $\varphi\in\convf$ is such that $\varphi(0)=0$ we get that $$P_\varphi^\cW(\cF)= 
\tr \, (\varphi(S_\cF))=\tr \, (\varphi(G_\cF)) \, ,$$  
where the $n\times n$ matrix $G_\cF=(\langle f_i,\,f_j\rangle)_{i,j\in\In}$ is the Gramian matrix of the finite sequence $\cF$. That is, if $\varphi(0)=0$, then $P^\cW_\varphi=P_\varphi$ does not depend on $\cW$. For example,
in case $\varphi(x)=x^2$, then $P_\varphi^\cW(\cF)=P_\varphi(\cF)$ coincides with the frame potential: indeed, by Eq. \eqref{rep BFpot} we have that 
\beq\label{eq: BF pot prep}
P^\cW_\varphi(\cF)=P_\varphi(\cF)=\tr(S_\cF^2)=\tr(G_\cF^2)
=\sum_{i,\,j\,\in \In}|\api f_i\coma f_j \cpi |\,^2=\FP(\cF)\ .
\eeq
For $\varphi\in \convf$ and a finite dimensional subspace $\cW\subset \hil$, 
$P_\varphi^\cW(\cF)$ is a measure of the spread of the eigenvalues of the frame operator of $\cF=\{f_i\}_{i\in\In}\in\cW^n$. That is, (under suitable normalization hypothesis on $\cF$) the smaller the value $P_\varphi^\cW(\cF)$ is, the more concentrated the eigenvalues of $S_\cF|_\cW\in L(\cW)^+$ are. This is the main motivation for considering these convex potentials (see \cite{MR10,MRS13,MRS13b,Pot}).

\pausa
Next we extend the notion of convex potential to the context of finitely generated shift invariant systems in FSI subspaces.

\begin{fed} \label{defi pot}\rm Let $\cW$ be a FSI subspace in $L^2(\R^k)$, let $\cF=\{f_i\}_{i\in\In}\in\cW^n$ 
be such that $E(\cF)$ is a Bessel sequence and consider $\varphi\in \convf$.
Then the convex potential associated to $(\varphi,\cW)$ on $E(\cF)$, denoted $P_\varphi^\cW(E(\cF))$, is given by
\beq\label{eq defi pot}
P_\varphi^\cW(E(\cF))=\int_{\T^k} 
P_\varphi^{J_\cW(x)}(\Gamma\cF(x)) \ dx\, 
\eeq 
where  
$P_\varphi^{J_\cW(x)}(\Gamma \cF(x))
=\tr(\varphi(S_{\Gamma \cF(x)})\, [P_\cW]_x)$  
is the convex potential associated with $(\varphi,J_\cW(x))$ of the sequence $\Gamma\cF(x)=\{\Gamma \, f_i(x)\}_{i\in\In}$ in $\ell^2(\Z^k)$, for every 
$x\in \T^k$. 
\EOE
\end{fed}

\pausa
Next we develop some notions and tools in order to show that the right hand side in Eq. \eqref{eq defi pot} is well defined, namely that the function $\T^k\ni x\mapsto P_\varphi^{J_\cW(x)}(\Gamma\cF(x)) $ is integrable.

\pausa
Let $\cF=\{f_i\}_{i\in\In}$ be a finite sequence in $L^2(\R^k)$ such that $E(\cF)$
is a Bessel sequence. Recall that in this case $S_{E(\cF)}$ is a SP operator and that for a.e. $x\in\T^k$, $[S_{E(\cF)}] _x= S_{\Gamma\cF(x)}\in L(\ell^2(\Z^k))^+$ is a positive and finite rank operator.

\pausa
The next lemma is a reformulation of a result in \cite{RS95} concerning the existence of measurable functions of eigenvalues and eigenvectors of measurable fields of positive semidefinite $n\times n$ matrices. 
\begin{lem}\label{lem spect represent ese}
Let $\cW$ be a FSI subspace in $L^2(\R^k)$ and let  $\cF=\{f_i\}_{i\in\In}\in\cW^n$ be such that $E(\cF)$
is a Bessel sequence.  Then, there exist: 
\begin{enumerate}
\item a measurable function $r:\T^k\rightarrow \N_{\geq 0}$ and measurable vector fields 
$v_j:\T^k\rightarrow \ell^2(\Z^k)$ for $j\in\In$ such that $r(x)\leq n$ and $\{v_j(x)\}_{j=1}^{r(x)}$ is an orthonormal system in $J_\cW(x)$ for a.e. $x\in\T^k$;
\item\label{lema1} bounded measurable functions $\la_j:\T^k\rightarrow \R^+$ for $j\in\In$, such that 
$\la_1 \ge \ldots \ge \la_n\,$, $\la_j(x)=0$ if $j>r(x)$ and 
\beq\label{lem repre espec S}
[S_{E(\cF)}]_x 
=\sum_{j=1}^{r(x)}\lambda_j(x)\ v_j(x)\otimes v_j(x) 
\ , \quad \text{ for a.e. } x\in\T^k\,.
\eeq
\end{enumerate}
If we assume further that $E(\cF)$ is a frame for $\cW$ then $r(x)=\dim J_{\cW}(x)$ and $\{v_j(x)\}_{j=1}^{r(x)}$ is an orthonormal basis (ONB) for $J_{\cW}(x)$ for a.e. $x\in\T^k$.
\end{lem}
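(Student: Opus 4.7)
The plan is to reduce the measurable spectral problem for the infinite dimensional operators $[S_{E(\cF)}]_x = S_{\Gamma\cF(x)}$ to a measurable spectral problem for the finite $n\times n$ Gramians $G(x) = G_{\Gamma\cF(x)} = (\langle \Gamma f_j(x), \Gamma f_i(x)\rangle)_{i,j \in \In}$, which is the setting directly covered by the measurable diagonalization result of Ron-Shen. First I would observe that the entries of $G(x)$ are measurable scalar functions of $x$, since $x \mapsto \Gamma f_i(x) \in \ell^2(\Z^k)$ is measurable for each $i$, and that $G(x) \in \mathcal{M}_n(\C)^+$ a.e. I then invoke \cite{RS95} to produce measurable functions $\la_1(x) \geq \ldots \geq \la_n(x) \geq 0$ and a measurable field of orthonormal eigenvectors $u_j(x) = (u_{j,i}(x))_{i\in\In} \in \C^n$ of $G(x)$, with $G(x)u_j(x) = \la_j(x) u_j(x)$. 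Define the measurable rank function $r(x) = \#\{j \in \In : \la_j(x) > 0\}$.

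Next I would transport the eigenvectors of $G(x)$ to eigenvectors of $S_{\Gamma \cF(x)} = T_{\Gamma\cF(x)}\,T_{\Gamma\cF(x)}^*$ via the standard polar decomposition trick. Since $G(x) = T_{\Gamma\cF(x)}^*\, T_{\Gamma\cF(x)}$, for $j \leq r(x)$ I set
\[
v_j(x) \igdef \la_j(x)^{-1/2}\, T_{\Gamma\cF(x)}\, u_j(x) = \la_j(x)^{-1/2} \sum_{i\in\In} u_{j,i}(x)\, \Gamma f_i(x) \ \in \ \ell^2(\Z^k)\,,
\]
and $v_j(x) = 0$ otherwise. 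A direct computation using $T_{\Gamma\cF(x)}^*T_{\Gamma\cF(x)} u_j = \la_j u_j$ shows that $\langle v_j(x), v_k(x)\rangle = \delta_{jk}$ for $j,k \leq r(x)$, and that $S_{\Gamma\cF(x)}\, v_j(x) = \la_j(x)\, v_j(x)$. Since each $\Gamma f_i(x) \in J_\cW(x)$ a.e.\ (as $\cF \in \cW^n$ and by Eq. \eqref{eq Jv}), all $v_j(x)$ belong to $J_\cW(x)$. The eigenvectors corresponding to positive eigenvalues exhaust $R(S_{\Gamma\cF(x)})$, so the spectral representation \eqref{lem repre espec S} follows. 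Measurability of $v_j$ is automatic because it is, on the set $\{\la_j > 0\}$, a finite linear combination of the measurable fields $\Gamma f_i(\cdot)$ with measurable coefficients $\la_j^{-1/2}\, u_{j,i}$.

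Finally, for the additional statement assuming $E(\cF)$ is a frame for $\cW$: by the characterization recalled in item (1) of Section \ref{SI cosas}, this is equivalent to $\Gamma \cF(x)$ being a frame for $J_\cW(x)$ a.e., which in turn is equivalent to $S_{\Gamma\cF(x)}$ being an invertible positive operator on $J_\cW(x)$ a.e. Hence $r(x) = \rk\, S_{\Gamma\cF(x)} = \dim J_\cW(x)$, and since the orthonormal set $\{v_j(x)\}_{j=1}^{r(x)} \subset J_\cW(x)$ has the right cardinality, it is an ONB of $J_\cW(x)$.

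The main obstacle is the measurable selection of eigenvalues and eigenvectors for a measurable field of Hermitian matrices, which is delicate because eigenvectors are not canonically determined when eigenvalues collide; this technical content is exactly what is extracted from \cite{RS95}, and the rest of the argument amounts to pushing the finite-dimensional data through $T_{\Gamma\cF(x)}$.
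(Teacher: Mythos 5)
Your proposal is correct and follows essentially the same route as the paper: reduce to the measurable Gramian field $G(x)$, invoke the Ron--Shen measurable diagonalization, and transport the eigenvectors through $T_{\Gamma\cF(x)}$ (your explicit formula $v_j=\la_j^{-1/2}T_{\Gamma\cF(x)}u_j$ is exactly the paper's partial isometry $U(x)=T_{\Gamma\cF(x)}(G^{1/2}(x))^\dagger$ applied to $u_j$). The only cosmetic difference is that the paper first partitions $\T^k$ into sets where the rank is constant, whereas you handle the varying rank directly by setting $v_j=0$ off $\{\la_j>0\}$; both are fine.
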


\begin{proof}
Consider the measurable field of positive semidefinite matrices $G:\T^k\rightarrow \matn^+$ given by the Gramian 
$G(x)=(\langle \Gamma f_i(x),\Gamma f_j(x)\rangle)_{i,j\in\In}$, for $x\in \T^k$. Notice that $G(x)$ is the matrix
representation of $T_{\Gamma\cF(x)}^*T_{\Gamma\cF(x)}\in L(\C^n)^+$ with respect to the canonical basis of $\C^n$ for $x\in \T^k$. In particular, if $b$ denotes a Bessel (upper) bound of $E(\cF)$ then 
$$ \text{ ess sup}_{x\in\T^k} \|G(x)\|=\text{ ess sup}_{x\in\T^k} \| T_{\Gamma\cF(x)}T_{\Gamma\cF(x)}^*\|=\|S_{E(\cF)}\|\leq b\ ,$$ by the remarks at the end of Section \ref{SI cosas}. We set $r(x)=\rk(G(x))=\rk(S_{\Gamma \cF(x)})$ for $x\in\T^k$; therefore $r(\cdot):\T^k\rightarrow \N_{\geq 0}$ is a measurable function such that $r(x)\leq n$ for $x\in\T^k$. Hence, by considering a convenient finite partition of $\T^k$ into measurable sets we can assume, without loss of generality, that $r(x)=r\in \N$ for a.e. $x\in \T^k\,$. 

\pausa
Using results from \cite{RS95}, we see that there exist measurable functions $\la_j:\T^k\rightarrow \R^+$  
and measurable vector fields $u_j:\T^k\rightarrow \C^n$, for $j\in\In$, 
 such that: $\la_j(x)\geq \la_{j+1}(x)$ for $j\in\IN{n-1}$, $\{u_j(x)\}_{j\in\In}$ is an ONB of $\C^n$ and $G(x)u_j(x)=\la_j(x)\,u_j(x)$ for $j\in\In$ and a.e. $x\in \T^k$. 
In particular, the functions $\la_j:\T^k\rightarrow \R^+$ satisfy $0\leq \la_j(x)\leq \|G(x)\|\leq b$ for a.e. $x\in\T^k$, $j\in\In$; these remarks prove item \ref{lema1} above.

\pausa
Take the polar decomposition $T_{\Gamma\cF(x)}=U(x)\,|T_{\Gamma\cF(x)}|$, where $U(x):\C^n\rightarrow J_{\cW}(x)\subset \ell^2(\Z^k)$ is (the unique) partial isometry with $\ker U(x)=\ker T_{\Gamma\cF(x)}$ for a.e. $x\in \T^k$.  Hence, in this case $U(x)=T_{\Gamma\cF(x)}\,(G^{1/2}(x))^\dagger$ and therefore $U(\cdot):\T^k\rightarrow L(\C^n,\ell^2(\Z^k))$ is a well defined measurable field of partial isometries.
Then, $v_j:\T^k\rightarrow \ell^2(\Z^k)$ given by $v_j(x)=U(x)\,u_j(x)\in J_{\cW}(x)$ for $j\in\IN{n}$ and $x\in\T^k$ are measurable vector fields 
such that $\{v_j(x)\}_{j\in\IN{r}}$ is an orthonormal system in $J_{\cW}(x)$, for a.e. $x\in\T^k$; moreover, $[S_{E(\cF)}]_x \,v_j(x)=\la_j(x)
\,v_j(x)$ for $j\in\IN{r}$ and a.e. $x\in\T^k$. 
Since $\rk[S_{E(\cF)}]_x=r$ for a.e. $x\in\T^k$, then we see that Eq. \eqref{lem repre espec S} holds in this case.

\pausa
Finally, notice that if $E(\cF)$ is a frame for $\cW$ then we should have that $r=\rk [S_{E(\cF)}]_x=\dim J_{\cW}(x)$ for a.e. $x\in\T^k$ which shows the last part of the statement.
\end{proof}

\begin{rem}\label{fine la} \rm
Let $\cF=\{f_i\}_{i\in\In}$ be a finite sequence in $L^2(\R^k)$ such that $E(\cF)$ 
is a Bessel sequence. By Lemma \ref{lem spect represent ese} there exist measurable 
vectors fields $v_j:\T^k\rightarrow \ell^2(\Z^k)$ and measurable functions $\la_j:\T^k\rightarrow \R^+$ such that
they verify Eq. \eqref{lem repre espec S}. In what follows we consider the {\it fine spectral structure} of 
$E(\cF)$ that is the weakly measurable function
\beq\label{el lambda}
%\la(S_{\Gamma \cF(x)}) \igdef 
\T^k\ni x\mapsto (\lambda_j([S_{E(\cF)}]_x)\,)_{j\in\N}\in \big(\,\ell^1_+(\Z^k)\,\big)\da \  \ \text{(non-increasing sequences) } ,
\eeq 
where $\lambda_j([S_{E(\cF)}]_x)=\la_j(x)$ for $j\in \I_{r(x)}$ and $\lambda_j([S_{E(\cF)}]_x=0$ for $j\geq r(x)+1$, for $x\in \T^k$.
Hence, $(\lambda_j([S_{E(\cF)}]_x)\,)_{j\in\N}$ coincides with the  
sequence  of eigenvalues of the positive semidefinite finite rank operator 
$[S_{E(\cF)}]_x=S_{\Gamma \cF(x)}\in L(\ell^2(\Z^k))$, counting multiplicities and arranged in non-increasing order, for a.e. $x\in\T^k$.
%Notice that the maps $\la_j(x)$ of Lemma \ref{lem spect represent ese} are uniquely determined (a.e.) by the equation 
%$\la_j(x)=\lambda_j(S_{\Gamma \cF(x)})$,  for $j\in\In\,$. The last entries are 
%$\lambda_j(S_{\Gamma \cF(x)})=0$ for $j\geq n+1$ and $x\in \T^k$. Therefore, another way to state item 2 of 
%Lemma \ref{lem spect represent ese} is to say that the map $\T^k\ni x\mapsto \la(S_{\Gamma \cF(x)})$ of Eq. \eqref {el lambda} 
%is measurable. 
%We refer to this function as the {\it fine spectral structure} of $E(\cF)$. 
\EOE\end{rem}

\begin{rem}\label{ahora chi} 
Consider the notations from Definition \ref{defi pot}. We now show that the right hand side in Eq. \eqref{eq defi pot} is well defined.  
Indeed, by Lemma \ref{lem spect represent ese} we get a spectral representation of $[S_{E(\cF)}]_{(\cdot)}$ as in Eq. \eqref{lem repre espec S} in terms of the bounded and measurable functions $\la_j(\cdot):\T^k\rightarrow \R^+$, for $j\in\In$. If we consider the bounded and measurable function $d(x)=\dim J_\cW(x)\geq r(x)$ for $x\in\T^k$ then, using Eq. \eqref{eq defi pot fin seq} we see that 
$$ P_\varphi^{J_\cW(x)}(\Gamma\cF(x)) = \sum_{j\in\IN{r(x)}} \varphi(\la_j(x))+(d(x)-r(x))\ \varphi(0)\quad \text{for a.e. } x\in\T^k\ .$$
Hence, the non-negative function $$ \T^k\ni x\mapsto P_\varphi^{J_\cW(x)}(\Gamma\cF(x))$$ is bounded and measurable and therefore integrable on $\T^k$. This shows that the convex potential $P_\varphi^\cW(E(\cF))$ is a well defined non-negative real number. \EOE
\end{rem}

\pausa
Incidentally, Remark \ref{ahora chi} above shows that if $\varphi(0)=0$ then the convex potential $P^\cW_\varphi=P_\varphi$ does not depend on the FSI subspace $\cW$.

\begin{exa}
Let $\cW$ be a FSI subspace of $L^2(\R^k)$ and let $\cF=\{f_i\}_{i\in\In}\in\cW^n$. If we set $\varphi(x)=x^2$ for $x\in \R^+$ then, the corresponding potential on $E(\cF)$, that we shall denote $\FP(E(\cF))$, is given by
$$ 
\FP(E(\cF))= \int_{\T^k}
\tr( S_{\Gamma \cF(x)}^2) \ dx= \int_{\T^k} \ \sum_{i,\,j\in \In} |\langle \Gamma f_i(x),\Gamma f_j(x)\rangle|^2 \ dx\ .$$
Hence,   $\FP(E(\cF))$ is a natural extension of the Benedetto-Fickus frame potential of Eq. \eqref{eq: BF pot prep}.
\EOE
\end{exa}

\begin{rem}\label{local trace}
Let $\cW$ be a SI subspace of $L^2(\R^k)$ and let $A\in L(\ell^2(\Z^k))^+$ be a positive operator: 
in \cite{Dutk}, E. Dutkay introduces the local trace function of $A$ relative to $\cW$, denoted $\tau_{\cW,\,A}:\T^k\rightarrow [0,\infty]$ as follows:  for $x\in\T^k$, $$
\tau_{\cW,\,A}(x)=\tr(A \, [P_\cW]_x)\ ,  
$$
where $\tr(\cdot)$ denotes the usual (semi-finite) trace in $L(\ell^2(\Z^k))$.
We can extend the notion of local trace function as described above to the following setting: given $T\in L(L^2(\R^k))^+$ a positive and SP operator, we let the local trace function of $T$ with respect to the SI subspace $\cW$ be given by 
\beq\label{def: loc tr} \tau_{\cW,\,T}(x)
=\tr([ T]_x \, [P_\cW]_x)\ ,\quad x\in\T^k\, . 
\eeq Notice that if $A\in L(\ell^2(\Z^k))^+$ and $T\in L(L^2(\R^k))^+$ is the unique positive and SP operator such that $[ T]_x=A$ for $x\in\T^k$ then 
$$\tau_{\cW,\,A}(x)= \tau_{\cW,\,T}(x) \ , \quad x\in \T^k\, .$$
If we assume further that  $\cW$ is a FSI subspace, we consider $\varphi\in\convf$ and take $\cF=\{f_i\}_{i\in\In}\in\cW^n$ then
$$
P_\varphi^\cW(E(\cF))=\int_{\T^k}
\tau_{\cW,\varphi(S_{E(\cF)})}(x)\ dx\, , $$
where $\varphi(S_{E(\cF)})\in L(L^2(\R^k))^+$ is obtained by the functional calculus. Indeed, notice that in this case $\varphi(S_{E(\cF)})$ is a SP operator such that 
\beq
[\varphi(S_{E(\cF)})]_x 
=\varphi( \,[S_{E(\cF)}]_x)=\varphi(S_{\Gamma \cF(x)})\ , \quad \text{ for a.e. } x\in\T^k\  . \EOEP \eeq
\end{rem}

\pausa
Let $\cW$ be a FSI subspace. In what follows we show that, under some natural restrictions, the convex potentials $P_\varphi^\cW(E(\cF))$ for finite sequences $\cF\in\cW^n$ detect tight frames for $\cW$ as their minimizers (see Theorem \ref{pro min del PF} below).  
In turn, this last fact motivates the study of the structure of minimizers of convex potentials for finitely generated sequences in $L^2(\R^k)$ (under some restrictions) since these minimizers can be considered as natural substitutes of tight  frames. In order to state the results on this matter, we introduce the following notions and notations.

\begin{rem}\label{rem const de esp}
Let $(X, \mathcal X, \mu_X), (Y, \mathcal Y, \mu_Y)$ be two measure spaces; we consider their direct sum, denoted $X\bigoplus Y$, given by the three-tuple $(X\oplus Y, \mathcal X \bigoplus \mathcal Y, \mu_X \oplus \mu_Y)$, where
\ben
\item $X\oplus Y = X \stackrel{d}{\cup}Y$ (the disjoint union of the sets); we further consider the canonical inclusions $\eta_X:X\rightarrow X\oplus Y$ and $\eta_Y:Y\rightarrow X\oplus Y$ of $X$ and $Y$ into their disjoint union; hence $\eta_X$ and $\eta_Y$ are injective functions such that $\eta_X(X)\cap\eta_Y(Y)=\emptyset$ and $\eta_X(X)\cup\eta_Y(Y)=X\oplus Y$.
\item $\mathcal X \bigoplus \mathcal Y =\{A\oplus B=\eta_X(A)\cup \eta_Y(B): A\in \mathcal X, \, B\in \mathcal Y\}$;
\item $\mu_X \oplus \mu_Y$ is the measure given by $\mu_X \oplus \mu_Y(A\oplus B)= \mu_X (A)+ \mu_Y(B)$;
\een
 Notice that using the maps $\eta_X$ and $\eta_Y$ we can consider (as we sometimes do) $X,\, Y\subset X\oplus Y$ .
\EOE
\end{rem}

\begin{nota}\label{nota impor} In what follows we consider:
\ben
\item  A FSI subspace of $L^2(\R^k)$ of length $\ell$, denoted $\cW$;
\item $\cF=\{f_i\}_{i\in \In} \in \cW^n$ such that $E(\cF)$ is a Bessel sequence;
\item $d(x)=\dim J_\cW(x)\leq \ell$, for $x \in \T^k$;
\item The Lebesgue measure on $\R^k$; denoted $|\cdot|$ ; $X_i=d^{-1}(i)\subseteq \T^k$ and $p_i=|X_i|$, $i\in\IN{\ell}\,$.
\item We denote by $\const = \sum_{i\in\IN{\ell}} i\cdot p_i\,$.
\item The spectrum of $\cW$ is the measurable set $\text{Spec}(\cW) =
\bigcup_{i\in\IN{\ell}} X_i = \{x\in \T^k: d(x)\neq 0\}$. 
\EOE \een 
\end{nota}

\begin{teo}[Structure of $P_\varphi^\cW$ minimizers with norm restrictions]\label{pro min del PF}
Consider the Notations \ref{nota impor} and assume that 
$\sum_{i\in\In}\|f_i\|^2=1$. If $\varphi\in \convf$, then 
\beq\label{eq min de PF}
P_\varphi^\cW(E(\cF))\geq 
\const\ \varphi ( \const^{-1})  \, . 
\eeq
Moreover, if $\varphi\in \convfs$ then equality holds in \eqref{eq min de PF} iff $E(\cF)$ is a tight frame for $\cW$ i.e.
\beq\label{eq el min de PF}
S_{E(\cF)} = \const^{-1} \ P_{\cW}\, .
\eeq
\end{teo}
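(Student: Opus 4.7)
The plan is to recast both sides of \eqref{eq min de PF} as integrals over a single probability space and then invoke Jensen's inequality (which is exactly the content of Example \ref{exa: integral}). To this end I build the measure space
$Z=\bigsqcup_{i=1}^{\ell} X_i\times \IN{i}$,
equipping each piece $X_i\times \IN{i}$ with Lebesgue measure on $X_i$ times counting measure on $\IN{i}$. Its total mass is $\sum_{i\in\IN{\ell}} i\cdot p_i=\const$, so the rescaled measure $\mu'=\const^{-1}\mu$ turns $Z$ into a probability space. Using Lemma \ref{lem spect represent ese}, I define the bounded measurable function $F:Z\to \R^+$ by $F(x,j)=\lambda_j([S_{E(\cF)}]_x)$.

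Two identities then need to be verified:
\begin{align*}
\int_Z \varphi\circ F\, d\mu &= \int_{\T^k}\sum_{j=1}^{d(x)}\varphi(\lambda_j([S_{E(\cF)}]_x))\,dx \;=\; P_\varphi^\cW(E(\cF))\,,\\
\int_Z F\, d\mu &= \int_{\T^k}\tr([S_{E(\cF)}]_x)\, dx \;=\; \sum_{i\in\In}\|f_i\|^2\;=\;1\,.
\end{align*}
The first is immediate from Eq. \eqref{eq defi pot fin seq} together with the fact that $[S_{E(\cF)}]_x$ has $r(x)\leq d(x)$ (strictly) positive eigenvalues on $J_\cW(x)$ completed by $d(x)-r(x)$ zero eigenvalues; the second uses $[S_{E(\cF)}]_x=T_{\Gamma\cF(x)}T_{\Gamma\cF(x)}^*$, the cyclicity of the trace, and the Parseval identity $\|f_i\|^2=\int_{\T^k}\|\Gamma f_i(x)\|^2\,dx$ arising from $\Gamma$ being an isometry. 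With these in hand, the lower bound follows from Example \ref{exa: integral} applied to $(Z,\mu')$: the constant $\const^{-1}=\int_Z F\, d\mu'$ is majorized by $F$, so
$\const^{-1}P_\varphi^\cW(E(\cF))=\int_Z\varphi\circ F\,d\mu'\;\geq\; \varphi(\const^{-1})$, which is \eqref{eq min de PF}.

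For the equality case with $\varphi\in\convfs$, I would apply Proposition \ref{pro int y reo} (or the remark that follows it) to $(Z,\mu')$: equality in Jensen's inequality with a strictly convex $\varphi$ forces $F\equiv\const^{-1}$ $\mu$-a.e.\ on $Z$. In particular, every eigenvalue $\lambda_j([S_{E(\cF)}]_x)$ with $j\leq d(x)$ equals the strictly positive value $\const^{-1}$, which precludes any zero eigenvalue in $J_\cW(x)$; hence $r(x)=d(x)$ a.e.\ on $\text{Spec}(\cW)$, and by Lemma \ref{lem spect represent ese} the family $E(\cF)$ is actually a frame for $\cW$. The spectral decomposition \eqref{lem repre espec S} then collapses to $[S_{E(\cF)}]_x=\const^{-1}\,[P_\cW]_x$ a.e., which gives \eqref{eq el min de PF}; the converse is an immediate computation (if $S_{E(\cF)}=\const^{-1}P_\cW$, then $P_\varphi^{J_\cW(x)}(\Gamma\cF(x))=d(x)\varphi(\const^{-1})$, whose integral is $\const\,\varphi(\const^{-1})$). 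The only genuinely delicate step is this last one: leveraging the strict positivity of the common eigenvalue $\const^{-1}$ to rule out the Bessel-but-not-frame scenario and pass from pointwise equality of eigenvalues to the operator identity $S_{E(\cF)}=\const^{-1}P_\cW$. Everything else is bookkeeping plus Jensen.
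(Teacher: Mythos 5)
Your proposal is correct and follows essentially the same route as the paper's proof: both build the fibered measure space over $\T^k$ (your $Z=\bigsqcup_i X_i\times\IN{i}$ is the paper's direct-sum space $X=\bigoplus_i\bigoplus_{j\le i} X_{ij}$), rewrite $P_\varphi^\cW(E(\cF))$ and the trace normalization $\sum_i\|f_i\|^2=1$ as integrals of $\varphi\circ F$ and $F$ against the normalized measure, and apply the majorization $\const^{-1}\cdot 1\prec F$ from Example \ref{exa: integral} together with Theorem \ref{teo porque mayo} for the inequality and Proposition \ref{pro int y reo} for the equality case. Your explicit observation that $F\equiv\const^{-1}>0$ forces $r(x)=d(x)$ a.e.\ (ruling out zero eigenvalues and hence the Bessel-but-not-frame case) spells out a step the paper leaves implicit, but it is not a different argument.
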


\begin{proof} Let $(X_{ij}, \cX_{ij},\, |\cdot|_{ij})$ where $X_{ij}=X_i\coma
\cX_{ij}=\cX_i$ the $\sigma$-algebra of Lebesgue measurable sets in $X_i$ and $|\cdot|_{ij}=|\cdot|_i$ the Lebesgue measure in $X_i$, for $j\in \I_i$ and $i\in \IN{\ell}$.
 With the notations in Remark \ref{rem const de esp}, we consider the measure space 
$$
(X, \mathcal X, \mu)= \bigoplus_{i\in \IN{\ell}} \bigoplus_{j\in \I_i}(X_{ij}, \mathcal X_{ij}, |\cdot|_{ij}) \ .
$$
For $i\in\IN{\ell}$ and $j\in\IN{i}$ we further consider the canonical inclusions $\eta_{i,j}:X_{i,j}\rightarrow X$.
Hence, for every $x\in X$ there exists unique $i\in\IN{\ell}$, $j\in\IN{i}$ and $\tilde x\in X_{i,j}=X_i$ such that $\eta_{i,j}(\tilde x)=x$.
Notice that by construction, $\mu(X)=\sum_{i\in\IN{\ell}} i\cdot p_i= \const\,$. 

\pausa
Let $\la_{E(\cF)}: X\to \R^+$ be the measurable function of eigenvalues of $E(\cF)$ defined as follows: 
for $x\in X$, let $(i\coma j)\in  \I_\ell\times \I_i\,$ and $\tilde x\in X_{i,j}=X_i$ be (uniquely determined) 
such that $\eta_{i,j}(\tilde x)=x$; in this case we set
$$
\la_{E(\cF)}(x)=\la_j (\, [S_{E(\cF)}]_{\tilde x})=\la_j(S_{\Gamma\cF(\tilde x)})\ ,
$$ 
where $\T^k\ni x\mapsto (\lambda_j([S_{E(\cF)}]_x)\,)_{j\in\N}\in \big(\,\ell^1_+(\Z^k)\,\big)\da $
%$\la(S_{\Gamma\cF(\tilde x)})\in \big(\,\ell^1_+(\Z^k)\,\big)\da $ 
is the fine spectral structure of $E(\cF)$ defined in Remark \ref{fine la}.
%Eq. \eqref{el lambda}.
We claim that if $\varphi \in \convf$, then \beq\label{eq ident int}
P_\varphi^\cW (E(\cF))=\int_{X} \varphi(\la_{E(\cF)}(x))\  d\mu(x)\,.
\eeq
Indeed, for Eq. \eqref{eq defi pot} 
$$
P_\varphi^\cW(E(\cF))= 
\int_{\T^k} P_\varphi^{J_\cW(x)}(\Gamma\cF(x)) \ dx
=\int_{\text{Spec}(\cW)} P_\varphi^{J_\cW(x)}(\Gamma \cF(x))\ dx \ ,
$$ 
where $P_\varphi^{J_\cW(x)}(\Gamma \cF(x))$ is the convex potential associated with $(\varphi,J_\cW(x))$ of the finite sequence $\Gamma\cF(x)=\{\Gamma \, f_i(x)\}_{i\in\In}$ in $\ell^2(\Z^k)$ as defined in Eq. 
\eqref{eq defi pot fin seq} (notice that  $P_\varphi^{J_\cW(x)}(\Gamma \cF(x))=0$ for $x\in\T^k\setminus \text{Spec}(\cW)$). Therefore, if $x\in X_i$ for some $i\in \IN{\ell}$ then
$$P_\varphi^{J_\cW(x)}(\Gamma \cF(x))=
\sum_{j=1}^i \varphi(\la_j(S_{\Gamma \cF(x)}))\,.$$
For $i\in\IN{\ell}$ we have that
$$
\int_{X_i} P_\varphi^{J_\cW(x)}(\Gamma \cF(x))\, dx = \int_{X_i} \ 
\sum_{j=1}^i \varphi(\la_j(S_{\Gamma \cF(x)}))
\, dx= \int_{\oplus_{j=1}^i X_{ij}} \varphi(\la_{E(\cF)}(x))\, d\mu(x) \ .
$$
Therefore, since Spec$(\cW)=\bigcup\limits_{i\in\IN{\ell}}X_i\,$ and $X=\oplus_{i\in\IN{\ell}}\oplus_{j\in\IN{i}}X_{i,j}\,$,  
$$
P_\varphi^\cW (E(\cF))=\sum_{i\in\IN{\ell}} \,
\int_{X_i} P_\varphi^{J_\cW(x)}(\Gamma \cF(x)) \, dx = \sum_{i\in\IN{\ell}}  \,\int_{\oplus_{j=1}^i X_{ij}} \varphi(\la_{E(\cF)}(x))\, d\mu(x)
= \int_{X} \varphi(\la_{E(\cF)}(x))\, d\mu(x)\, , 
$$ 
which proves Eq. \eqref{eq ident int}. In particular, if we take $\varphi(x)=x$ in Eq. \eqref{eq ident int}
we get that 
$$ 
\int_{X}\la_{E(\cF)}(x) \, d\mu (x)=\int_{\T^k} \tr (S_{\Gamma\cF(x)})\, dx 
= \int_{\T^k}\ \sum_{i\in \In} \|\Gamma f_i(x)\|^2\, dx=\sum_{i\in \In} \|f_i\|^2=1\ . 
$$
Consider the probability measure $\tilde{\mu} = \const\inv \, \mu$. 
Then, as in  Example \ref{exa: integral}, we have that 
\beq\label{eq: rel mayo cons}\int_{X}\la_{E(\cF)}(x)\, d\tilde{\mu}(x)=\const^{-1} \ \implies \ \const^{-1}\cdot 1_X\prec \la_{E(\cF)} 
\quad (\text{ in } (X, \mathcal X, \tilde{\mu})\, )\,.\eeq
If we let $\varphi\in \convf$ then, using the previous facts and Theorem 
\ref{teo porque mayo}, we get that
$$
\barr{rl}
\varphi\left(\const^{-1}\right) = 
 \int_{X}\varphi\left(\const^{-1}\cdot 1_X\right)\ d\tilde{\mu} & 
\stackrel{\ref{teo porque mayo}}{\le}   \int_{X}\varphi(\la_{E(\cF)}(x))\ d\tilde{\mu}(x) \\&\\
&= \const^{-1} \int_{X}\varphi(\la_{E(\cF)}(x))\ d{\mu}(x) 
\stackrel{\eqref{eq ident int}}{=} 
\const\inv \, P_\varphi^\cW(E(\cF)) \ , \earr
$$
which proves Eq. \eqref{eq min de PF}.
If $\varphi\in \convfs$ and also $P_\varphi^\cW(E(\cF))
=\varphi(\const^{-1}) \, \const\,$, using Eq. \eqref{eq ident int}, we get that
$$\int_{X}\varphi(\la_{E(\cF)}(x))\ d{\tilde \mu}(x)=\int_{X}\varphi(\const^{-1})\  d{\tilde \mu}\,.$$
Hence, by Proposition \ref{pro int y reo} and the majorization relation in Eq. \eqref{eq: rel mayo cons},
$$ (\la_{E(\cF)})^*= \const^{-1} \ 1_{[0,1]} \ \implies \ \la_i(\,[S_{E(\cF)}]_x)
= \const^{-1} \ \text{ for } i\in\IN{d(x)} \text{ and  a.e. } x\in\T^k\, .$$
 Therefore,
$S_{E(\cF)}=\const^{-1}\ P_{\cW}$ i.e. $E(\cF)$ is a tight frame for $\cW$. Conversely, notice that if $S_{E(\cF)} = \const^{-1} \ P_{\cW}\,$ then lower bound in Eq. \eqref{eq min de PF} is attained.
\end{proof}

%\begin{rem} \rm
%Notice that Theorem \ref{pro min del PF} above indicates that minimizers of the convex potentials have a nice {\it global} structure. That is, the structure of the minimizers of the convex potentials is not obtained by glueing 
%minimizers of the corresponding (local) convex potential in each fiber.
%\EOE
%\end{rem}

\section{Fine spectral structure of shift generated 
oblique duals in FSI subspaces}\label{sec4}

Throughout this section $\cV,\,\cW\subseteq L^2(\R^k)$ denote FSI subspaces such that $\cV\oplus \cW^\perp=L^2(\R^k)$ and $\cF=\{f_i\}_{i\in\In}\in\cW^n$ denotes a finite sequence such that $E(\cF)$ is a frame for $\cW$.

\pausa
Next we recall some characterizations of the condition $\cS\oplus \cT^\perp=L^2(\R^k)$ for SI 
subspaces and a characterization of shift generated (SG) oblique duals of $E(\cF)$; these results together 
with \cite{BMS14} allow us to obtain the exact value of the aliasing norm corresponding to 
the consistent sampling induced by the FSI subspaces $\cV$ and $\cW$. 
In Section \ref{sec 4.2} we obtain a detailed description of the fine spectral structure (i.e. eigenvalues) 
of the frame operators of SG oblique $\cV$-duals of the (fixed) frame $E(\cF)$ for $\cW$. 
We will apply these results in Section \ref{sec applic}, where we compute SG oblique dual frames with norm restrictions that simultaneously minimize the convex potentials $P_\varphi^\cV$ for all $\varphi\in\convf$.

\subsection{SG oblique duals and aliasing in FSI subspaces}

Following \cite{HG07} (see also \cite{YEldar3,HKK,HG09}) we consider the set of SG $\cV$-duals of $E(\cF)$:
\beq\label{defi: SG dual}
\duv = \cD_\cV^{SG}(E(\cF))=\{E(\cG)\in \cD_\cV(E(\cF))\, :\ \cG=\{g_i\}_{i\in\In}\in \cV^n \}\, .
\eeq 
In case $\cV=\cW$ then we write $\cD^{SG}(\cF)=\cD^{SG}_{\cW}(E(\cF))$ (which is the class of SG duals of type I, in the terminology of \cite{HG07}). Given $E(\cG)\in \duv$ we obtain the following (structured) reconstruction formulas: 
for every $f\in\cW$ and $g\in\cV$, 
$$ 
f=\sum_{(\ell,\,i)\in \Z^k\times \In} \langle f,\,T_\ell \,g_i\rangle \ T_\ell \,f_i \quad \py \quad 
g=\sum_{(\ell,\,i)\in \Z^k\times \In} \langle g,\,T_\ell\, f_i\rangle \ T_\ell \,g_i\, . 
$$

\pausa 
Next we describe some results related with the general assumption for studying oblique duality, namely $\cV\oplus \cW^{\perp}=L^2(\R^k)$, for the FSI subspaces $\cV$ and $\cW$, as well as SG oblique duality.
The next two results can be derived using combinations of results and techniques in \cite{AC09,KLL05,KLL06}.

\begin{lem}\label{lem: gen bow proj ob}
With the previous notations and assumptions,
let $J_\cV$ and $J_\cW$ denote the range functions of the SI subspaces $\cV$ and $\cW$, respectively. 
Then,
\begin{enumerate}
\item $\cW^\perp$ is a SI subspace with range function $J_{\cW^\perp}(x)=[J_{\cW}(x)]^\perp$ for a.e. $x\in \T^k$;
\item If $Q=P_{\cV//\cW^\perp}$ then $Q$ is a shift preserving operator;
\item $J_\cV(x)\oplus J_{\cW}(x)^\perp=\ell^2(\Z^k)$  and 
$[ Q]_x=P_{J_\cV(x)//J_{\cW}(x)^\perp}$ for a.e. $x\in\T^k$.
\item $E(\cG)\in \duv  \iff \Gamma\cG(x)\,\ \text{is}\,\ J_{\cV}(x)-\text{dual of}\,\ \Gamma\cF(x)$, for a.e $x\in \T^k $.
\QED
\end{enumerate}

\end{lem}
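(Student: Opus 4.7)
The plan is to deduce (1)--(4) from the bijective correspondence between SI subspaces and measurable range functions together with the (multiplicative) fiber decomposition $[\,\cdot\,]_x$ for shift preserving (SP) operators recalled in Section \ref{SI cosas}.

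For (1), I would first note that $\cW^\perp$ is SI, since every $T_\ell$ is unitary and commutes with $P_\cW$; hence $P_{\cW^\perp}=I-P_\cW$ is SP and its fibers are $I_{\ell^2(\Z^k)}-P_{J_\cW(x)}=P_{J_\cW(x)^\perp}$ a.e. Since $[P_\cS]_x=P_{J_\cS(x)}$ for every SI subspace $\cS$, the uniqueness in the SI--range function correspondence (Eq. \eqref{pro: V y J}) forces $J_{\cW^\perp}(x)=J_\cW(x)^\perp$ for a.e. $x\in\T^k$. For (2), each $T_\ell$ ($\ell\in\Z^k$) is unitary and preserves both $\cV$ and $\cW^\perp$, so $T_\ell\, Q\, T_{-\ell}$ is an idempotent with range $\cV$ and nullspace $\cW^\perp$; uniqueness of the oblique projection yields $T_\ell Q = Q T_\ell$, as desired.

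The main step is (3), where I would exploit the fiberwise multiplicativity $[ST]_x=[S]_x[T]_x$ (which is an immediate consequence of Eq. \eqref{defi hatS}). The algebraic identities $QP_\cV=P_\cV$, $QP_{\cW^\perp}=0$, $P_\cV Q=Q$ and $P_{\cW^\perp}(I-Q)=I-Q$ translate fiberwise, via (1), to: for a.e. $x$, $[Q]_x$ is the identity on $J_\cV(x)$, vanishes on $J_\cW(x)^\perp$, has range in $J_\cV(x)$, and $I-[Q]_x$ has range in $J_\cW(x)^\perp$. The decomposition $\xi=[Q]_x\xi+(I-[Q]_x)\xi$ then exhibits $\ell^2(\Z^k)=J_\cV(x)+J_\cW(x)^\perp$, while any $\xi\in J_\cV(x)\cap J_\cW(x)^\perp$ is simultaneously fixed and annihilated by $[Q]_x$, hence zero. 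Consequently $J_\cV(x)\oplus J_\cW(x)^\perp=\ell^2(\Z^k)$ and $[Q]_x=P_{J_\cV(x)//J_\cW(x)^\perp}$ a.e.

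Finally, for (4), by the characterization in Section \ref{sec defi oblique duals}, $E(\cG)\in\cD_\cV^{SG}(E(\cF))$ iff $\cG\in\cV^n$ and $T_{E(\cG)}\,T_{E(\cF)}^*=Q$. Applying Eq. \eqref{eq:fourier} on fibers together with (3), this becomes: $\Gamma\cG(x)\subset J_\cV(x)$ a.e. (which by Eq. \eqref{pro: V y J} is exactly $\cG\in\cV^n$) and $T_{\Gamma\cG(x)}\,T_{\Gamma\cF(x)}^*=P_{J_\cV(x)//J_\cW(x)^\perp}$ for a.e. $x$; this is precisely the defining condition for $\Gamma\cG(x)$ to be a $J_\cV(x)$-dual of $\Gamma\cF(x)$ in the finite-dimensional setting. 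The subtle point throughout the argument is the bookkeeping of null sets so that all fiberwise identities hold simultaneously on a common set of full measure; this is handled by the essentially-bounded measurability statements for SP operators recalled after Eq. \eqref{defi hatS}.
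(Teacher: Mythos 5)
Your proof is correct and self-contained, which is noteworthy because the paper does not actually prove this lemma: it only remarks that Lemma \ref{lem: gen bow proj ob} and Proposition \ref{pro: direct sum si} can be derived using results and techniques in \cite{AC09,KLL05,KLL06}, and then leaves the argument to the reader. Your route --- show $P_{\cW^\perp}$ and $Q=P_{\cV//\cW^\perp}$ are shift preserving, then translate the algebraic relations $QP_\cV=P_\cV$, $QP_{\cW^\perp}=0$, $P_\cV Q=Q$, $P_{\cW^\perp}(I-Q)=I-Q$ to the fibers via the multiplicativity of the SP-to-fiber correspondence --- uses only the preliminaries set up in Section \ref{SI cosas} and gives a complete argument. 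Part (3) is handled with appropriate care: the decomposition $\xi=[Q]_x\xi+(I-[Q]_x)\xi$ produces actual equality $J_\cV(x)+J_\cW(x)^\perp=\ell^2(\Z^k)$ rather than mere density, which matters since $J_\cW(x)^\perp$ is infinite-dimensional even though $\cW$ is FSI; and since $[Q]_x$ restricts to the identity on $J_\cV(x)$ and vanishes on $J_\cW(x)^\perp$, the direct sum forces $[Q]_x=P_{J_\cV(x)//J_\cW(x)^\perp}$ a.e., no separate verification of $[Q]_x^2=[Q]_x$ being needed. Parts (1), (2) and (4) are likewise correct; in (4) the reduction of $E(\cG)\in\duv$ to the two conditions $\cG\in\cV^n$ and $T_{E(\cG)}T_{E(\cF)}^*=Q$, followed by fibering through Eq. \eqref{eq:fourier} and part (3), is exactly right. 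The one implicit point worth stating explicitly is that the Bessel property of $E(\cG)$ transfers to $\Gamma\cG(x)$ for a.e. $x$ (as recalled at the end of Section \ref{SI cosas}), so that the fiberwise condition in (4) is indeed the full $J_\cV(x)$-duality in the sense of Section \ref{sec defi oblique duals}.
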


\begin{rem}\label{rem:Dixmier angle} Let $\cS$ and $\cT$ be closed subspaces of $L^2(\R^k)$. In order to characterize when the (algebraic) sum of these subspaces is a closed subspace we recall the Dixmier angle between $\cS$ and $\cT$, denoted by $[\cS,\,\cT]_D\in[0,\pi]$, given by 
\beq\label{def Dixmier angle}
\cos[\cS, \cT]_D=\sup\{|\langle v, \, w\rangle|, v\in \cS_1, w\in \cT_1\}\,,
\eeq where $\cS_1= \{f\in \cS: \|f\|=1\}$ (and similar for $\cT_1$). It is well known (see \cite{Deu})
that $[\cS, \cT]_D>0$ if and only if $\cS\cap\cT=\{0\}$ and $\cS+\cT$ is a closed subspace of $L^2(\R^k)$.

\pausa
Assume further that $\cS\oplus \cT=L^2(\R^k)$ and let $Q=P_{\cS//\cT}$ be the corresponding oblique projection. Then 
(see \cite{Deu}) 
\beq \label{norm Q}
\|Q\|=\frac{1}{\sin[\cS, \cT]_D}\  . \EOEP
\eeq
\end{rem}

\begin{pro}\label{pro: direct sum si}
Let $\cS, \cT \subseteq L^2(\R^k)$ be SI subspaces of $L^2(\R^k)$.
The following statements are equivalent:
\ben
\item  $\cS \oplus \cT^{\perp} = L^2(\R^k)$;
\item $J_{\cS}(x) \oplus J_{\cT}(x) ^{\perp}= \ell^2(\Z^k)$ for a.e. $x\in\T^k$ and $\esssup_{x\in\T^k}\|P_{J_\cS(x) // J_\cT(x)^{\perp}}\|<\infty$;
\item $J_\cS(x)^\perp\cap J_\cT(x)=\{0\}$ and $\essinf_{x\in\T^k}[J_{\cS}(x), J_\cT(x)^{\perp}]_D>0$.
\een
In this case we have that $ [{\cS},\cT^{\perp}]_D= \essinf_{x\in\T^k} [J_{\cS}(x), J_\cT(x)^{\perp}]_D\,.$
\qed\end{pro}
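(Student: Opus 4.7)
The plan is to reduce the three statements to assertions about the measurable range functions via the isomorphism $\Gamma$ and the theory of shift-preserving (SP) operators from Section~\ref{SI cosas}, together with the Dixmier-angle formula \eqref{norm Q}.

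For (1)$\Rightarrow$(2): assuming $\cS\oplus\cT^\perp=L^2(\R^k)$, the oblique projection $Q=P_{\cS//\cT^\perp}$ is a bounded idempotent. Since $\cS$ and $\cT^\perp$ are both SI, $Q$ commutes with every $T_\ell$, so $Q$ is SP (this is the same argument as in Lemma~\ref{lem: gen bow proj ob}). By the theory recalled around Eq.~\eqref{defi hatS}, $[Q]_{(\cdot)}$ is a weakly measurable, essentially bounded field of operators with $\|Q\|=\esssup_{x\in\T^k}\|[Q]_x\|$; and a fiberwise check using the characterization \eqref{pro: V y J} shows $[Q]_x=P_{J_\cS(x)//J_\cT(x)^\perp}$ for a.e.~$x$. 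This simultaneously gives the fiberwise direct sum and the uniform norm bound. Conversely, for (2)$\Rightarrow$(1), I would define the field $x\mapsto P_{J_\cS(x)//J_\cT(x)^\perp}$ (weak measurability follows from writing the oblique projection as a rational expression in $P_{J_\cS(x)}$ and $P_{J_\cT(x)}$, which are weakly measurable by the definition of range function). The essential bound hypothesis then provides a bounded SP operator $Q$ on $L^2(\R^k)$ via the converse part of the SP correspondence; idempotency and the identification of range and kernel can be verified fiberwise via \eqref{pro: V y J}, yielding $\cS\oplus\cT^\perp=L^2(\R^k)$.

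For (2)$\Leftrightarrow$(3), I would apply \eqref{norm Q} at each fiber: whenever $J_\cS(x)\oplus J_\cT(x)^\perp=\ell^2(\Z^k)$,
$$\|P_{J_\cS(x)//J_\cT(x)^\perp}\|=\frac{1}{\sin[J_\cS(x),J_\cT(x)^\perp]_D}.$$
Thus an essential upper bound on the norms is equivalent to an essential positive lower bound on the Dixmier angles. The remaining issue is to swap $\cap$ with $\perp$: from basic Hilbert-space theory $(J_\cS(x)+J_\cT(x)^\perp)^\perp=J_\cS(x)^\perp\cap J_\cT(x)$, so assuming positivity of the Dixmier angle (which already forces $J_\cS(x)\cap J_\cT(x)^\perp=\{0\}$ and closedness of the sum, see Remark~\ref{rem:Dixmier angle}), the condition $J_\cS(x)^\perp\cap J_\cT(x)=\{0\}$ is exactly what promotes the closed sum to all of $\ell^2(\Z^k)$, i.e.\ to a direct sum. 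Conversely, (2) readily implies both parts of (3).

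For the final identity I would combine $\|Q\|=\esssup_{x\in\T^k}\|[Q]_x\|$ with the fiberwise formula above and with \eqref{norm Q} applied to $\cS,\cT^\perp$:
$$\frac{1}{\sin[\cS,\cT^\perp]_D}=\|Q\|=\esssup_{x\in\T^k}\frac{1}{\sin[J_\cS(x),J_\cT(x)^\perp]_D}=\frac{1}{\sin\bigl(\essinf_{x\in\T^k}[J_\cS(x),J_\cT(x)^\perp]_D\bigr)},$$
the last equality because $\sin$ is strictly increasing on $[0,\pi/2]$, which is where all Dixmier angles lie. Injectivity of $\sin$ on $[0,\pi/2]$ gives the stated identity. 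The main technical obstacle will be the measurability and boundedness of the field $x\mapsto P_{J_\cS(x)//J_\cT(x)^\perp}$ in the (2)$\Rightarrow$(1) direction; once this is handled, everything else is a clean fiberwise translation.
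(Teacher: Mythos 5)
The paper does not actually prove this proposition: it is stated with a \qed and the text before it says the result ``can be derived using combinations of results and techniques in \cite{AC09,KLL05,KLL06}''. So there is no in-paper argument to compare against; your job was to supply the missing proof, and the route you chose --- fiberizing the oblique projection $Q=P_{\cS//\cT^\perp}$ through the shift-preserving correspondence of Eq.~\eqref{defi hatS}, then translating norms into Dixmier angles fiberwise via Eq.~\eqref{norm Q} --- is the natural one and is essentially the argument of \cite{KLL06}. The logical skeleton is sound: (1)$\Rightarrow$(2) by checking that $[Q]_x$ is a.e.\ an idempotent fixing $J_\cS(x)$ and annihilating $J_{\cT^\perp}(x)=J_\cT(x)^\perp$ (so that $[Q]_x=P_{J_\cS(x)//J_\cT(x)^\perp}$), (2)$\Leftrightarrow$(3) by the fiberwise identity $\|P_{J_\cS(x)//J_\cT(x)^\perp}\|=1/\sin[J_\cS(x),J_\cT(x)^\perp]_D$ together with $(J_\cS(x)+J_\cT(x)^\perp)^\perp=J_\cS(x)^\perp\cap J_\cT(x)$, and the final identity from $\|Q\|=\esssup_x\|[Q]_x\|$ and monotonicity of $\sin$ on $[0,\pi/2]$ (you are right that Dixmier angles lie there, since their cosine is a supremum of absolute values, even though Remark~\ref{rem:Dixmier angle} writes $[0,\pi]$).

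The one point you should not leave as an aside is the weak measurability of $x\mapsto P_{J_\cS(x)//J_\cT(x)^\perp}$ in (2)$\Rightarrow$(1); ``a rational expression in $P_{J_\cS(x)}$ and $P_{J_\cT(x)}$'' is optimistic, since the standard closed formulas for oblique projections involve inverses or pseudo-inverses of fiber-dependent operators. It is fixable precisely because of the hypothesis $\esssup_x\|P_{J_\cS(x)//J_\cT(x)^\perp}\|<\infty$: this bound is equivalent to a uniform lower bound $\|P_{J_\cT(x)}\xi\|\geq \delta\|\xi\|$ for $\xi\in J_\cS(x)$, so the operator $P_{J_\cT(x)}P_{J_\cS(x)}P_{J_\cT(x)}+P_{J_\cT(x)^\perp}$ is invertible with inverse given by a norm-convergent Neumann series whose terms are weakly measurable fields, and the oblique projection can then be written as a product of weakly measurable, essentially bounded fields. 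With that supplied, and with the fiberwise verification of range and kernel through Eq.~\eqref{pro: V y J} that you indicate, the proof is complete. (Strictly speaking one should also dispose of the degenerate fibers where $J_\cS(x)=\{0\}$, on which the oblique projection is $0$ and the angle formula is vacuous, but this is routine.)
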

\pausa
As an application of the previous results we compute the exact value of the aliasing norm (see \cite{YEldar1,Janssen}) in terms of the relative geometry of the FSI subspaces $\cV$ and $\cW$. Indeed, recall that the aliasing norm corresponding to the consistent sampling $$ f\mapsto \tilde f= P_{\cW//\cV^\perp} f \ , \quad \text{ for } f\in L^2(\R^k)$$ denoted $A(\cV,\,\cW)$, is given by 
\beq \label{def: alias}
A(\cV,\,\cW)=\sup_{e\in \cW^\perp}\frac{\|P_{\cW//\cV^\perp} e\|}{\|e\|}=\|P_{\cW//\cV^\perp} \,P_{\cW^\perp}\|\,.
 \eeq
 The aliasing norm is a measure of the incidence of $\cW^\perp$ in the consistent sampling induced by $P_{\cW//\cV^\perp}$ and it plays a role in applications of oblique duality. 
 \begin{fed} \rm\label{defi apertur}
Let $\cS$, $\cT\subset L^2(\R^k)$ be closed subspaces. We define the aperture between $\cS$ and $\cT$, denoted $[\cS,\cT]^a\in [0,\pi/2]$, as the angle given by
\beq
\cos([\cS,\cT]^a)=\inf_{f\in\cT,\, \|f\|=1 }\|P_\cS f\|\, .
\EOEP
\eeq 
\end{fed}
\begin{rem}\label{rela entre apertur y angulo}
With the notations of Definition \ref{defi apertur}, we point out that the aperture $[\cS,\cT]^a$ coincides with the notion of angle between the subspaces $\cS$ and $\cT$ as defined in \cite{UnAl} (and $\cos([\cS,\cT]^a)$ is also known as the infimum cosine angle from \cite{KLL05}). 
 It is known that the following relation holds (see \cite{KLL05,KLL06}): 
$$\cos([\cS,\cT]_D)^2=1-\cos([\cS,\cT]^a)^2 \ \implies \ [\cS,\cT]^a=\pi/2 - [\cS,\cT^\perp]_D\,.$$
Hence, using the relations above and Proposition \ref{pro: direct sum si} (see also \cite{KLL05}) we get that if 
$\cS$, $\cT\subset L^2(\R^k)$ are SI subspaces such that $\cS\oplus\cT^\perp=L^2(\R^k)$ then 
\beq\label{rel angle SI}
[\cS,\cT]^a=\esssup_{x\in\T^k} [J_\cS(x),J_\cT(x)]^a<\pi/2\, .
\EOEP\eeq
\end{rem}

\pausa
Consider again the notations of Definition \ref{defi apertur} and assume further that $L^2(\R^k)=\cS\oplus\cT^\perp$. Then, using Remarks \ref{rem:Dixmier angle} and \ref{rela entre apertur y angulo} we see that $$\|P_{\cS//\cT^\perp}\|=\frac{1}{\sin[\cS, \cT^\perp]_D}= \frac{1}{\cos[\cS, \cT]^a}\,.$$
From this we obtain the following upper bound for the aliasing norm (see \cite{UnZe}) $$ A(\cS,\,\cT)\leq \|P_{\cS//\cT^\perp}\|=\frac{1}{\cos[\cS, \cT]^a}\,.$$ Notice that this known bound is not sharp; indeed, if we take $\cS=\cT$ then 
$A(\cS,\,\cT)=0$ but $\cos[\cS, \cT]^a=1$.

\pausa
Next we compute the exact value of the aliasing norm.
\begin{pro}\label{pro comp aliasing} With the previous notations and assumptions,
the aliasing norm $A(\cV,\cW)$ corresponding to the FSI oblique pair $(\cV,\cW)$ is given by 
 $$A(\cV,\,\cW)=\tan([\cV,\,\cW]^a)\, . $$ 
\end{pro}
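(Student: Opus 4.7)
The plan is to reduce $A(\cV,\cW) = \|QP_{\cW^\perp}\|$, where $Q := P_{\cW//\cV^\perp}$, to the already-established norm identity $\|P_{\cV//\cW^\perp}\| = 1/\cos[\cV,\cW]^a$ displayed just above the proposition. The bridge between these two quantities is a $2\times 2$ block matrix argument for the idempotent $Q$.

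First, I would write $Q$ in block form with respect to the orthogonal decomposition $L^2(\R^k) = \cW \oplus \cW^\perp$. Since $R(Q) = \cW$ forces the bottom row of blocks to vanish, and $Q|_\cW = I_\cW$ (because $\cW\subseteq R(Q)$ and $Q^2=Q$) forces the top-left block to equal $I_\cW$, one obtains
\[
Q = \begin{pmatrix} I_\cW & B \\ 0 & 0 \end{pmatrix}, \qquad B := Q|_{\cW^\perp}\colon \cW^\perp \to \cW.
\]
A direct multiplication gives $QQ^* = \begin{pmatrix} I_\cW + BB^* & 0 \\ 0 & 0 \end{pmatrix}$, whose norm is $1 + \|B\|^2$. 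The block form of $QP_{\cW^\perp}$ further shows that $\|QP_{\cW^\perp}\| = \|B\|$, and by \eqref{def: alias} this equals $A(\cV,\cW)$. Combining these observations,
\[
\|Q\|^2 = 1 + A(\cV,\cW)^2.
\]

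To finish, I would invoke $P_{\cW//\cV^\perp} = (P_{\cV//\cW^\perp})^*$ (as noted in Section \ref{sec defi oblique duals}), which yields $\|Q\| = \|P_{\cV//\cW^\perp}\| = 1/\cos[\cV,\cW]^a$ by the displayed formula above the proposition. Substituting,
\[
A(\cV,\cW)^2 = \sec^2[\cV,\cW]^a - 1 = \tan^2[\cV,\cW]^a,
\]
and since $[\cV,\cW]^a < \pi/2$ by \eqref{rel angle SI}, taking positive square roots gives $A(\cV,\cW) = \tan[\cV,\cW]^a$.

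I do not foresee any genuine obstacle here; the only thing to track carefully is which oblique projection plays which role when invoking the quoted formula (the displayed identity just above the proposition is stated for $P_{\cS//\cT^\perp}$, so the adjoint relation is what allows us to apply it to $Q$ itself). The block-matrix computation is the decisive conceptual step, since it converts the \emph{a priori} opaque quantity $\|QP_{\cW^\perp}\|^2$ directly into $\|Q\|^2 - 1$, after which the proposition follows from purely trigonometric identities applied to previously-stated formulas.
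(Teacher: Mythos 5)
Your proof is correct, but it follows a genuinely different route from the paper's. The paper proves the identity by fiberization: it invokes Proposition \ref{pro: direct sum si} to get $J_\cV(x)\oplus J_\cW(x)^\perp=\ell^2(\Z^k)$ on almost every fiber, cites the \emph{finite-dimensional} version of the formula from \cite{BMS14} to obtain $A(J_\cV(x),J_\cW(x))=\tan([J_\cV(x),J_\cW(x)]^a)$ pointwise, and then assembles the global statement via $\|P_{\cW//\cV^\perp}P_{\cW^\perp}\|=\esssup_x\|[P_{\cW//\cV^\perp}P_{\cW^\perp}]_x\|$ together with Eq.~\eqref{rel angle SI}. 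You instead argue globally: the block form $Q=\bigl(\begin{smallmatrix} I_\cW & B\\ 0&0\end{smallmatrix}\bigr)$ of the idempotent $Q=P_{\cW//\cV^\perp}$ relative to $\cW\oplus\cW^\perp$ gives the elementary identity $\|Q\|^2=1+\|QP_{\cW^\perp}\|^2=1+A(\cV,\cW)^2$, and the adjoint relation $Q^*=P_{\cV//\cW^\perp}$ lets you plug in the displayed norm formula $\|P_{\cV//\cW^\perp}\|=1/\cos[\cV,\cW]^a$. All the ingredients you use are stated in the paper (the block computation is standard, and the quoted formula is derived from Remarks \ref{rem:Dixmier angle} and \ref{rela entre apertur y angulo} for general closed subspaces), so the argument is sound; the only degenerate case is $\cW=\trivial$, where both sides vanish trivially. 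What your approach buys is significant: since it nowhere uses the shift-invariant structure, it applies verbatim to any pair of closed subspaces $\cS,\cT$ of a Hilbert space with $\cS\oplus\cT^\perp=\hil$, i.e.\ it would settle the Conjecture stated immediately after the proposition, whereas the paper's fiberwise argument is confined to the FSI setting. What the paper's route buys, in exchange, is that the fiberwise identity $A(J_\cV(x),J_\cW(x))=\tan([J_\cV(x),J_\cW(x)]^a)$ is itself recorded along the way, consistent with the paper's general strategy of reading off global properties from the fibers.
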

\proof
 Notice that by assumption  $J_\cV(x)$ and $J_\cW(x)$ are finite dimensional subspaces of $\ell^2(\Z^k)$ and, by Proposition \ref{pro: direct sum si}, we see that $J_\cV(x)^\perp\oplus J_\cW(x)=\ell^2(\Z^k)$, for a.e. $x\in\T^k$. Hence, we can apply the results from \cite{BMS14}, and conclude that 
 $$
 A(J_\cV(x),\, J_\cW(x))=\| P_{J_\cW(x)//J_\cV(x)^\perp} \ P_{J_\cW(x)^\perp}\|= \tan([J_\cV(x),J_\cW(x)]^a) \ , \quad \text{ for a.e. } x\in\T^k\,.
 $$
Therefore, using Remark \ref{rela entre apertur y angulo}, we get that 
\beq
A(\cV,\,\cW)= \|P_{\cW//\cV^\perp} \,P_{\cW^\perp}\|=\esssup_{x\in\T^k}\tan([J_\cV(x),J_\cW(x)]^a)= \tan([\cV,\,\cW]^a)\, . \QEDP \eeq
 
\begin{conj}
 We conjecture that Proposition \ref{pro comp aliasing} holds for the consistent sampling corresponding to an oblique decomposition $\cS\oplus \cT^\perp=\hil$ in an arbitrary Hilbert space $\hil$. By the results from \cite{BMS14} the conjecture holds for finite dimensional $\cS$ and $\cT$.  By Proposition \ref{pro comp aliasing} this conjecture holds for some infinite dimensional subspaces $\cS$ and $\cT$ as well.\EOE
\end{conj}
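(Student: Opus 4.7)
The plan is to prove the conjecture in arbitrary Hilbert space by exploiting the $2\times 2$ block-matrix representation of the oblique projection $Q=P_{\cS//\cT^\perp}$ with respect to the orthogonal splitting $\hil=\cS\oplus\cS^\perp$. Every ingredient I will use — the identity $\|Q\|=1/\sin[\cS,\cT^\perp]_D$ from Remark~\ref{rem:Dixmier angle}, the trigonometric relation $\sin[\cS,\cT^\perp]_D=\cos[\cS,\cT]^a$ from Remark~\ref{rela entre apertur y angulo}, the $C^*$-identity $\|AA^*\|=\|A\|^2$, and the elementary $\|I+T\|=1+\|T\|$ for positive $T$ — holds in an arbitrary Hilbert space, so no dimensional hypothesis will be required.

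Since $Q$ is an idempotent with $\mathrm{range}(Q)=\cS$ and $Q|_\cS=I_\cS$, its block form in $\hil=\cS\oplus\cS^\perp$ must be
$$
Q=\begin{pmatrix} I_\cS & A\\ 0 & 0\end{pmatrix},\qquad A\in L(\cS^\perp,\cS),
$$
with $A$ bounded because $\|Q\|<\infty$ by the closedness of the direct sum $\cS\oplus\cT^\perp=\hil$. A direct computation then yields
$$
Q\,P_{\cS^\perp}=\begin{pmatrix} 0 & A\\ 0 & 0\end{pmatrix}\py QQ^*=\begin{pmatrix} I_\cS+AA^* & 0\\ 0 & 0\end{pmatrix},
$$
so that $A(\cT,\cS)=\|Q\,P_{\cS^\perp}\|=\|A\|$ and, using the two elementary operator facts quoted above, $\|Q\|^2=\|QQ^*\|=\|I+AA^*\|=1+\|AA^*\|=1+\|A\|^2$. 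Thus the key identity $\|Q\|^2 = 1 + \|Q\,P_{\cS^\perp}\|^2$ holds in any Hilbert space. Combining it with $\|Q\|^2=\sec^2([\cS,\cT]^a)$, which is the two remarks rewritten, I conclude $\|Q\,P_{\cS^\perp}\|^2=\tan^2([\cS,\cT]^a)$, and after renaming $\cS=\cW$, $\cT=\cV$, the claimed identity $A(\cV,\cW)=\tan([\cV,\cW]^a)$ follows.

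The only small matter to handle with care is the direction of the aperture on the right-hand side: the proposition is phrased as $\tan([\cV,\cW]^a)$, which in my notation reads $\tan([\cT,\cS]^a)$, while the block-matrix calculation most naturally delivers $\tan([\cS,\cT]^a)$. Parametrizing $\cT$ as the graph of $A^*\in L(\cS,\cS^\perp)$ inside $\cS\oplus\cS^\perp$, a short calculation shows
$$
\cos^2([\cS,\cT]^a)=\inf_{0\ne a\in\cS}\frac{\|a\|^2}{\|a\|^2+\|A^*a\|^2}=\frac{1}{1+\|A\|^2},
$$
and the analogous computation with the roles of $\cS$ and $\cT$ reversed gives the same value $1/(1+\|A\|^2)$ for $\cos^2([\cT,\cS]^a)$. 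Hence under the standing hypothesis $\cS\oplus\cT^\perp=\hil$ the aperture is symmetric and the orientation is immaterial. There is no serious obstacle to this approach: every step is operator-theoretic bookkeeping made possible by the block decomposition together with the general validity of Remarks~\ref{rem:Dixmier angle} and~\ref{rela entre apertur y angulo}.
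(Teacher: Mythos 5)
The statement you were asked to prove is left as an open \emph{conjecture} in the paper, so there is no proof of it to compare against: the authors only record that the identity $A(\cV,\cW)=\tan([\cV,\cW]^a)$ is known in finite dimensions (by \cite{BMS14}) and that Proposition \ref{pro comp aliasing} establishes it for FSI subspaces of $L^2(\R^k)$ by working fiberwise over $\T^k$, applying the finite-dimensional result on each fiber $J_\cV(x)$, $J_\cW(x)$ and taking essential suprema. Your argument is therefore a genuinely different route, and as far as I can check it is correct and actually settles the conjecture in full generality: the block form of $Q=P_{\cS//\cT^\perp}$ relative to $\hil=\cS\oplus\cS^\perp$, the identities $\|Q\,P_{\cS^\perp}\|=\|A\|$ and $\|Q\|^2=\|I+AA^*\|=1+\|A\|^2$, and the graph description $\cT=\{(s,A^*s):s\in\cS\}$ yielding $\cos^2([\cS,\cT]^a)=(1+\|A\|^2)^{-1}$ all hold for arbitrary closed subspaces with $\cS\oplus\cT^\perp=\hil$; moreover this makes the argument self-contained, since the relation $\|Q\|=1/\cos[\cS,\cT]^a$ drops out of your own computation rather than being imported from Remarks \ref{rem:Dixmier angle} and \ref{rela entre apertur y angulo}. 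The one step you should write out rather than gesture at is the symmetry $[\cT,\cS]^a=[\cS,\cT]^a$: computing $\cos[\cT,\cS]^a=\inf_{f\in\cS,\,\|f\|=1}\|P_\cT f\|$ is not literally ``the same computation with the letters swapped,'' because your block decomposition is adapted to $\cS$. Two clean fixes: either use the explicit projection onto the graph of $A^*$ to get $\|P_\cT (s,0)\|^2=\langle (I+AA^*)^{-1}s,s\rangle$, whose infimum over unit $s$ is again $(1+\|A\|^2)^{-1}$; or note that $P_{\cT//\cS^\perp}=Q^*$, run the same block argument in $\hil=\cT\oplus\cT^\perp$ (so $\cS$ is the graph of some $B^*$ with $\|Q^*\|^2=1+\|B\|^2$), and conclude $\|B\|=\|A\|$ from $\|Q^*\|=\|Q\|$. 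With that line added, your proof is complete and upgrades the paper's conjecture to a theorem.
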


\subsection{Fine spectral structure of SG oblique duals}\label{sec 4.2}

\pausa
Let $E(\cG)\in \duv$ and let $S_{E(\cG)}$ denote the frame operator of $E(\cG)$. Recall that in this case $S_{E(\cG)}$ is a shift preserving (SP) operator such that $ [S_{E(\cG)}]_x=S_{\Gamma \cG(x)}$ for a.e. $x\in \T^k$  
and the fine spectral structure of $E(\cG)$ is the function 
%$\T^k\ni x\mapsto \lambda(S_{\Gamma \cG(x)}) = (\lambda_j(S_{\Gamma \cG(x)}))_{j\in\N} \,$, 
$\T^k\ni x\mapsto (\lambda_j([S_{E(\cG)}]_x)\,)_{j\in\N}\in \big(\,\ell^1_+(\Z^k)\,\big)\da$, that consists of 
the sequence of eigenvalues of the positive finite rank operator $[S_{E(\cG)}]_x=S_{\Gamma \cG(x)}$, counting multiplicities and arranged in non-increasing order for a.e. $x\in\T^k$ (see Remark \ref{fine la}). 

\pausa
In the next result we consider the measurable function $d:\T^k\rightarrow \{0,\ldots,n\}$ given by $d(x)=\dim J_\cW(x)= \dim J_\cV(x)$ for a.e. $x\in\T^k$.

\begin{lem}\label{pro factoriz}
Let $\cG=\{g_i\}_{i\in\In}\in\cV^n$ be such that $E(\cG)$ is a frame for $\cV$. Let 
$B\in L(L^2(\R^k))^+$ be a shift preserving operator such that $R(B)\subseteq \cV$. Then, there exists $\cZ=\{z_i\}_{i\in\In}\in\cV^n$ such that $B=S_{E(\cZ)}$ and $T_{E(\cG)}\,T_{E(\cZ)}^*=0$ if and only if $\rk([B]_x )\leq n-d(x)$ for a.e. $x\in \T^k$. 
\end{lem}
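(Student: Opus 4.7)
The strategy is to pass to fibers via $\Gamma$ and the relation in Eq.~\eqref{eq:fourier}: both conditions become fiberwise identities of operators between the finite-dimensional spaces $\C^n$ and $J_\cV(x)$, and the problem reduces to a measurable-selection argument for a pointwise factorization. Throughout, recall that since $E(\cG)$ is a frame for $\cV$, $\Gamma\cG(x)$ is a frame for $J_\cV(x)$ a.e., so $T_{\Gamma\cG(x)}\colon \C^n\to J_\cV(x)$ is surjective with $\ker T_{\Gamma\cG(x)}=\ker G_\cG(x)$ of dimension $n-d(x)$, where $G_\cG(x)=(\langle \Gamma g_j(x),\Gamma g_i(x)\rangle)_{i,j\in\In}$ is the measurable Gramian.

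For necessity, suppose $\cZ\in\cV^n$ is as required. Applying Eq.~\eqref{eq:fourier} to $T_{E(\cG)}T_{E(\cZ)}^*=0$ and to $S_{E(\cZ)}=B$ yields $T_{\Gamma\cG(x)}T_{\Gamma\cZ(x)}^*=0$ and $T_{\Gamma\cZ(x)}T_{\Gamma\cZ(x)}^*=[B]_x$ a.e. The first identity forces $\mathrm{Ran}(T_{\Gamma\cZ(x)}^*)\subseteq \ker T_{\Gamma\cG(x)}$, so $\rk T_{\Gamma\cZ(x)}^*\le n-d(x)$; since $\rk[B]_x=\rk T_{\Gamma\cZ(x)}$ in finite dimensions, this gives $\rk[B]_x\le n-d(x)$ a.e.

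For sufficiency, I would construct $\Gamma\cZ(x)$ pointwise and then verify the pieces fit together measurably. First, following the proof of Lemma~\ref{lem spect represent ese} applied to the positive SP operator $B$ whose fibers $[B]_x$ have uniformly bounded finite rank (because $R(B)\inc\cV$ and $\rk[B]_x\le n-d(x)$), obtain measurable scalar fields $\mu_1\ge\cdots\ge\mu_n\ge 0$ and measurable vector fields $u_1,\ldots,u_n\colon\T^k\to\ell^2(\Z^k)$ with $u_j(x)\in J_\cV(x)$ and orthonormal whenever $\mu_j(x)>0$, giving
$$[B]_x=\sum_{j=1}^{r(x)} \mu_j(x)\,u_j(x)\otimes u_j(x),\qquad r(x)=\rk[B]_x.$$
Second, applying the analogous argument to the measurable field of positive $n\times n$ matrices $G_\cG(x)$ (after partitioning $\T^k$ into the measurable pieces where $d(x)$ is constant) produces a measurable orthonormal basis $w_1(x),\ldots,w_{n-d(x)}(x)$ of $\ker G_\cG(x)\subset\C^n$. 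Since $r(x)\le n-d(x)$, the measurable vector field
$$
\Gamma z_i(x)\;=\;\sum_{j=1}^{r(x)}\sqrt{\mu_j(x)}\,\overline{\langle w_j(x),e_i\rangle}\,u_j(x)\;\in\; J_\cV(x),\qquad i\in\In,
$$
is well defined and uniformly bounded (each $\mu_j(x)\le\|B\|$ and $\|w_j(x)\|=1$), so $\cZ=\{z_i\}_{i\in\In}\in\cV^n$ and $E(\cZ)$ is Bessel. Writing $T_{\Gamma\cZ(x)}=\sum_j\sqrt{\mu_j(x)}\,u_j(x)\otimes w_j(x)$ and using $\langle w_j(x),w_k(x)\rangle=\delta_{jk}$, a direct computation gives $T_{\Gamma\cZ(x)}T_{\Gamma\cZ(x)}^*=[B]_x$ a.e., whence $S_{E(\cZ)}=B$ by uniqueness of the SP operator with prescribed fibers; while $w_j(x)\in\ker T_{\Gamma\cG(x)}$ gives $T_{\Gamma\cG(x)}T_{\Gamma\cZ(x)}^*=0$ a.e., i.e.\ $T_{E(\cG)}T_{E(\cZ)}^*=0$.

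The main obstacle is not algebraic (the pointwise factorization is essentially a singular-value decomposition together with the fact that any positive operator of rank $r$ on a finite-dimensional space admits a factorization $TT^*$ with $\mathrm{Ran}(T^*)$ contained in a prescribed subspace of dimension $\ge r$), but measurable: obtaining the measurable spectral decompositions of $[B]_x$ and of $G_\cG(x)$ with the correct ranges. Both are handled by the measurable-selection machinery of~\cite{RS95} that is already used in Lemma~\ref{lem spect represent ese}, after a preliminary partition of $\T^k$ into measurable sets on which the ranks $r(x)$ and $d(x)$ are constant.
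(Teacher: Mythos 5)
Your proof is correct and follows essentially the same route as the paper: both directions reduce to the fibers via Eq.~\eqref{eq:fourier}, the necessity part is identical, and for sufficiency both arguments measurably diagonalize $[B]_x$ inside $J_\cV(x)$ and force the range of $T_{\Gamma\cZ(x)}^*$ into $\ker T_{\Gamma\cG(x)}=\ker G_\cG(x)$ using the measurable eigenvector machinery of \cite{RS95}. The only cosmetic difference is that you write $T_{\Gamma\cZ(x)}$ directly as a sum of rank-one operators $\sqrt{\mu_j(x)}\,u_j(x)\otimes w_j(x)$, while the paper packages the same data as $[B]_x^{1/2}V(x)$ for an explicit measurable field of partial isometries $V$.
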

\proof
First notice that by considering a convenient finite partition of $\T^k$ into measurable sets we can assume, without loss of generality, that $d(x)=d\in \N$ for a.e. $x\in \T^k\,$. Notice that in this case $n\geq d$.
 By Lemma \ref{lem spect represent ese} there exist measurable vector fields $v_j:\T^k\rightarrow \ell^2(\Z^k)$
for $j\in\In$ such that, if 
%$\lambda(S_{\Gamma \cG(\cdot)})=(\la_j(\cdot))_{j\in\N}$ 
$\T^k\ni x\mapsto (\lambda_j(x)\,)_{j\in\N}\in \big(\,\ell^1_+(\Z^k)\,\big)\da$ denotes the fine spectral structure of $E(\cG)$, then 
\beq\label{repre espec S}
[S_{E(\cG)}]_x 
=\sum_{j\in\IN{d}}\lambda_j(x)\ v_j(x)\otimes v_j(x) 
\ , \quad \text{ for a.e. } x\in\T^k\,.
\eeq
Moreover, in this case $\{v_j(x)\}_{j\in\IN{d}}$ is an ONB of $J_\cV(x)$ for a.e. $x\in\T^k$.
Assume that  $B\in L(L^2(\R^k))^+$ is a shift preserving operator such that $R(B)\subseteq \cV$ and such that $\rk([B]_x )\leq n-d$ for a.e. $x\in \T^k$. Since $[B]_x\in L(\ell^2(\Z^k))^+$ is such that $R([B]_x)\subseteq J_{\cV}(x)$ for a.e. $x\in\T^k$ 
then, using the measurable vector fields $\{v_j\}_{j\in\IN{d}}$ as above (indeed, the measurable field of matrix representations 
of $[B]_x$ with respect to $\{v_j(x)\}_{j\in\IN{d}}$ and the results from \cite{RS95}) we get measurable fields $w_j:\T^k\rightarrow \ell^2(\Z^k)$ for $j\in\IN{d}$, such that 
$\{w_j(x)\}_{j\in\IN{d}}$ is an ONB of $J_\cV(x)$ and $[B]_x\,w_j(x)
=\la_j([B]_x)\, w_j(x)$ for $j\in\IN{d}\,$ and a.e. $x\in\T^k$. 
In particular, we see that 
$$ 
[B]_x^{1/2}=\sum_{j=1}^{\min\{d,n-d\}}\la_j([B]_x)^{1/2}\, w_j(x)\otimes w_j(x)\ , \quad \text{for a.e. } x\in\T^k \ .
$$
Consider the measurable field of positive semidefinite matrices $G_\cG:\T^k\rightarrow \matn$ given by $G_\cG(x)=(\langle \Gamma g_i(x),\,\Gamma g_j(x)\rangle)_{i,\,j\in\I_n}$, $x\in\T^k$. Again by \cite{RS95}, there exist measurable field of vectors $u_j:\T^k\rightarrow \C^n$ for $j\in\I_n$ 
such that for a.e. $x\in\T^k$ we have that $\{u_j(x)\}_{j\in\I_n}$ is an ONB of $\C^n$, $G_\cG(x) \, u_j(x)=\la_j(x)\, u_j(x)$ for $j\in\I_d$ and 
$G_\cG(x) \, u_j(x)=0$ for $d+1\leq j\leq n$ (since $G_\cG(x)$ and $[S_{E(\cG)}]_x$ have the same strictly positive eigenvalues).

\pausa
Let $V:\T^k\rightarrow L(\C^n,\ell^2(\Z^k))$ be the measurable field of partial isometries given by 
$$
V(x) \, u_j(x) =\left\{ 
\begin{array}{ccc}
w_{j-d} & {\rm if} & d+1\leq j\leq d+\min\{d,n-d\}\,,    \\
0 & {\rm } & \text{otherwise}.   
\end{array}
\right.
$$
Hence $V(x)\,V^*(x) $ is the orthogonal projection onto $\text{span}\{w_j(x):\ j\in \IN{\min\{d,n-d\}}\}$ and thus $[B]_x\,V(x)\,V^*(x)=[B]_x$ for a.e. $x\in\T^k$; on the other hand, 
$R(V^*(x)\,) =\ker V(x) \orto \inc  \text{span}\{u_j(x):d+1\le j\le n\} \implies T_{\Gamma\cG(x)}V^*(x)=0$ for a.e. $x\in \T^k$.

\pausa
For $i\in\In$ we set $z_i\in \cV$ determined uniquely by $\Gamma z_i(x)
=[ B]_x^{1/2}\,V(x) e_i$ for a.e. $x\in\T^k$, where $\{e_i\}_{i\in\In}$ denotes the canonical ONB of $\C^n$. If we set $\cZ=\{z_i\}_{i\in\In}$ then $T_{\Gamma \cZ(x)} =[ B^{1/2}]_x\,V(x)$ for a.e. $x\in \T^k$; 
hence, using Eq. \eqref{eq:fourier}, we see that 
$$ 
[T_{E(\cG)}\,T_{E(\cZ)}^*]_x=T_{\Gamma \cG(x)} T_{\Gamma \cZ(x)}^*
= T_{\Gamma \cG(x)}V^*(x) \,[ B^{1/2}]_x=0 \ , \quad \text{ for a.e. } x\in\T^k\,.$$
On the other hand, notice that 
$$
[ S_{E(\cZ)}]_x=S_{\Gamma\cZ(x)}
=[B]^{1/2}_x \, V(x)\, V^*(x)\,[B]^{1/2}_x
=[B]_x\ , \quad \text{ for a.e. } x\in\T^k\ .
$$
Conversely, assume that
there exists $\cZ=\{z_i\}_{i\in\In}\in\cV^n$ such that $B=S_{E(\cZ)}$ and $T_{E(\cG)}\,T_{E(\cZ)}^*=0$. Then, by Eq. \eqref{eq:fourier}, we get that 
$0=T_{\Gamma \cG(x)}\ T_{\Gamma \cZ(x)}^*$ and hence $\rk(T_{\Gamma \cZ(x)}^*)\leq n-\rk(T_{\Gamma \cG(x)})=n-d(x)$
for a.e. $x\in\T^k$. Therefore,
\beq
\rk\, [S_{E(\cZ)}]_x=\rk(S_{\Gamma\cZ(x)})=\rk(T_{\Gamma \cZ(x)})=\rk(T_{\Gamma \cZ(x)}^*)\leq n-d(x) \quad \text{for a.e. } x\in \T^k\ .
\QEDP\eeq

\begin{fed}\label{el conjunto U}\rm
Let $\cG=\{g_i\}_{i\in\In}$ be such that $E(\cG)$ is a frame for $\cV$ with frame operator $\Sast=S_{E(\cG)}$. Recall that $d(x)=\dim J_\cV(x)$ for $x\in\T^k$. Then, we consider 
\beq
U_\cV(E(\cG)\,) = 
\Big\{\Sast + B:B\in L(L^2(\R^k))^+ \text{ is SP},\ R(B)\subset \cV,\
\rk([B]_x)\leq n-d(x) \, , \, \text{for a.e. } x\in \T^k  \, \Big\} . 
\EOEP\eeq
\end{fed}

\begin{pro}\label{pro SD}
Let $E(\cF)^\#_\cV=\{T_\ell \, f^\#_{\cV,\,i}\}_{(\ell,i)\in\Z^k\times \In}$ 
denote the canonical $\cV$-dual of $\cF$. Then,
\beq \label{ident s dual}
\{S_{E(\cG)}:\ E(\cG)\in \duv\} = U_\cV({E(\cF)^\#_\cV}) \ .
\eeq
\end{pro}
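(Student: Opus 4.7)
The plan is to prove both inclusions in \eqref{ident s dual} by using the canonical $\cV$-dual $E(\cF)^\#_\cV$ as a base point and parametrizing every SG $\cV$-dual as a perturbation $E(\cF)^\#_\cV + E(\cH)$ with $\cH \in \cV^n$ satisfying an orthogonality relation. The bridge between the algebraic duality condition and the rank/support condition defining $U_\cV(E(\cF)^\#_\cV)$ will be Lemma \ref{pro factoriz}, applied to the frame $E(\cF)^\#_\cV$ for $\cV$.

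For the inclusion ``$\subseteq$'', I would take $E(\cG)\in\duv$ and set $\cH = \cG - \cF^\#_\cV \in \cV^n$, so that $Z:=T_{E(\cH)} = T_{E(\cG)} - T_{E(\cF)^\#_\cV}$. Both $T_{E(\cG)}$ and $T_{E(\cF)^\#_\cV}$ are $\cV$-duals, so $Z\, T^*_{E(\cF)} = 0$; I will upgrade this (see the key technical point below) to $Z\, T^*_{E(\cF)^\#_\cV} = 0$ and its adjoint. Expanding $S_{E(\cG)} = (T_{E(\cF)^\#_\cV} + Z)(T_{E(\cF)^\#_\cV} + Z)^*$ then collapses to $S_{E(\cF)^\#_\cV} + ZZ^*$. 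Setting $B=ZZ^* = S_{E(\cH)}$, $B$ is automatically positive and SP (as the frame operator of a SI sequence), with $R(B) \inc \cV$. Finally, fiberwise $[B]_x = T_{\Gamma\cH(x)}T^*_{\Gamma\cH(x)}$, and $Z\,T^*_{E(\cF)}=0$ forces $R(T^*_{\Gamma\cF(x)}) \inc \ker T_{\Gamma\cH(x)}$, giving $\rk[B]_x = \rk T_{\Gamma\cH(x)} \leq n - d(x)$ for a.e.\ $x$.

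For the inclusion ``$\supseteq$'', I start with $A + B \in U_\cV(E(\cF)^\#_\cV)$. Since $E(\cF)^\#_\cV$ is a frame for $\cV$, I can apply Lemma \ref{pro factoriz} with $\cG = \cF^\#_\cV$ to produce $\cZ \in \cV^n$ with $B = S_{E(\cZ)}$ and $T_{E(\cF)^\#_\cV}\,T^*_{E(\cZ)} = 0$. I then define $\cG' = \cF^\#_\cV + \cZ \in \cV^n$ and verify that $E(\cG')\in\duv$: using the equivalence of orthogonality conditions noted below, $T_{E(\cZ)}\, T^*_{E(\cF)} = 0$, so $T_{E(\cG')}\, T^*_{E(\cF)} = T_{E(\cF)^\#_\cV}\, T^*_{E(\cF)} = P_{\cV//\cW^\perp}$. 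A direct expansion of $S_{E(\cG')}$ as in the first half yields $S_{E(\cG')} = S_{E(\cF)^\#_\cV} + B = A + B$.

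The main obstacle, and the piece both halves hinge on, is the identity $\ker T_{\Gamma\cF(x)} = \ker T_{\Gamma\cF^\#_\cV(x)}$ in $\C^n$ for a.e.\ $x\in\T^k$. I expect to obtain it from the fiberwise factorization of the canonical $\cV$-dual: using $[P_{\cV//\cW^\perp}]_x = P_{J_\cV(x)//J_\cW(x)^\perp}$ and $[S_\cF^\dagger]_x = S_{\Gamma\cF(x)}^\dagger$, one has
\[
T_{\Gamma\cF^\#_\cV(x)} \;=\; P_{J_\cV(x)//J_\cW(x)^\perp}\, S_{\Gamma\cF(x)}^\dagger\, T_{\Gamma\cF(x)}\,,
\]
so $\ker T_{\Gamma\cF(x)} \inc \ker T_{\Gamma\cF^\#_\cV(x)}$; since both kernels have codimension $d(x)$ (because $E(\cF)$ and $E(\cF)^\#_\cV$ are frames for $\cW$ and $\cV$ respectively, which have the same fiber dimension), equality follows. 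Consequently, for any SI sequence $\cZ \in \cV^n$, the conditions $T_{E(\cZ)}\,T^*_{E(\cF)} = 0$, $T_{E(\cZ)}\,T^*_{E(\cF)^\#_\cV} = 0$, and their adjoints are all equivalent, which is precisely what allows the cross terms in $S_{E(\cG)}$ to vanish and what lets Lemma \ref{pro factoriz} (stated for $\cG = \cF^\#_\cV$) deliver a genuine $\cV$-dual of $E(\cF)$.
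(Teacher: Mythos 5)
Your proposal is correct and follows essentially the same route as the paper's proof: the same decomposition $\cG=\cF^\#_\cV+\cZ$, the same appeal to Lemma \ref{pro factoriz} applied to the frame $E(\cF)^\#_\cV$ for $\cV$, and the same pivotal orthogonality upgrade, which the paper phrases as $R(T_{E(\cF)}^*)=R(T_{E(\cF)^\#_\cV}^*)$ and you phrase equivalently as the a.e.\ fiberwise identity $\ker T_{\Gamma\cF(x)}=\ker T_{\Gamma\cF^\#_\cV(x)}$. The only difference is presentational: you supply a fiberwise factorization and dimension count for that identity, which the paper asserts without proof.
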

\proof
Let $\cG=\{g_i\}_{i\in\In}\in\cV^n$ be such that $E(\cG) \in \duv$. Let $\cZ=\{z_i\}_{i\in\In}\in\cV^n$ be given by 
 $z_i=g_i-f^\#_{\cV,\,i}$ for $i\in\In\, $. 
 Then $E(\cZ)=\{T_\ell \, z_i\}_{(\ell,\,i)\in\Z^k\times \In}$ 
 is a Bessel sequence in $\cV$ such that $T_{E(\cG)}=T_{E(\cF)^\#_{\cV}}+T_{E(\cZ)}$. In this case we have that $T_{E(\cZ)}\,T_{E(\cF)}^*=0$ and therefore $T_{E(\cZ)}\,T_{E(\cF)^\#_{\cV}}^*=0$,
 since $R(T_{E(\cF)}^*)=R(T_{E(\cF)^\#_{\cV}}^*)$. Thus, 
$$ 
S_{E(\cG)}=(T_{E(\cF)^\#_{\cV}}+T_{E(\cZ)})\,(T_{E(\cF)^\#_{\cV}}+T_{E(\cZ)})^* =S_{E(\cF)^\#_{\cV}}+ S_{E(\cZ)}\ . 
$$   
We conclude that $B= S_{E(\cZ)}\in L(L^2(\R^k))^+$ is SP, $R(S_{E(\cZ)})\subset \cV$ and, by Lemma \ref{pro factoriz}, that $\rk\, [S_{E(\cZ)}]_x\leq n-d(x)$ for a.e. $x\in \T^k$. 
 
\pausa
Conversely, if $S\in U_\cV({E(\cF)^\#_\cV})$ then $S=S_{E(\cF)^\#_\cV}+B$, where
$B\in L(L^2(\R^k))^+$ is SP,  $R(B)\subset \cV$ and $\rk([B]_x)\leq n-d(x)$ for a.e. $x\in X$. By Lemma \ref{pro factoriz} we see that there exists $\cZ=\{z_i\}_{i\in\In}$ such that $T_{E(\cZ)}\,T_{E(\cF)}^*=0$ and $B=S_{E(\cZ)}$.
If we let $\cG=\{g_i\}_{i\in\In}$ be given by $g_i=f^\#_{\cV,\,i}+z_i$ for $i\in\In$, then $E(\cG)$ is a Bessel sequence in $\cV$ such that $T_{E(\cG)}=T_{E(\cF)^\#_\cV}+T_{E(\cZ)}$. 
Using that $T_{E(\cZ)}\,T_{E(\cF)^\#_\cV}^*=0$ we conclude, as before, that 
\beq
S_{E(\cG)}=S_{E(\cF)^\#_{\cV}}+ S_{E(\cZ)}=S_{E(\cF)^\#_{\cV}}+ B=S   
\QEDP\eeq

\pausa
Proposition \ref{pro SD} shows that the set of frame operators of SG $\cV$-duals of a fixed frame $\cF$ can be described in terms of the additive model $U_\cV(E(\cF)^\#_\cV)$ 
introduced in Definition \ref{el conjunto U}. It turns out that the fine spectral structure of the elements of $U_\cV(E(\cF)^\#_\cV)$ can be described using a natural extension of the Fan-Pall interlacing theorem for measurable fields of positive matrices. We develop both results in the Appendix section (see Theorems \ref{teo Fan-Pall medible} and \ref{estructdelU}). As a consequence we obtain the following 

\begin{teo}[Fine spectral structure of $\cV$-duals]\label{teo struc espec}
Let $E(\cF)^\#_\cV$
be the canonical $\cV$-dual frame of $E(\cF)$. Denote the fine spectral structure of $E(\cF)^\#_\cV$ by $\T^k\ni x\mapsto (\lambda_{\cV,\,i}^\#(x))_{i\in\N}$,
%Denote by $\Sast =  {S}_{E(\cF)^\#_{\cV}}$ and 
%by $\lambda_{\cV,\,i}^\#(x)=\lambda_i(\Bx)$, $i\in \N$,
 $x\in\T^k$.
Let $m$ be the measurable function given by
$m(x)=2d(x)-n$, for $x\in\T^k$.
 Given a measurable function $\mu:\mathbb T^k\rightarrow (\ell^1(\N)^+)\da$ (decreasing sequences)  
 described as $\mu=(\mu_i)_{i\in\N}\,$,  the following are equivalent:
\ben
\item There exists $E(\cG)\in \duv$ such that  $\mu (x) = 
\la (\,[S_{E(\cG)}]_x)=\la(S_{\Gamma \cG(x)})  $ for every  $ x\in\T^k$.
\item For a.e. $x\notin\text{Spec}(\cV)$, $\mu(x) =0$. For a.e. $x\in\text{Spec}(\cV)$, $\mu_i(x)=0$ for $i\geq  d(x)+1$ and 
\begin{enumerate}
\item in case that $m(x)\leq 0$, then  $\mu_i(x)\geqp \lambda_{\cV,\,i}^\#(x)$ for  $i\in \I_{d(x)}$;
\item in case that $m(x)\ge 1$, then  $\mu_i(x)\geqp \lambda_{\cV,\,i}^\#(x)$ for $i\in \I_{d(x)}$
and  
$$
\mu_{n- d(x)+i}(x) = \mu_{d(x)-m(x)+i}(x)\le \lambda_{\cV,\,i}^\#(x) \peso{for} i\in \I_{m(x)}  \ .
$$
\end{enumerate}
\een
\end{teo}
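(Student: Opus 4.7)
The plan is to reduce the problem, via the earlier structural results, to a fiberwise application of the classical Fan--Pall interlacing theorem, and then to upgrade the pointwise existence to a measurable selection using the extension developed in the appendix.

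First I would use Proposition \ref{pro SD} to replace the study of $\duv$ by the additive model: as $E(\cG)$ ranges over $\duv$, the frame operator $S_{E(\cG)}$ ranges exactly over $U_\cV(E(\cF)^\#_\cV)$, so $S_{E(\cG)} = S_{E(\cF)^\#_\cV} + B$ with $B\ge 0$ shift preserving, $R(B)\inc \cV$, and $\rk[B]_x \le n-d(x)$ for a.e.\ $x$. Passing to fibers via $[\,\cdot\,]_x$,
\[
[S_{E(\cG)}]_x = A_x + B_x,
\]
where $A_x = [S_{E(\cF)^\#_\cV}]_x$ acts on the $d(x)$-dimensional subspace $J_\cV(x)$ with eigenvalues $(\la^\#_{\cV,i}(x))_{i\in \I_{d(x)}}$, while $B_x\ge 0$ also has range in $J_\cV(x)$ and $\rk B_x \le n-d(x)$. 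In particular $A_x+B_x$ lives in $J_\cV(x)$, which immediately yields $\mu_i(x)=0$ for $i\ge d(x)+1$ and $\mu(x)=0$ outside $\mathrm{Spec}(\cV)$.

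The core of the argument is then the classical Fan--Pall theorem applied to the pair $(A_x,B_x)$ on the finite dimensional space $J_\cV(x)$: for a fixed positive operator $A_x$ with spectrum $(\la^\#_{\cV,i}(x))_{i\in \I_{d(x)}}$, the admissible decreasingly-ordered spectra $(\mu_i(x))_{i\in \I_{d(x)}}$ of $A_x+B_x$ with $\rk B_x\le k:=n-d(x)$ are exactly those satisfying $\mu_i(x)\ge \la^\#_{\cV,i}(x)$ for all $i\in \I_{d(x)}$, together with the reverse interlacing $\la^\#_{\cV,i}(x)\ge \mu_{i+k}(x)$ for $i\in \I_{d(x)-k}$. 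Since $d(x)-k = 2d(x)-n = m(x)$, this latter family of inequalities is vacuous precisely when $m(x)\le 0$ (giving (2)(a)), and, when $m(x)\ge 1$, after the substitution $i+k = d(x)-m(x)+i = n-d(x)+i$ it becomes the bound
\[
\mu_{n-d(x)+i}(x) \;\le\; \la^\#_{\cV,i}(x) \qquad \text{for } i\in \I_{m(x)},
\]
which together with the saturation of the remaining inequalities (there are at most $k$ free eigenvalues) yields (2)(b). This settles the implication (1)$\Rightarrow$(2).

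The main obstacle is the converse direction, where a single measurable operator field $B$ must be produced globally rather than fiber by fiber: given a measurable $\mu$ satisfying (2), one must show that the fiberwise Fan--Pall construction can be carried out measurably in $x$. This is exactly the content of the measurable Fan--Pall theorem developed in the appendix, whose application to $U_\cV(E(\cF)^\#_\cV)$ is formalized in Theorems \ref{teo Fan-Pall medible} and \ref{estructdelU}; together these produce a weakly measurable field $x\mapsto B_x$ of positive matrices, hence a shift preserving operator $B$ with $[B]_x=B_x$. Lemma \ref{pro factoriz} then provides $\cZ=\{z_i\}_{i\in\In}\in \cV^n$ with $S_{E(\cZ)}=B$ and $T_{E(\cF)^\#_\cV}T_{E(\cZ)}^*=0$ (using that $R(T_{E(\cF)}^*)=R(T_{E(\cF)^\#_\cV}^*)$), so that $\cG = \cF^\#_\cV + \cZ$ satisfies $S_{E(\cG)} = S_{E(\cF)^\#_\cV}+B \in U_\cV(E(\cF)^\#_\cV)$ and $E(\cG)\in\duv$ by Proposition \ref{pro SD}, with the prescribed fine spectral structure $\mu$.
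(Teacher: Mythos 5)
Your proposal is correct and follows essentially the same route as the paper, whose proof of this theorem is literally the combination of Proposition \ref{pro SD} (identifying the frame operators of elements of $\duv$ with the additive model $U_\cV(E(\cF)^\#_\cV)$) and Theorem \ref{estructdelU} (the measurable Fan--Pall characterization of the spectra of that set, proved in the appendix via Theorem \ref{teo Fan-Pall medible} and Lemma \ref{pro factoriz}). Your extra fiberwise discussion of the rank-constrained perturbation inequalities is a faithful unpacking of what Theorem \ref{estructdelU} delivers, not a different argument.
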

\proof It follows from Proposition \ref{pro SD} and Theorem \ref{estructdelU}. \QED

\pausa
As a consequence of the description of the fine spectral structure of $\cV$-duals of $E(\cF)$ we characterize the existence of {\it tight} $\cV$-duals of $E(\cF)$ that are {\it shift generated} (compare with \cite{DHan08}).

\begin{cor}\label{cor: existence of tight duals}
With the notations of Theorem \ref{teo struc espec} then there exists a $c$-tight $\cV$-dual $E(\cG)\in \duv$ if and only if
\ben
\item $S_{E(\cF)^\#_\cV}\leq c\cdot P_\cV$;
\item $\rk (\,[c\cdot P_\cV-S_{E(\cF)^\#_\cV}]_x)\leq \min\{d(x),\, n-d(x)\}$ for a.e. $x\in\text{ Spec}(\cV)$.
\een
\end{cor}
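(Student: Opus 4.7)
The plan is to translate the existence of a $c$-tight SG oblique $\cV$-dual into a membership condition for $c\cdot P_\cV$ in the additive set $U_\cV(E(\cF)^\#_\cV)$ of Definition \ref{el conjunto U}, and then unpack what this condition entails.

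First note that any $\cG\in\cV^n$ satisfies $R(S_{E(\cG)})\inc\cV$ and $S_{E(\cG)}|_{\cV^\perp}=0$, so $E(\cG)\in\duv$ is a $c$-tight frame for $\cV$ if and only if $S_{E(\cG)}=c\cdot P_\cV$. By Proposition \ref{pro SD}, the set $\{S_{E(\cG)}:E(\cG)\in\duv\}$ equals $U_\cV(E(\cF)^\#_\cV)$, so a $c$-tight SG $\cV$-dual exists precisely when $c\cdot P_\cV\in U_\cV(E(\cF)^\#_\cV)$. Setting $B\igdef c\cdot P_\cV-S_{E(\cF)^\#_\cV}$, this membership means that $B$ is positive semidefinite, SP, has $R(B)\inc\cV$, and satisfies $\rk([B]_x)\leq n-d(x)$ for a.e.\ $x\in\T^k$.

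Now both $P_\cV$ and $S_{E(\cF)^\#_\cV}$ are SP operators whose ranges lie in $\cV$, so the SP property of $B$ and the inclusion $R(B)\inc\cV$ are automatic. Positivity of $B$ is exactly the inequality $S_{E(\cF)^\#_\cV}\leq c\cdot P_\cV$, which is condition (1). For the rank constraint, the inclusion $R([B]_x)\inc J_\cV(x)$ gives for free $\rk([B]_x)\leq d(x)$, and combining this with $\rk([B]_x)\leq n-d(x)$ the effective pointwise bound becomes $\rk([B]_x)\leq\min\{d(x),n-d(x)\}$. For $x\notin\text{Spec}(\cV)$ we have $d(x)=0$ and hence $[P_\cV]_x=0=[S_{E(\cF)^\#_\cV}]_x$, so $[B]_x=0$ and the rank bound is automatic; restricting attention to $x\in\text{Spec}(\cV)$ then gives condition (2). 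Chaining these equivalences yields the corollary.

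The step requiring real attention is the interplay between the two rank bounds in the third paragraph: one must observe that the range restriction built into the definition of $U_\cV$ already enforces $\rk([B]_x)\leq d(x)$, and only with this free bound does the $U_\cV$-constraint $\rk([B]_x)\leq n-d(x)$ match the $\min\{d(x),n-d(x)\}$ appearing in (2). Beyond that, the argument is essentially a bookkeeping exercise, since Proposition \ref{pro SD} already does the main work of identifying the admissible frame operators of SG $\cV$-duals.
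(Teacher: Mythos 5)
Your proof is correct, and it takes a cleaner route than the paper's own argument. The paper derives the corollary from Theorem \ref{teo struc espec}: it writes the fine spectral structure of $c\cdot P_\cV$ (the constant sequence $c$ of multiplicity $d(x)$), extracts the eigenvalue conditions of that theorem for this particular $\mu$, and then translates them back into the operator conditions $S_{E(\cF)^\#_\cV}\leq c\cdot P_\cV$ and the rank bound. You instead bypass the eigenvalue reformulation entirely: you go directly to Proposition \ref{pro SD}, observe that a $c$-tight SG $\cV$-dual exists precisely when $c\cdot P_\cV\in U_\cV(E(\cF)^\#_\cV)$, and unpack Definition \ref{el conjunto U} for $B=c\cdot P_\cV-S_{E(\cF)^\#_\cV}$. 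Since both $c\cdot P_\cV$ and $S_{E(\cF)^\#_\cV}$ are SP with range in $\cV$, the SP and range conditions on $B$ are automatic, positivity is condition (1), and the $U_\cV$-rank bound $\rk([B]_x)\leq n-d(x)$ together with the free bound $\rk([B]_x)\leq\dim J_\cV(x)=d(x)$ (from $R([B]_x)\subset J_\cV(x)$) gives exactly the $\min\{d(x),n-d(x)\}$ in condition (2); off $\text{Spec}(\cV)$ the fiber vanishes so nothing more is needed. Your approach is more elementary in that it uses only Proposition \ref{pro SD} and the definition of $U_\cV$, rather than invoking the full Fan-Pall--type machinery underlying Theorem \ref{teo struc espec}; the paper's version, on the other hand, fits the narrative of presenting the corollary as a payoff of the fine spectral structure theorem and makes the interlacing picture explicit. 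Both are sound and reach the same conclusion.
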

\begin{proof}  
Theorem \ref{teo struc espec} imply that there exists a $\cV$-dual $E(\cG)\in \duv$ such that $S_{E(\cG)}=c\cdot P_\cV$ if and only if 
$c\geq \lambda_{\cV,\,i}^\#(x)$
%\lambda_i( \Bx)$ 
for $i\in\IN{d(x)}$ and 
%$\lambda_i( \Bx)=c$
$\lambda_{\cV,\,i}^\#(x)=c$
 for $i\in\IN{m(x)}$ whenever $m(x)=2d(x)-n\geq 1$, for a.e. $x\in\text{Spec}(\cV)$.
These last two conditions are equivalent to the fact that $c\cdot P_\cV\geq S_{E(\cF)^\#_\cV}$ and 
$$
\rk(\,[c\cdot P_\cV-S_{E(\cF)^\#_\cV}]_x)\leq d(x)-m(x)=n-d(x) 
\peso{whenever} m(x)\geq 1 \ . 
$$
Also notice that in case $m(x)\leq 0$ then $n-d(x)\geq d(x)=\dim J_\cV(x)$. The proof follows from these remarks.
\end{proof}

\begin{rem}\label{dicotomia dual}
Consider the notations of Theorem \ref{teo struc espec}. As a consequence of Corollary \ref{cor: existence of tight duals}, we get the following dichotomy related with the existence of tight oblique $\cV$-duals of $E(\cF)$:
\ben
\item If $n\geq 2\,d(x)$ for a.e. $x\in\T^k$ then for every $c\geq \|S_{E(\cF)^\#_\cV}\|$ there exists $E(\cG)\in \duv$ that is a $c$-tight frame for $\cV$.
\item If there exists $N\inc \T^k$ with positive Lebesgue measure such that $n< 2\,d(x)$ for a.e. $x\in N$ and there exists a $c$-tight frame $E(\cG)\in\duv$ then $c=\|S_{E(\cF)^\#_\cV}\|$. \EOE
\een
\end{rem}

\section{Applications: optimal oblique SG-duals with norm restrictions}\label{sec applic}

As before, we consider two FSI subspaces $\cV$ and $\cW$ such that $\cW\orto \oplus \cV=L^2(\R^k)$ and $\cF=\{f_i\}_{i\in\In}\in\cW^n$ such that $E(\cF)$
is a frame for $\cW$.

\pausa
As a consequence of the description  of the fine spectral structure of elements in $\duv$, we see that 
the canonical $\cV$-dual is optimal with respect to several criteria. Nevertheless, in applied situations, the canonical dual might not be the best choice: for example, we can be interested in duals of $E(\cF)$ such that the spectrum of their frame operators are as concentrated as possible. Ideally, we would search for tight dual frames for $E(\cF)$, although Corollary \ref{cor: existence of tight duals} shows that there are restrictions for the existence of such duals.

\pausa
In order to search for alternate $\cV$-duals that are spectrally more stable, we proceed as follows: for 
$w \geq \sum_{i\in\In}\|f^\#_{\cV,\,i}\|^2$, where $E(\cF)^\#_\cV=\{T_\ell\,f^\#_{\cV,\,i}\}_{(\ell,\,i)\in\Z^k\times \In}$, we consider
$$\duvw = 
\cD_{{\cV}, \, w}^{SG}(E(\cF))\igdef
\big\{ E(\cG) \in \duv: \ 
 \cG = \{g_i\}_{i\in\In} \ \ \text{and} \ \  
 \sum_{i\in\In}\|g_i\|^2\geq w 
\big\}\ .$$  
Notice that if $w>\sum_{i\in\In}\|f^\#_{\cV,\,i}\|^2$ 
then $E(\cF)^\#_\cV \notin \duvw$ and therefore, it is natural to ask whether there is an optimal dual fulfilling the previous requirements. Using the identity 
\beq\label{eq ident tra norm}
\sum_{i\in\In}\|g_i\|^2= \int_{\T^k}\sum_{i\in\In} \|\Gamma g_i(x)\|^2\ dx= \int_{\T^k} \tr(\,[S_{E(\cG)}]_x)\ dx= \int _{\T^k} \ \sum_{i\in\N}\mu_i(x) \ dx 
 \eeq
where $\lambda(\,[S_{E(\cG)}]_x)=(\mu_i(x))_{i\in\N}$ for a.e. $x\in\T^k$, we see that Theorem \ref{teo struc espec}  gives a complete solution to a frame design problem in the sense that it allows to get a complete description of the eigenvalue lists of the frame operators of elements in $\duvw$. It is then natural to seek for those oblique SG-duals $E(\cG)\in\duvw$ that minimize the convex potentials $P^\cV_\varphi$, for $\varphi\in\convf$; in order to deal with this problem we first examine a construction known as water-filling in terms of submajorization, in the general context of measure spaces (see Theorem \ref{teo op water}). We then apply these results together with the properties of submajorization and results from matrix analysis to conclude that there are structural optimal duals with norm restrictions.  These optimal solutions are obtained in terms of a non-commutative water-filling construction. 

\subsection{Water-filling in measure spaces}

The water-filling construction goes back to the work of Shanon \cite{Shann}, as the solution of an optimal spectral allocation problem (see \cite{CTbook}). The water-filling strategy has also been the main tool in the design of channels with optimal capacity (see \cite{Telat} and the more recent work on iterative water-filling techniques \cite{PF05,CPB09}).

\pausa
As a first step towards an extension of this construction, we examine its scalar counter-part in the general context of measure spaces. In the next section we show that the water-filling technique produces optimal solutions in the general (non-commutative) context of measurable fields of positive semidefinite matrix valued functions.

\pausa
Throughout this section the triple $(X,\cX,\mu)$ denotes 
a probability space. Recall that we  denote by $L^\infty(X,\mu)^+ 
= \{f\in L^\infty(X,\mu): f\ge 0\}$.

\begin{fed}[Water-filling at level $c$]\label{def: wf}\rm
Let $f\in L^\infty(X,\mu)^+ $. Given $c \geq \essinf f\geq 0$ we consider $f_c\in L^\infty(X,\mu)^+ $ given by $f_c= \max\{f,\,c\}=f + (c-f)^+$, where $g^+$ denotes the positive part of a real function $g$. 
\EOE
\end{fed}

\pausa
In order to study the submajorization properties of the function $f_c$ obtained by the water-filling construction as above, we consider the following result in which we obtain a simple relation between the decreasing rearrangements of $f$ and $f_c$.

\begin{lem}\label{pro: prop. reo.} Let $f\in L^\infty(X,\mu)^+ $ and let $c \geq \essinf f\geq 0$. 
Consider the number  
\beq\label{eq lem reord}
s_0= \mu\{x\in X:\ f(x)>c\} \ . \peso{Then}
f_c^*(s)=\left\{
  \begin{array}{ccc}
     f^*(s) & if & 0\leq s <s_0 \,; \\
      c & if & s_0\leq s \leq 1\, .
   \end{array}
	\right.
	\eeq
\end{lem}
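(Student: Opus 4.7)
The approach is to analyze the distribution function (super-level set measure) of $f_c$ directly from its definition and then invert it to get $f_c^*$. The key observation is that water-filling at level $c$ does not affect the super-level sets at heights above $c$: writing $A_g(t)=\mu\{x\in X:g(x)>t\}$ for a measurable $g\ge 0$, one immediately checks, from $f_c(x)=\max\{f(x),c\}$, that
\[
A_{f_c}(t)=\begin{cases} 1, & 0\le t<c,\\ A_f(t), & t\ge c,\end{cases}
\]
because if $t<c$ then $f_c(x)\ge c>t$ for every $x$, and if $t\ge c$ then $f_c(x)>t$ forces $f(x)>t\ge c$, i.e.\ the water-level $c$ is invisible above $c$. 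In particular $A_f(c)=A_{f_c}(c)=s_0$.

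With this in hand, I would plug into the definition of the decreasing rearrangement \eqref{eq:reord} and split into the two ranges of $s$. For $s<s_0$ the goal is $f_c^*(s)=f^*(s)$; here one shows that the two sets
\[
S_f(s)=\{t\ge 0:A_f(t)>s\}\py S_{f_c}(s)=\{t\ge 0:A_{f_c}(t)>s\}
\]
coincide, so their suprema agree. Above $c$ they agree by the identity $A_{f_c}(t)=A_f(t)$; below $c$ both sets contain the whole interval $[0,c)$, because $A_{f_c}\equiv 1>s$ there, while $A_f(t)\ge A_f(c)=s_0>s$ by the monotonicity of $A_f$.

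For $s_0\le s<1$ the goal is $f_c^*(s)=c$. Here, if $t\ge c$ then $A_{f_c}(t)=A_f(t)\le A_f(c)=s_0\le s$, so such $t$ are excluded; while if $t<c$ then $A_{f_c}(t)=1>s$, so $[0,c)\subseteq S_{f_c}(s)$. Hence $S_{f_c}(s)=[0,c)$ and the supremum is $c$. For the endpoint $s=1$ the statement follows from right-continuity of $f_c^*$ (Remark \ref{rem:prop rear elem}(1)).

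I do not expect a major obstacle: the proof is essentially a careful bookkeeping of strict versus non-strict inequalities and monotonicity of the distribution function. The only mildly delicate point is handling the degenerate ranges (for instance, $s_0=0$, which happens exactly when $f\le c$ a.e.\ and forces $f_c=c$ a.e.; or $s_0=1$, where the second case is vacuous), but in each of these the formula reduces to a trivial identity and does not require separate argument beyond checking that the set $S_{f_c}(s)$ is indeed what we claimed.
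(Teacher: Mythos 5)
Your proof is correct and follows essentially the same route as the paper: both arguments reduce the claim to the observation that $\{f_c>t\}=\{f>t\}$ for $t\geq c$ while $\mu\{f_c>t\}=1$ for $t<c$, and then read off the supremum defining $f_c^*(s)$ in the two ranges of $s$ (your version just makes explicit the $s\geq s_0$ case, which the paper dismisses as straightforward). The only cosmetic quibble is the appeal to right-continuity at $s=1$: since $f^*$ is defined on $[0,1)$ by Eq. \eqref{eq:reord}, that endpoint needs no argument at all.
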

\begin{proof}
Notice that by Eq. \eqref{eq:reord}, for $0\leq s<s_0$ we have that 
\begin{eqnarray*}
f^*(s)&=& \sup \, \{ t\in \R^+ : \ \mu \{x\in X:\ f(x)>t\} >s\}\\
&=&   \sup\, \{ t\in \R^+ : \ \mu \{x\in X:\ f(x)>t\} >s  
\ \text{ and } \ t\geq c \}\\
&=& \sup\, \{ t\in \R^+ : \ \mu \{x\in X:\ f_c(x)>t\} >s\ \text{ and } \ t\geq c \}\\
&=& \sup\, \{ t\in \R^+ : \ \mu \{x\in X:\ f_c(x)>t\} >s\}=f^*_c(s) \ .
\end{eqnarray*}
It is straightforward to see that if $s_0\leq s\leq 1$ then $f^*_c(s)=c$.
\end{proof}

\pausa
In order to prove Theorem \ref{teo op water} below, we shall need 
an explicit statement of some re-parametrized versions of the basics results of section \ref{2.3}:

\begin{lem}\label{lem prop reord ct}
Let $a\coma b \in \R$ be such that $a<b$ and let  $k\in L^\infty([\,a\coma b\,],\, \nu)^+$ 
be a non-increasing right continuous function, where $\nu=(b-a)^{-1}\, dt$ is the normalized Lebesgue measure on $[a,b]$. 
 Then 
\ben
\item 
The decreasing rearrangement 
$k^*(t) = k\big(\, (b-a)\, t + a\,\big) $ for every $t \in [0\coma 1)$. 
\item Fix a constant $c\in \R$. Then 
$$ 
(b-a)\, c \le \int _a^b \, k(t) \, dt \implies (s-a)\, c \le \int _a^s \, k(t) \, dt \peso{for every}
s \in [a\coma b] \ .
$$
\een
\end{lem}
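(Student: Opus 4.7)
The plan is to prove the two items separately; item (1) is a direct computation using the definition of decreasing rearrangement, and item (2) follows from a simple concavity argument once $k$ is non-increasing.

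For item (1), I would unwind Eq.~\eqref{eq:reord} in the probability space $([a,b],\nu)$. Because $k$ is non-increasing and right continuous, for each $t\in\R^+$ the super-level set $\{x\in[a,b]:k(x)>t\}$ is an interval of the form $[a,\alpha(t))$ with $\alpha(t)=\sup\{x\in[a,b]:k(x)>t\}$; indeed, right continuity forces $k(\alpha(t))\le t$, so the endpoint is excluded. Hence $\nu\{k>t\}=(\alpha(t)-a)/(b-a)$, and the condition $\nu\{k>t\}>s$ is equivalent to $\alpha(t)>(b-a)s+a=:u$. The key observation is that, again by monotonicity and right continuity, $\sup_{x>u}k(x)=k(u)$, so $\alpha(t)>u$ iff $t<k(u)$. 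Taking the supremum over such $t$ yields
\[
k^*(s)=\sup\{t\in\R^+:t<k(u)\}=k(u)=k\bigl((b-a)s+a\bigr),
\]
which is exactly the claim.

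For item (2), define $\phi:[a,b]\to\R$ by $\phi(s)=\int_a^s k(t)\,dt-(s-a)\,c$. Then $\phi(a)=0$, and the hypothesis is $\phi(b)\ge0$. Since $k$ is non-increasing, $\phi$ is absolutely continuous with non-increasing derivative $k-c$, hence $\phi$ is concave on $[a,b]$. For any $s=(1-\lambda)a+\lambda b\in[a,b]$ with $\lambda\in[0,1]$, concavity gives
\[
\phi(s)\ge(1-\lambda)\phi(a)+\lambda\phi(b)=\lambda\phi(b)\ge0,
\]
which rearranges to $(s-a)c\le\int_a^s k(t)\,dt$, as desired.

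The only mildly delicate step is item (1), where one must carefully use both the right-continuity and the monotonicity of $k$ to pin down the super-level sets as half-open intervals and to identify $\sup_{x>u}k(x)=k(u)$; once this is done, everything else reduces to unwinding definitions. Item (2) is short once concavity of $\phi$ is recognized, and no extra machinery from the paper is needed.
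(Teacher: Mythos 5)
Your proof is correct. Note that the paper labels the proof as ``Straightforward,'' so there is no argument in the text to compare against directly, but the remark immediately following the lemma reveals the intended derivation of item (2): it is the reparametrized form of the submajorization $c\cdot 1_{[a,b]}\prec_w k$ from Example \ref{exa: integral}, obtained by writing the inequality $\int_0^s (c\cdot 1_{[a,b]})^*\,dt\le\int_0^s k^*\,dt$ and substituting the formula for $k^*$ from item (1). Your concavity argument for item (2) avoids the majorization machinery entirely and is arguably cleaner as a standalone proof, though it is less in the spirit of the section, whose point is precisely to rephrase such inequalities as submajorization statements. For item (1), your computation with super-level sets and the identity $\sup_{x>u}k(x)=k(u)$ (valid for $u<b$, which holds since $s<1$) is exactly what is needed; the only cosmetic caveat is that the super-level set can be all of $[a,b]$ rather than $[a,\alpha(t))$ when $k(b)>t$, and can be empty for large $t$, but these affect nothing since you only use its Lebesgue measure. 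In short, the proposal is a correct and complete proof of both items, with a different (but equally valid) route for item (2) than the one the paper hints at.
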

\proof Straightforward. \QED

\pausa
With the notations of Lemma \ref{lem prop reord ct} above, notice that item 2. is a restatement (using the re-parametrization from item 1) of the submajorization inequalities corresponding to 
$c\prec_w k$ in $([a,b],\,\nu)$ whenever $c\leq \int_{[a,b]} k\ d\nu$ (see Example \ref{exa: integral}).

\begin{rem}\label{defi ct}
 Let $f\in L^\infty(X,\mu)^+$ and consider $\phi: [\essinf f, \infty)\to \R^+$ given by
$$\phi(c)=\int_X f_c\ d\mu= \int_X f(x) + (c-f(x))^+\ d\mu(x)\,. $$ 
Then, it is easy to see that $\phi$ has the following properties:
\ben
\item $\phi(\essinf f)=\int_X f\ d\mu$ and $\lim _{c\to +\infty} \phi(c)= +\infty$;
\item $\phi$ is continuous and strictly increasing.
\een Hence, for every $w\geq \int_X f\ d\mu$ there exists a unique $c(w)=c\geq \essinf f$ such that 
\beq\label{eq. defi ct}
\phi(c(w))=w \peso{i.e.} \int_X f_{c(w)}\ d\mu=w\ . 
\eeq \EOE
\end{rem}

\begin{teo}[$\prec_w$-optimality of water-filling] \label{teo op water}
Let $f\in L^\infty(X,\mu)^+$, take $w\geq \int_X f\ d\mu$ and consider the constant  $c(w)=c$ as in Remark \ref{defi ct}.
 Then, for every
$h\in L^\infty(X,\mu)^+$, 
$$
f\leq h \py \int_X h\ d\mu\geq w  \implies f_c\prec_w h  \ .
$$
\end{teo}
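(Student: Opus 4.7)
The goal is to verify the submajorization inequalities
$$\int_0^s f_c^*(t)\,dt \;\leq\; \int_0^s h^*(t)\,dt \peso{for every} s\in[0,1]\, .$$
The key ingredients will be the explicit formula for $f_c^*$ provided by Lemma~\ref{pro: prop. reo.}, the monotonicity of rearrangements under pointwise inequality of functions, and a simple concavity argument on the upper half of the interval.

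\medskip

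\noindent\textbf{Step 1 (reduction via Lemma~\ref{pro: prop. reo.}).} Set $s_0 = \mu\{x\in X : f(x)>c\}$. By Lemma~\ref{pro: prop. reo.},
$$f_c^*(t) = f^*(t) \peso{for} 0\le t<s_0\, ,\peso{and} f_c^*(t)=c \peso{for} s_0\le t\le 1\, .$$
Since $f\leq h$ implies $f^*\leq h^*$ pointwise (by Remark~\ref{rem:prop rear elem}, item 3, applied to $f$ and $h$), we immediately obtain the submajorization inequality on $[0,s_0]$:
$$\int_0^s f_c^*(t)\,dt = \int_0^s f^*(t)\,dt \leq \int_0^s h^*(t)\,dt \peso{for every} s\in[0,s_0]\, .$$

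\medskip

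\noindent\textbf{Step 2 (concavity on $[s_0,1]$).} For $s\in[s_0,1]$ define
$$G(s) \,=\, \int_0^s h^*(t)\,dt \,-\, \int_0^s f_c^*(t)\,dt\, .$$
On this interval $f_c^*(t)\equiv c$, so $G'(s)=h^*(s)-c$. Because $h^*$ is non-increasing (Remark~\ref{rem:prop rear elem}, item 1), $G'$ is non-increasing on $[s_0,1]$, so $G$ is concave there. It remains only to check the endpoints: by Step~1 we have $G(s_0)\geq 0$, and by the hypothesis $\int_X h\,d\mu \geq w$ together with the defining property $\int_X f_c\,d\mu = w$ (Remark~\ref{defi ct}) and the equimeasurability from Remark~\ref{rem:prop rear elem}, item 2, we get
$$G(1) = \int_0^1 h^*(t)\,dt - \int_0^1 f_c^*(t)\,dt = \int_X h\,d\mu - w \geq 0\, .$$
Concavity of $G$ combined with $G(s_0),G(1)\geq 0$ yields $G(s)\geq 0$ on $[s_0,1]$, completing the required submajorization inequality on the remaining interval.

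\medskip

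\noindent\textbf{Main obstacle.} The inequality cannot be established pointwise between the rearrangements: in general $h^*$ may dip below $c$ on parts of $[s_0,1]$ while $f_c^*=c$ there, so $h^*\geq f_c^*$ can fail and one really needs an \emph{integrated} argument. The natural idea of splitting as $\int_0^{s_0}(h^*-f^*) + \int_{s_0}^s(h^*-c)$ does not work term by term. The decisive observation, which I expect to be the heart of the argument, is that the function $G$ is \emph{concave} on $[s_0,1]$ because $h^*$ is monotone; this is what lets the global mass inequality $\int h \geq w$ propagate to every partial integral $\int_0^s h^*$ without any pointwise information on $h^*$ beyond $s_0$.
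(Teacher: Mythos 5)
Your proof is correct and follows the same overall route as the paper's: split at $s_0=\mu\{f>c\}$, use $f^*\le h^*$ on $[0,s_0]$, and then exploit the monotonicity of $h^*$ together with the total-mass bound $\int_X h\,d\mu\ge w$ to handle $[s_0,1]$. Where you differ is the packaging of that second half. The paper introduces the auxiliary non-increasing map $k = h^* + \tfrac{\alpha}{1-s_0}$ with $\alpha = \int_0^{s_0}(h^*-f_c^*)\ge 0$, checks $\int_{s_0}^1 k \ge (1-s_0)c$, invokes Lemma~\ref{lem prop reord ct} to get $(s-s_0)c \le \int_{s_0}^s k$, and then untangles the terms. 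You instead observe directly that $G(s)=\int_0^s(h^*-f_c^*)\,dt$ is concave on $[s_0,1]$ (its a.e.\ derivative $h^*-c$ is non-increasing) and nonnegative at both endpoints $s_0$ and $1$, hence nonnegative throughout by the chord inequality. Mathematically these are the same idea — Lemma~\ref{lem prop reord ct} is exactly the concavity of $s\mapsto\int_a^s k$ in a lightly disguised form — but your version avoids the auxiliary function $k$ and makes the role of concavity explicit, which is arguably cleaner. Your ``Main obstacle'' paragraph correctly identifies concavity as the decisive mechanism.
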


\begin{proof} 
Assume that 
$f\leq h$ and $\int_X h\ d\mu\geq w$.
If we let $s_0=\mu\{x\in X:\ f(x)>c\}$ then, by Lemma \ref{pro: prop. reo.}, we have that Eq. \eqref{eq lem reord} holds. Thus, using Remark \ref{rem:prop rear elem}, we see that if $0\leq s < s_0$ then  
\beq \label{eq teo parte1}
\int_0^sf_c^*(t)\ dt=\int_0^sf^*(t)\ dt\leq \int_0^s h^*(t)\ dt\ .
\eeq
Fix now $s_0\leq s\leq 1$ and consider 
$$\al = \int_{0}^{s_0} h^*(t)\,\ dt - \int_{0}^{s_0} f_{c}^*(t)\,\ dt \geq 0 \py
k= h^*+ \frac{1}{1-s_0} \,\al \in L^\infty([0, 1],\ dt)^+\ .$$
Notice that $k$ is a non-increasing right continuous map. In this case we get that
\begin{eqnarray*}
\int_{s_0}^{1} k(t) \ dt&=&\int_{s_0}^{1} h^*(t)\,\ dt + \alpha \\ 
&=&\int_{s_0}^{1} h^*(t)\ dt + \left(\int_{0}^{s_0} h^*(t)\ dt - \int_{0}^{s_0} f_{c}^*(t)\,\ dt\right)\\
&\ge& w -(w - (1-s_0)\, c)=(1-s_0)\, c\ .
\end{eqnarray*}
Then, Lemma \ref{lem prop reord ct} (applied to the map $k|_{[s_0\coma 1]}\,$) implies that 
$$
(s-s_0)\, c \leq \int_{s_0}^{s} k(t)\ dt =
\int_{s_0}^{s} h^*(t)\ dt + \frac{s-s_0}{1-s_0} \ \alpha \fe s\in [s_0\coma 1] \ . 
$$ 
Hence, using the inequality above and Lemma \ref{pro: prop. reo.}, we conclude that for $s_0\leq s<1$
\begin{eqnarray*}
\int_{0}^{s} f_{c}^*(t) \ dt &=&\int_{0}^{s_0} h^*(t)\,\ dt -\alpha +(s-s_0)\, c \\ 
&\leq & \int_{0}^{s} h^*(t)\,\ dt + \left(\frac{s-s_0}{1-s_0} -1\right)\ \alpha\leq \int_{0}^{s} h^*(t)\,\ dt \ .
\end{eqnarray*}
This last fact together with Eq. \eqref{eq teo parte1} show that $f_c\prec_w h$.
\end{proof}

\pausa
Theorem \ref{teo op water} above implies a family of integral inequalities in terms of convex functions involving the water-filling of a function $f$ at level $c$. We will need these facts in order to show the optimality properties of the non-commutative version of waterfilling.

\begin{cor}\label{pregunta reord1.5}
With the notations of Theorem \ref{teo op water}, if $\varphi\in\convf$ is non-decreasing then
\beq\label{eq: ineq von varph}
 \int _X \varphi\circ h\ d\mu\geq \int_X \varphi\circ f_c\ d\mu\, .
 \eeq
If there is a non-decreasing $\varphi\in\convfs$ such that equality holds in Eq. \eqref{eq: ineq von varph}
then $h=f_c$.
\end{cor}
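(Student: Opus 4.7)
The plan is to derive both assertions as consequences of Theorem \ref{teo op water} combined with the general majorization machinery of Section \ref{2.3}.

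First, by Theorem \ref{teo op water} applied to the hypotheses on $h$ (namely $f\le h$ and $\int_X h\,d\mu \ge w$), we obtain the submajorization relation $f_c\prec_w h$ in $(X,\cX,\mu)$. Since $\varphi\in\convf$ is non-decreasing, the last sentence of Theorem \ref{teo porque mayo} immediately yields
$$
\int_X \varphi\circ f_c \, d\mu \le \int_X \varphi\circ h \, d\mu\ ,
$$
which is \eqref{eq: ineq von varph}.

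For the equality case, assume $\varphi\in\convfs$ is non-decreasing and that equality holds in \eqref{eq: ineq von varph}. Apply Proposition \ref{pro int y reo} (with $g=f_c$, $f=h$) to conclude that $f_c^*=h^*$, i.e.\ $f_c$ and $h$ are equimeasurable. In particular, by item 2 of Remark \ref{rem:prop rear elem},
$$
\int_X h\, d\mu = \int_0^1 h^*(t)\, dt = \int_0^1 f_c^*(t)\, dt = \int_X f_c\, d\mu = w\ .
$$

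The main step is then to upgrade equimeasurability to the pointwise identity $h = f_c$ a.e. For this, I would argue as follows. Since $f_c=\max\{f,c\}\ge c$ everywhere, we have $\mu\{x\in X: f_c(x)<c\}=0$; equimeasurability of $f_c$ and $h$ (through the distribution function determined by the decreasing rearrangement) then gives $\mu\{x\in X: h(x)<c\}=0$, so $h\ge c$ a.e. Combined with the hypothesis $h\ge f$, this yields $h\ge\max\{f,c\}=f_c$ a.e. Since $h-f_c\ge 0$ a.e.\ and $\int_X(h-f_c)\,d\mu=0$, we conclude $h=f_c$ a.e.

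The most delicate step is the passage from $f_c^*=h^*$ to the pointwise bound $h\ge c$ a.e.; once this is in place, monotonicity together with the matching of integrals closes the argument cleanly.
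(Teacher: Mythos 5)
Your proof is correct. The first part and the reduction of the equality case to $f_c^*=h^*$ via Proposition \ref{pro int y reo} coincide with the paper's argument, but your way of upgrading $f_c^*=h^*$ to $h=f_c$ is genuinely different and simpler. The paper sets $B=\{x\in X:\ f(x)>c\}$, computes the decreasing rearrangements of the truncations $f\cdot 1_B$ and $h\cdot 1_B$, squeezes $(h\cdot 1_B)^*$ between $(f\cdot 1_B)^*$ and $1_{[0,s_0)}\cdot f_c^*$ to conclude $h\cdot 1_B=f_c\cdot 1_B$, and then uses $\mu(h^{-1}(\{c\}))=|(f_c^*)^{-1}(\{c\})|=1-\mu(B)$ to identify $h$ with $c$ on $X\setminus B$. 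You instead observe that equimeasurability of $h$ and $f_c$ (Remark \ref{rem:prop rear elem}, item 2, applied to the Borel set $[0,c)$) forces $h\geq c$ a.e., which together with the hypothesis $h\geq f$ gives the pointwise domination $h\geq \max\{f,c\}=f_c$ a.e.; since $f_c^*=h^*$ also forces $\int_X h\,d\mu=\int_X f_c\,d\mu$, the nonnegative function $h-f_c$ has zero integral and vanishes a.e. Your route avoids the truncation-and-rearrangement bookkeeping entirely and isolates the one structural fact that matters (namely that $f_c$ is, among all measurable majorants of $f$ with the prescribed integral, the one that is bounded below by $c$); the paper's route, while longer, stays entirely within the rearrangement calculus it has already set up and does not need the final ``nonnegative with zero integral'' step. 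Both are complete proofs.
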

\begin{proof}
The first claim is a consequence of the submajorization relation in Theorem \ref{teo op water} and Theorem 
\ref{teo porque mayo}. If we further assume that $\varphi\in\convfs$ is such that equality holds in Eq. \eqref{eq: ineq von varph} then, by Proposition \ref{pro int y reo}, we see that $f_c^*=h^*$. Let $B=\{x\in X:\ f(x)>c\}$ so that $s_0=\mu(B)$. Then, it is straightforward to show that 
$$ 
(f\cdot 1_B)^*(s)=
\left\{
  \begin{array}{ccc}
     f^*(s) & \text{if} & s\in[0,s_0)\, ; \\
      0 & \text{if} & s\in[s_0,1)\, .
   \end{array}
	\right.
$$ Notice that, in particular, $(f\cdot 1_B)^*=1_{[0,s_0)}\cdot f_c^*$. 
On the other hand we have $(h\cdot 1_B)^*=1_{[0,\,s_0)}\cdot (h\cdot 1_B)^*$. Hence, 
since $h\geq h\cdot 1_B\geq f\cdot 1_B$, by Remark \ref{rem:prop rear elem}
 we have that $$h^*\geq (h\cdot 1_B)^*=1_{[0,s_0)}\cdot (h\cdot 1_B)^* \geq (f\cdot  1_B)^*=1_{[0,s_0)}\cdot f_c^*\quad \implies \quad  (h\cdot 1_B)^* =(f\cdot  1_B)^*\, .$$
Therefore, again by Remark \ref{rem:prop rear elem}, we get that $h\cdot 1_B =f\cdot  1_B=f_c\cdot  1_B> c\cdot  1_B$, where the last facts follow from Definition \ref{def: wf}. Finally, notice that 
$$
\mu(h^{-1}(\{c\}))=|(h^*)^{-1}(\{c\})| =|(f_c^*)^{-1}(\{c\})|=1-\mu(B) \ , 
$$
which shows that $h\cdot 1_{X\setminus B}=c\cdot 1_{X\setminus B}$ 
and hence $h=f_c\,$.
\end{proof}

\subsection{Optimal SG-duals with norm restrictions: NC water-filling}

In what follows we show the existence of {\it structural} optimal SG oblique duals of a fixed frame $E(\cF)$ with norm restrictions, as described at the beginning of Section \ref{sec applic}. That is, we explicitly construct a dual frame $E(\cG^{\rm op})\in \duvw$ such that for every $E(\cG)\in \duvw$ then 
$$ P_\varphi^\cV(E(\cG^{\rm op}))\leq P_\varphi^\cV(E(\cG)) \, ,$$
for every convex potential $P^\cV_\varphi$ associated to a non-decreasing $\varphi\in\convf$. Moreover, the arguments involved in this construction show that (structural) optimal SG oblique duals with norm restrictions  share several spectral properties. We end the section with a non-commutative counter-part of the water-filling construction for functions, that allows to describe the spectral and geometrical structure of optimal SG oblique duals in $\duvw$ in some detail.

\begin{teo}[Optimal duals in $\duvw$]\label{teo opt dual con norm}
Let $\cV$ and $\cW$ be FSI subspaces of $L^2(\R^k)$ such that $\cW\orto \oplus \cV=L^2(\R^k)$.
Let $\cF=\{f_i\}_{i\in \In}$ be such that $E(\cF)$ is a frame for $\cW$ and
$w> \sum_{i\in\In}\|f^\#_{\cV,\,i}\|^2$, where $E(\cF)^\#_\cV=\{T_\ell\,f^\#_{\cV,\,i}\}_{(\ell,\,i)\in\Z^k\times \In}$. Then, there exists $\cG^{\rm op}=\{g_i^{\rm op}\}_{i\in\In}\in\cV^n$ such that:
\begin{enumerate}
\item $E(\cG^{\rm op})\in \duvw$ and $\sum_{i\in\In}\|g_i^{\rm op}\|^2= w$.
\item For every $\cG=\{g_i\}_{i\in\In}$ such that $E(\cG)\in \duvw$ 
and every non-decreasing $\varphi\in \convf$ we have that
$$ P_\varphi^\cV(E(\cG^{\rm op}))\leq P_\varphi^\cV(E(\cG)) \ .$$ 
\end{enumerate}
\end{teo}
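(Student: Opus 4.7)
The strategy is to reduce the problem over $\duvw$ to a scalar water-filling problem on a suitably chosen probability space, and then invoke Theorems~\ref{teo struc espec} and~\ref{teo op water} together with Corollary~\ref{pregunta reord1.5} to re-assemble the output into a frame. By Theorem~\ref{teo struc espec}, a dual $E(\cG)\in\duv$ is determined at the level of its frame operator by a measurable eigenvalue field $\mu(x)=(\mu_i(x))_{i\in\N}$ (non-increasing on each fibre and supported on $\text{Spec}(\cV)$) satisfying the interlacing bounds $\mu_i(x)\ge \lambda^\#_{\cV,i}(x)$ for $i\in\I_{d(x)}$ and, when $m(x)\ge 1$, the upper bounds $\mu_{n-d(x)+i}(x)\le \lambda^\#_{\cV,i}(x)$ for $i\in\I_{m(x)}$. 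By Eq.~\eqref{eq ident tra norm}, the condition $E(\cG)\in\duvw$ becomes $\int_{\T^k}\sum_i\mu_i(x)\,dx\ge w$, and the convex potential reads $P_\varphi^\cV(E(\cG))=\int_{\T^k}\sum_{i\in\N}\varphi(\mu_i(x))\,dx$.

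Next I would encode these eigenvalue fields as non-negative scalar functions on a probability space $(X,\cX,\nu)$ built as in the proof of Theorem~\ref{pro min del PF}, namely a normalized disjoint union of copies of the fibres $X_i$, so that the canonical dual corresponds to a distinguished function $f^\#\in L^\infty(X,\nu)^+$ and the convex potentials become integrals $\int_X\varphi\circ f\,d\nu$. Restricting to the \emph{raisable} slots on each fibre---those dictated by the rank bound $\rk([B]_x)\le n-d(x)$ coming from Proposition~\ref{pro SD}, namely the $\min\{d(x),n-d(x)\}$ smallest canonical eigenvalues, which may be lifted without violating the upper bounds of Theorem~\ref{teo struc espec}---I would apply the water-filling of Definition~\ref{def: wf} to $f^\#$ at the unique level $c$ for which the total integral equals the (normalized) value of $w$, its existence being guaranteed by Remark~\ref{defi ct}. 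Off the raisable slots the new eigenvalues coincide with the canonical ones. A direct check against Theorem~\ref{teo struc espec} shows that the resulting non-increasing field $\mu^{\rm op}(x)$ obeys both the lower and upper interlacing bounds; inverting the spectral parametrization via the same theorem then produces $\cG^{\rm op}=\{g_i^{\rm op}\}_{i\in\In}\in\cV^n$ with $E(\cG^{\rm op})\in\duvw$ and $\sum_i\|g_i^{\rm op}\|^2=w$, which settles~(1).

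For the optimality statement~(2), take any $\cG=\{g_i\}_{i\in\In}$ with $E(\cG)\in\duvw$ and let $f$ be the scalar encoding of its eigenvalue field. Then $f\ge f^\#$ pointwise on $X$, and $\int_X f\,d\nu$ meets (in normalized form) the budget required by the water-filling. Theorem~\ref{teo op water} yields the submajorization $f^\#_c\prec_w f$, and Corollary~\ref{pregunta reord1.5} upgrades this to $\int_X\varphi\circ f^\#_c\,d\nu\le\int_X\varphi\circ f\,d\nu$ for every non-decreasing $\varphi\in\convf$, which unwinds to $P_\varphi^\cV(E(\cG^{\rm op}))\le P_\varphi^\cV(E(\cG))$.

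The hard part is the middle paragraph: the single global water level $c$ must simultaneously respect the fibrewise upper bounds of Theorem~\ref{teo struc espec}, and the choice of raisable support must be compatible with the non-increasing ordering on every fibre. This is where the \emph{non-commutative} flavour of the construction appears---raising the smallest raisable eigenvalues of $[S_{E(\cF)^\#_\cV}]_x$ up to level $c$ corresponds at the operator level to an additive positive perturbation of rank at most $n-d(x)$, exactly matching the geometric constraint recorded in Proposition~\ref{pro SD}.
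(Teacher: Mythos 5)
Your outline captures the scaffolding of the argument correctly---the probability space built from the fibres, the water-filling on the raisable slots, the appeal to Theorem~\ref{teo struc espec} to re-assemble a frame, and the eventual appeal to Theorem~\ref{teo op water} and Corollary~\ref{pregunta reord1.5}---but the optimality step in your last paragraph contains a genuine gap, and it is not the one you flagged. The missing ingredient is \emph{Lidskii's additive inequality}, and without it the submajorization argument does not close.

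The problem is in the sentence ``let $f$ be the scalar encoding of its eigenvalue field; then $f\ge f^\#$ pointwise on $X$, and $\int_X f\,d\nu$ meets the budget required by the water-filling.'' There is no single scalar encoding of $\la([S_{E(\cG)}]_x)$ for which both hypotheses of Theorem~\ref{teo op water} are satisfied. If you encode \emph{all} $d(x)$ eigenvalues of $[S_{E(\cG)}]_x$ on each fibre, the domination $f\ge f^\#$ and the budget $\int f\,d\nu\ge w$ both hold, but the corresponding unrestricted water-filling of $f^\#$ (raising $\lambda^\#_{\cV,i}(x)$ to level $c$ for \emph{every} $i\in\I_{d(x)}$) can violate the upper bound $\mu_{n-d(x)+i}(x)\le\lambda^\#_{\cV,i}(x)$ of Theorem~\ref{teo struc espec} as soon as $c>\lambda^\#_{\cV,m(x)}(x)$, so the water-filled function need not be realizable as a dual at all, and in particular need not coincide with the spectrum of your $\cG^{\rm op}$. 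If instead you encode only the $r_i=\min\{n-i,i\}$ smallest eigenvalues on each fibre $X_i$ (which is the space $Y$ on which the construction actually lives), then $f\ge f^\#$ still holds slot-by-slot by Weyl, but the budget $\int_Y f\,d\nu\ge (w-w_0)+\int_Y f^\#\,d\nu$ fails in general: the extra trace $\int\tr[B]_x\,dx\ge w-w_0$ of an arbitrary dual $S_{E(\cG)}=S_{E(\cF)^\#_\cV}+B$ is \emph{not} guaranteed to be absorbed into the $r_i$ smallest eigenvalues of $[S_{E(\cG)}]_x$; indeed the largest $m_i$ eigenvalues of $[S_{E(\cG)}]_x$ dominate those of $[S_{E(\cF)^\#_\cV}]_x$ slot-by-slot, so the raisable part of the spectrum of $\cG$ may carry strictly less mass than the budget requires. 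The paper resolves exactly this by never comparing $f^\#_c$ with the spectrum of $S_{E(\cG)}$ directly: it introduces, on the raisable-slot space $Y$, the function $h$ encoding the Lidskii-mixed sequence $\big(\lambda_{i-j+1}([S_{E(\cF)^\#_\cV}]_x)+\lambda_j([B]_x)\big)_{j\in\I_{r_i}}$, which automatically satisfies both $h\ge f^\#$ and $\int_Y h\,d\nu\ge w-w_0+\int_Y f^\#\,d\nu$ (since $\sum_j\lambda_j([B]_x)=\tr[B]_x$), applies Theorem~\ref{teo op water} to get $f^\#_c\prec_w h$, and then uses the majorization $\big(\lambda_{i-j+1}([S_{E(\cF)^\#_\cV}]_x)+\lambda_j([B]_x)\big)_j\prec\big(\lambda_j([S_{E(\cG)}]_x)\big)_j$ (Lidskii) to pass from $h$ to the actual spectrum of $\cG$. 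Your write-up never invokes Lidskii, and without it the reduction to Theorem~\ref{teo op water} is not legitimate. The difficulty you did identify (realizability of the single global level $c$) is real but is handled by a fairly direct counting argument against Theorem~\ref{teo struc espec}; it is the optimality step, not the construction step, that requires the new idea.
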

\begin{proof}
Let $d(x)=\dim J_\cV(x)=\dim J_\cW(x)$ for $x \in \T^k$. 
For each $i\in\In\,$, 
 let $X_i=d^{-1}(i)\subseteq \T^k$, 
$ p_i=|X_i|$ (the Lebesgue measure of $X_i$) and  
$r_i=\min\{n-i,\,i\}$. Since $E(\cF)$ is a frame for $\cW$ then $\text{Spec}(\cV)=\text{Spec}(\cW)=\cup_{i\in\I_n} X_i$.
Also, for $i\in \In$ and $j\in \I_{r_i}$ we consider the measure space $(X_{ij},\cX_{ij}, |\cdot|_{ij})$, where 
$X_{ij}=X_i$, $\cX_{ij}=\cX_i$ denotes the $\sigma$-algebra of Lebesgue measurable sets in $X_i$ and  
$|\cdot|_{ij}=|\cdot|_i$ denotes the Lebesgue measure in $X_i$.
Then, using Remark  \ref{rem const de esp}, we construct the measure space
$$(Y, \mathcal Y, \nu)= \bigoplus_{i\in \In} \bigoplus_{j\in \I_{r_i}}(X_{ij}, \mathcal Y_{ij}, |\cdot|_{ij}).
$$ 
In particular, $\nu(Y)=\sum_{i\in \In} r_i \cdot p_i$. We further consider the canonical inclusion maps $\eta_{i,j}:X_{i,j}\rightarrow Y$ for $i\in\In$ and $j\in\IN{r_i}$.

\pausa
Let $\cG=\{g_i\}_{i\in\In}$ be such that $E(\cG)\in \duv$. 
We shall denote by 
$A = S_{E(\cF)^\#_\cV}  $ and $S= S_{E(\cG)}\in L(L^2(\R^k))^+\,$. 
By Proposition \ref{pro SD},   
$S=S_{E(\cF)^\#_\cV}+B= \Sast+B$, 
for some $B\in L(L^2(\R^k))^+$ which is SP, $R(B)\subset \cV$ and  
$\rk([B]_x)\leq n-d(x)$ for a.e. $x\in\T^k$. Let $i\in\In$; using Lidskii's additive inequality (see \cite{Bhat}) we get that for a.e. $x\in X_i\,$
\beq \label{eq: Lidskii ineq fibers} 
\big(\,\lambda_{i-j+1}(\Bx )+ \lambda_{j}([B]_x) \,\big)_{j\in\IN{i}}\prec 
\big(\,\lambda_j(\,[S]_x) \,\big)_{j\in\IN{i}} \, ,
\eeq while $\lambda_j(\,[S]_x)=0$ for $j\geq i+1$. Notice that 
$R([B]_x)\subset J_\cV(x)$ and $\rk([B]_x)\leq n-i$.  
Therefore  $\rk([B]_x)\leq \min\{n-i,\, i\}=r_i\,$. Then, for $x\in X_i$ we have that
\beq \label{eq: dos cachos}
\lambda_{i-j+1}(\,\Bx )+ \lambda_{j}([B]_x)
=\left\{
  \begin{array}{ccc}
\lambda_{i-j+1}(\Bx ) + \lambda_{j}([B]_x )   
 & {\rm if} & 1\leq j\leq r_i\,;  \\
\lambda_{i-j+1}(\,\Bx ) & {\rm if} & r_i+1\leq j\leq i\,.  \\
   \end{array}
	\right.
\eeq
Now, Eq. \eqref{eq: Lidskii ineq fibers} together with Eq. \eqref{eq: dos cachos} 
imply that, for any $\varphi\in\convf$: for a.e. $x\in X_i$ then  
\beq\label{eq: preparando la cosa}
\sum_{j=1}^{r_i} \varphi(\lambda_{i-j+1}(\,\Bx ) + \lambda_{j}([B]_x))+
\sum_{j=r_i+1}^i \varphi(\lambda_{i-j+1}(\,\Bx )\,)
\leq \sum_{j\in\IN{i}} \varphi(\,\lambda_j( [S]_x )\, )\ .
\eeq
With the previous notations, we now consider the measurable function 
$h :Y\rightarrow \R^+$ defined as follows: 
for $x\in Y$, let $(i\coma j)\in  \I_n\times \I_{r_i}\,$ 
and $\tilde x\in X_{i,j}=X_i$ be (uniquely determined) such that
$\eta_{i,j}(\tilde x)=x$;
in this case we set $h (x)=\lambda_{i-j+1}(\,[ A]_{\tilde x}  ) + \lambda_{j}([B]_{\tilde x})$.
If we let $w_0=\sum_{i\in\In}\|f^\#_{\cV,\,i}\|^2$ and we assume  that $E(\cG)\in 
\duvw$ then, using Eq. \eqref{eq ident tra norm} we see that 
$$ 
\int_{\T^k} \tr([B]_x)\ dx=\int_{\T^k} \tr(\, [S]_x 
- 
\Bx ) \ dx \geq w-w_0\geq 0\ .
$$ 
Consider now the measurable function $f 
:Y\rightarrow \R^+\,$ given by  
$f(x) = \lambda_{i-j+1}(\,[ A]_{\tilde x} )$ for $\tilde x\in  X_{ij}=X_i\,$, with 
$(i\coma j)\in  \I_n\times \I_{r_i}\,$ such that $\eta_{i,j}(\tilde x)=x$. Arguing as in the proof of Theorem 
\ref{pro min del PF} we get that 
\beq \label{int f}
\int_Y f\ d\nu=\sum_{i\in\In}\int_{X_i} \sum_{j\in\IN{r_i}} \lambda_{i-j+1}
(\,\Bx)\ dx\,. 
\eeq
Moreover, the previous facts show that if $E(\cG)\in 
\duvw$ we have that $h 
\geq f$ and 
$$
\int_Y h 
(x)\ d\nu =\sum_{i\in\In}\,\sum_{j\in\IN{r_i}} \int_{X_i} 
(\lambda_{i-j+1}(\,\Bx) + \lambda_j([B]_x))\  dx
\geq (w-w_0)+ \int_Y f(x)\ d\nu\igdef w'\ .
$$
Let $c=c(\frac{w'}{\nu(Y)})$ be as in Remark \ref{defi ct} and consider $f_c$ as in Definition \ref{def: wf}, both with respect to the probability space $(Y,\cY,\tilde \nu)$, where $\tilde \nu=\nu(Y)^{-1}\,\nu$. By Corollary \ref{pregunta reord1.5} and the previous remarks we see that if $\varphi\in \convf$ is non-decreasing then 
\beq\label{eq: fc opti}
\int_Y\varphi\circ f_c\ d\nu\leq \int_Y\varphi\circ h 
\ d\nu\,.
\eeq
For $j\in\In$ we consider the measurable functions $\xi_j:\text{Spec}(\cV)\rightarrow \R^+$ defined 
as follows:
for $i\in\In$ and $x\in X_i\,$,  
\beq \label{eq const xi}
\xi_j(x)=\left\{
  \begin{array}{ccr}
     f_c(\eta_{ij}(x)) = \max \{c\coma \lambda_{i-j+1}(\, \Bx)\}& {\rm if} & 1\leq j\leq r_i  \\
     \lambda_{i-j+1}(\, \Bx) & {\rm if} & r_i+1 \leq j\leq i   \ \\
     0 & {\rm if} & i+1 \leq j\leq n\,   \\
   \end{array} \quad  , 
	\right. 
\eeq 
Notice that by construction, if $\varphi\in\convf$ then
\beq\label{eq: indent varphi fc}
\int_Y \varphi\circ f_c\ d\nu=\sum_{i\in\In} \int_{X_i} \ \sum_{j\in\IN{r_i}} \varphi(\xi_j(x))\ dx\,.  
\eeq
 Using the definition of $f$ and the properties of $f_c$ from Remark \ref{defi ct}, we see that if $x\in X_i\,$, then there exist $\lambda_1^{\rm op}(x)\geq \ldots\geq \lambda_{r_i}^{\rm op}(x)\geq 0$ such that 
\beq\label{eq: rela xi0}
 \xi_j(x)=\lambda_{i-j+1}(\,\Bx) + \lambda_j^{\rm op}(x) \peso{for every}
   j\leq r_i \  .
 \eeq
Let $\mu=(\mu_j)_{j\in\N}:\T^k\rightarrow \ell^1(\N)^+$ such that: $\mu_j(x)=0$ for $j\in\N$ whenever $x\in \T^k\setminus\text{Spec}(\cV)$, while for $i\in\In$ and $x\in X_i$  then $\mu_j(x)=0$ for $j\geq i+1$ and 
\beq\label{eq: rela xi}
(\mu_j(x))_{j\in\IN{i}}= [(\xi_j(x))_{j\in\IN{i}}]^\downarrow\,.
\eeq
Putting the previous remarks together we see that $\mu=(\mu_j)_{j\in\N}$ satisfies the conditions of item 2 in 
Theorem \ref{teo struc espec}. Thus, there exists 
$\cG^{\rm op}=\{g_i^{\rm op}\}_{i\in\In}$ such that
$E(\cG^{\rm op})\in \duv$ and 
$\lambda( \,[S_{E(\cG^{\rm op})}]_x)=(\mu_j(x))_{j\in\N}$ 
for a.e. $x\in\T^k$.
In this case, if we consider Eq. \eqref{eq ident tra norm}, use Eqs. \eqref{eq: rela xi0}, \eqref{eq: rela xi} and we take $\varphi(x)=x$ in Eq. \eqref{eq: indent varphi fc} we have that 
\begin{eqnarray*}
\sum_{i\in\In}\|g_i^{\rm op}\|^2&=&\int_{\T^k} \ \sum_{j\in\N}\mu_j(x)\ dx=\sum_{i\in\In} \int_{X_i} \left( \sum_{j\in\IN{r_i}} \xi_j(x) + \sum_{j=r_i+1}^i 
\lambda_{i-j+1}(\Bx)\right)\ dx \\ &=& 
\int_Y f_c\ d\nu+ \sum_{i\in\In} \int_{X_i} \sum_{j=r_i+1}^i 
\lambda_{i-j+1}(\,\Bx)\ dx \\ &=& 
 (w-w_0) 
+ \int_{\T^k} \tr(\,\Bx)\ dx=w \ ,
\end{eqnarray*}
where we have also used the relation in Eq. \eqref{int f} above. In particular, $\cG^{\rm op}$ satisfies item 1. in the statement. Now, if $E(\cG)\in \duvw$, using Eqs. \eqref{eq: preparando la cosa}, \eqref{eq: fc opti}, \eqref{eq const xi} and \eqref{eq: rela xi} then,
\begin{eqnarray*}
P_\varphi^\cV(E(\cG))&\geq& \int_Y \varphi\circ h \ d\nu
+\sum_{i\in\In} \int_{X_i} \sum_{j=r_i+1}^i \varphi\circ \lambda_{i-j+1}(\Bx)\ dx \\ 
&\geq & \int_Y \varphi\circ f_c\ d\nu+\sum_{i\in\In} \int_{X_i} \sum_{j=r_i+1}^i \varphi\circ \lambda_{i-j+1}(\Bx)\ dx =P_\varphi^\cV(E(\cG^{\rm op}))
\end{eqnarray*} where we have also used Eq. \eqref{eq: indent varphi fc} and the fact that $\la(\,[(S_{E(\cG^{\rm op})}]_x)=\mu(x)$ for a.e. $x\in\T^k$.
\end{proof}

\begin{cor}[Essential uniqueness of optimal $\cV$-duals with norm restrictions]\label{coro unico}
With the notations of Theorem \ref{teo opt dual con norm} and its proof, assume that $\cG=\{g_i\}_{i\in\In}$ is such that $E(\cG)\in \duvw$ and that there exists a non-decreasing $\varphi\in\convfs$ such that 
\beq\label{eq: hip igual}
 P_\varphi^\cV(E(\cG^{\rm op}))=P_\varphi^\cV(E(\cG))\,.
 \eeq
Let $B\in L(L^2(\R^k))^+$ be SP, with $R(B)\subset \cV$ and such that  
$S_{E(\cG)}=S_{E(\cF)^\#_\cV}+B = \Sast +B$. Then,
\ben
\item $\sum_{i\in\In}\|g_i\|^2=w$;
\item There exist $c>0$ and measurable vector fields $v_i:\T^k\rightarrow \ell^2(\Z^k)$ for $i\in\In$ such that $\{v_i(x)\}_{i\in\IN{d(x)}}$ is an ONB of $J_\cV(x)$ for a.e. $x\in\text{Spec}(\cV)$, 
$$
[ S_{E(\cF)^\#_\cV}]_x= \Bx =\sum_{i\in \IN{d(x)}} \lambda_i(\Bx) \ v_i(x)\otimes v_i(x)\ , \quad \text{ for a.e. } x\in \text{Spec}(\cV)$$ and such that for a.e. $x\in\text{Spec}(\cV)$ we have that 
$$
[B]_x
= \sum_{i=r(x)+1}^{d(x)} \big(c- \lambda_{i}(\Bx)\,\big)^
+ \ v_{i}(x)\otimes v_{i}(x) \ ,
$$
where $r(x)=\max\{2d(x)-n,\, 0\}$, for $x\in\T^k$.
\een
The constant $c\ (=c(\frac{w'}{\nu(Y)}))>0$ does not depend on $\cG$. Moreover, in this case $P_\psi^\cV(E(\cG))=P_\psi^\cV(E(\cG^{\rm op}))$ for every non-decreasing $\psi\in\convf$.
\end{cor}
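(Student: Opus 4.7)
The plan is to trace back through the chain of inequalities in the proof of Theorem \ref{teo opt dual con norm} and exploit the equality hypothesis \eqref{eq: hip igual}, combining the equality cases of the water-filling optimality (Corollary \ref{pregunta reord1.5}) and of Lidskii's additive majorization inequality. Setting $\Sast=S_{E(\cF)^\#_\cV}$ and $S_{E(\cG)}=\Sast+B$ as provided by Proposition \ref{pro SD}, the proof of Theorem \ref{teo opt dual con norm} established a pointwise Lidskii-type bound (Eq.\ \eqref{eq: preparando la cosa}) and, upon integration, a water-filling bound (Eq.\ \eqref{eq: fc opti}); under our assumption both must be equalities.

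First I would extract information from the water-filling equality. Since $\varphi$ is strictly convex and non-decreasing, equality in the integrated bound together with Corollary \ref{pregunta reord1.5} (applied to the probability space $(Y,\cY,\tilde\nu)$) forces $h=f_c$ $\nu$-a.e.\ on $Y$. Unwinding the definitions of $h$ and $f_c$, this is equivalent to
\[
\lambda_j([B]_x)=\bigl(c-\lambda_{d(x)-j+1}(\Bx)\bigr)^{+}\quad \text{for } j\in\IN{r_{d(x)}} \peso{and a.e.} x\in\text{Spec}(\cV)\, ,
\]
which, combined with the rank bound $\rk\,[B]_x\le n-d(x)=r_{d(x)}$ from Lemma \ref{pro factoriz}, completely determines the decreasing eigenvalue list of $[B]_x$; the reindexing $i=d(x)-j+1$ then produces the formula stated in item 2. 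The constant $c=c(w'/\nu(Y))$ depends only on $\Sast$ and $w$, hence not on the particular $\cG$. The norm identity $\sum_{i\in\In}\|g_i\|^2=w$ follows at once by integrating the trace: the spectral data just obtained give $\int_{\T^k}\tr([B]_x)\,dx=w-w_0$ (with $w_0=\sum_i\|f^\#_{\cV,i}\|^2$), which via Eq.\ \eqref{eq ident tra norm} yields item 1.

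Next I would extract the geometric information from the Lidskii equality. Integrated equality combined with the pointwise bound \eqref{eq: preparando la cosa} forces pointwise equality a.e., and strict convexity of $\varphi$ then implies that the multi-sets
\[
\bigl\{\lambda_{i-j+1}(\Bx)+\lambda_j([B]_x):j\in\IN{r_i}\bigr\}\cup\bigl\{\lambda_{i-j+1}(\Bx):r_i< j\le i\bigr\}
\]
and $\{\lambda_j([S_{E(\cG)}]_x):j\in\IN{i}\}$ coincide as non-increasing sequences for a.e.\ $x\in X_i$. The equality case of Lidskii's additive inequality for positive matrices (see \cite{Bhat}) then forces $\Bx$ and $[B]_x$ to share a common ONB of eigenvectors on $J_\cV(x)$, in which $[B]_x$ is supported precisely on the subspace spanned by the eigenvectors of $\Bx$ associated with its $r_{d(x)}$ smallest eigenvalues. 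Selecting such a common eigenbasis $\{v_i(x)\}_{i\in\IN{d(x)}}$ measurably in $x$, via the measurable diagonalization machinery of \cite{RS95} already used in Lemma \ref{lem spect represent ese}, yields the representation claimed in item 2. The final equality $P_\psi^\cV(E(\cG))=P_\psi^\cV(E(\cG^{\rm op}))$ for every non-decreasing $\psi\in\convf$ is then immediate from Eq.\ \eqref{eq ident int}, since by the above the fine spectral structures of $S_{E(\cG)}$ and $S_{E(\cG^{\rm op})}$ coincide a.e.\ on $\T^k$.

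The main obstacle is the measurable simultaneous-diagonalization argument in fibers where $\Bx$ has repeated eigenvalues: there $B$ is only forced to be diagonal in \emph{some} eigenbasis of $\Sast$ compatible with its eigenspace decomposition, so the basis must be selected on the level sets of a measurable multiplicity-pattern function in the spirit of \cite{RS95}. Careful bookkeeping is required to align the eigenvalue lists of $\Bx$ and $[B]_x$ consistently across these level sets; once this is done, both items of the corollary fall out of the two equality cases above.
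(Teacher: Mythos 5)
Your outline is in the right spirit and captures both driving engines of the paper's proof: equality in the scalar water-filling bound (Corollary \ref{pregunta reord1.5} on $(Y,\cY,\tilde\nu)$ gives $h=f_c$, determining the eigenvalue lists of $[B]_x$ and hence item 1 by taking traces), and equality in the fiberwise Lidskii bound \eqref{eq: preparando la cosa} (giving, for a.e.\ $x$, a pointwise common eigenbasis of $[S_{E(\cF)^\#_\cV}]_x$ and $[B]_x$). Where you differ from the paper is precisely on the obstacle you flag at the end: how to turn the a.e.\ pointwise common eigenbases, which a priori are only defined fiberwise, into measurable vector fields $v_i(\cdot)$ when $[S_{E(\cF)^\#_\cV}]_x$ has degenerate spectrum.

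Your sketch proposes to partition $\T^k$ by the multiplicity pattern and then invoke the Ron--Shen measurable-diagonalization machinery. But diagonalizing $\Sast=S_{E(\cF)^\#_\cV}$ measurably only produces \emph{some} measurable eigenbasis of $\Sast$; when an eigenvalue has multiplicity $\ge 2$ there is no reason this basis also diagonalizes $B$, and no control over how to rotate within the eigenspace measurably to align with $B$. The paper sidesteps this entirely with a cleaner device: let $P$ be the orthogonal projection onto $\overline{R(B)}$, which is automatically a measurable SP operator since $B$ is. The Lidskii-equality structure (the pointwise relations \eqref{eq: rel entre especs} and \eqref{eq: igualdad en Lidskii}) shows that $P$ commutes with $\Sast$, that $[P]_x\Bx[P]_x+[B]_x=c\,[P]_x$, and that $(I-P)\Sast(I-P)$ carries the top eigenvalues. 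One then applies Lemma \ref{lem spect represent ese} to the two SP positive operators $P\Sast P$ and $(I-P)\Sast(I-P)$ \emph{separately}. The resulting measurable eigenfields are automatically eigenvectors of $B$ too, because on $R([P]_x)$ the operator $[B]_x$ equals $c\,[P]_x-[P]_x\Bx[P]_x$ (a function of $P\Sast P$) and on $R([I-P]_x)$ it vanishes. This dissolves the multiplicity ambiguity without any bookkeeping. So: right plan, same two equality lemmas, but your proposed fix for the measurable-selection step is a genuine gap, and the paper's trick (commuting projection $P=P_{\overline{R(B)}}$ plus two separate applications of the measurable-diagonalization lemma) is what you would need to make it rigorous.
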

\begin{proof}
We use the notions and notations from the proof of Theorem \ref{teo opt dual con norm}. 
Arguing as in the last part of the proof of Theorem \ref{teo opt dual con norm} we see that Eq. \eqref{eq: hip igual} implies that 
$$
\int_Y \varphi\circ h \ d\tilde \nu= \int_Y \varphi\circ f_c\ d\tilde \nu\, ,
$$ where $\tilde \nu=\nu(Y)^{-1}\, \nu$ is the probability measure obtained by normalization of $\nu$.
By Corollary \ref{pregunta reord1.5} we get that $h 
=f_c\,$, where $c=c(\frac{w'}{\nu(Y)})$ is as in the proof of Theorem \ref{teo opt dual con norm}. 
Therefore, for $i\in\In$ and $x\in X_i$, then 
\beq \label{eq: rel entre especs}
 \lambda_{i-j+1}(\Bx)+\lambda_{j}([B]_x)=
 \left\{
  \begin{array}{ccc}
    \max\{ \lambda_{i-j+1}(\Bx) ,\, c\}  & {\rm if} & 1\leq j\leq r_i\,;  \\
       \lambda_{i-j+1}(\Bx) & {\rm if} & r_i+1\leq j\leq i\,.  \\
   \end{array}
	\right.
 \eeq
Moreover, by Eq. \eqref{eq: preparando la cosa} and the properties of $h$ we have that 
\begin{eqnarray*}
P_\varphi^\cV(E(\cG))&=& \sum_{i\in\In}\int_{X_i} \sum_{j\in\IN{i}} 
\varphi(\lambda_j(\,[ S_{E(\cG)}]_x)\,) \ dx
\\ &\geq & \int_Y \varphi\circ h \ d\nu
+\sum_{i\in\In} \int_{X_i} \sum_{j=r_i+1}^i \varphi\circ \lambda_{i-j+1}(\Bx)\ dx 
=P_\varphi^\cV(E(\cG))\ .
\end{eqnarray*}
Therefore, we should have equality Eq. \eqref{eq: preparando la cosa} for a.e. $x\in X_i$ and $i\in\In\,$. Since $\varphi$ is strictly convex, then the majorization relation in Eq. \eqref{eq: Lidskii ineq fibers}  together with the case of equality in Lidskii's inequality (see the Appendix section in \cite{MRS14b}) imply that for $i\in\In$ and a.e. $x\in X_i$ there exists an ONB $\{z_j(x)\}_{j\in\IN{i}}$ of $J_\cV(x)$ (but not necessarily of measurable vector fields as functions of $x$) such that 
\beq \label{eq: igualdad en Lidskii}
[ S_{E(\cF)^\#_\cV}]_x=\sum_{j\in\IN{i}} \la_j(\Bx)
\ z_j(x)\otimes z_j(x) \ \text{ and } \ 
[B]_x= \sum_{j\in\IN{i}} \la_{i-j+1}([B]_x)\ z_j(x)\otimes z_j(x)\, . 
\eeq
 Let $P\in L(L^2(\R^k))^+$ denote the orthogonal projection onto $\cR=\overline{R(B)}$, so that $P$ is SP and $[ P]_x = P_{R([B]_x)}$ for every $x\in \T^k$. Let $p:\T^k\rightarrow \N$ be the measurable function given by $p(x)=\tr(\,[ P]_x)$ for $x\in\T^k$. Then, by inspection of Eqs. \eqref{eq: rel entre especs} and \eqref{eq: igualdad en Lidskii} we see that $P \,S_{E(\cF)^\#_\cV}= S_{E(\cF)^\#_\cV}\,P$,
$$
[ P]_x\,\Bx\, [ P]_x + [B]_x
=c\cdot [ P]_x \ , \quad \text{ for a.e. } x\in\T^k 
$$ and,  for $i\in\In$ and $x\in X_i\,$, 
$$  
[I-P]_x\, \Bx\, [I-P]_x
=\sum_{j=1}^{i-p(x)} \la_j(\Bx)\ z_j(x)\otimes z_j(x)\,.
 $$
 Since $a +(c-a)^+=\max\{a,\,c\}$ for $a,\,c\geq 0$, 
 these last facts imply the existence of measurable vector fields $v_i:\T^k\rightarrow \ell^2(\Z^k)$ for $i\in\In$ with the desired properties; indeed, the previous identities show that we just have to consider measurable fields of eigenvectors
of the operators $P\, S_{E(\cF)^\#_\cV}\, P$ and $(I-P) \, S_{E(\cF)^\#_\cV}\, (I-P)$, whose existence follow from Lemma
\ref{lem spect represent ese}. 

\pausa
Finally, if $\cG^{\rm op}$ is as in Theorem \ref{teo opt dual con norm}, a careful inspection of the proof of that theorem shows that 
$$
\lambda(\,[ S_{E(\cG)}]_x)
=\lambda(\,[ S_{E(\cG^{\rm op})}]_x)\ , \quad\text{ for a.e. }x\in\text{Spec}(\cV)\,,$$ which implies the optimality properties of $E(\cG)$ for a non-decreasing $\psi\in\convf$.
\end{proof}

\pausa
Notice that with the notations of Corollary \ref{coro unico} above, we see that for a.e. $x\in\T^k$ then
$$
[ S_{E(\cG^{\rm op})}]_x= \sum_{i=1}^{r(x)}\lambda_i(\Bx) \ v_i(x)\otimes v_i(x) + \sum_{i=r(x)+1}^{d(x)} \max\{\lambda_i(\Bx),\, c \} \ v_i(x)\otimes v_i(x)\, ,
$$where we have used that $a+(c-a)^+=\max\{a,\,c\}$ for $a,\,c\geq 0$. In particular, notice that 
$$
\lambda_{d(x)} [ S_{E(\cG^{\rm op})}]_x \geq  \max\{ c\coma  \lambda_{d(x)} ( [ A]_x) \}$$
which implies that the condition number of $[S_{E(\cG^{\rm op})}]_x$ is smaller than or equal to the condition number of 
$[ A]_x=[S_{E(\cF)^\#_\cV}]_x$ - both acting on $J_\cV(x)$ - for a.e. $x\in\T^k$. That is, the optimal oblique dual $E(\cG^{\rm op})$ improves the (spectral) stability of the canonical oblique dual $E(\cF)^\#_\cV=E(\cF^\#_\cV)$.

\pausa
 The representation of $[S_{E(\cG^{\rm op})}]_x$ above motivates the following construction, which also characterizes
 all elements of $\duvw$ which are minimal in the sense of Theorem \ref {teo opt dual con norm}.

\begin{fed}[Non-commutative water-filling at level $c$ in $U_\cV(E(\cG)\,)\,$]\label{def: non commut wf}
\rm 
Let $\cG=\{g_i\}_{i\in\In}$ be such that $E(\cG)$ is a frame for $\cV$ 
with frame operator $\Sast=S_{E(\cG)}\,$. By Lemma \ref{lem spect represent ese} we can consider 
 measurable vector fields $v_j:\T^k\rightarrow \ell^2(\Z^k)$ for $j\in\In$ such that 
\beq\label{def: the repre}
\Bx
=\sum_{j\in \IN{d(x)}} \lambda_j(\Bx )  \ v_j(x)\otimes v_j(x) 
\eeq
is a spectral representation of $\Bx\, $, where $\{v_j(x)\}_{j\in\IN{d(x)}}$ is an ONB of $J_\cV(x)$ (here $d(x)=\dim J_\cV(x)\leq n$), for a.e. $x\in\T^k$.

\pausa
Given $c\geq 0$ then we define the (non-commutative) water-filling of $\Sast$ at level $c$ 
with respect to the representation in Eq. \eqref{def: the repre}, denoted 
$\Sast_c\in U_\cV(E(\cG)\,)$, as the unique
positive SP operator such that  operator $R(\Sast_c)\subset \cV$ and
\beq \label{eq: defi ncwf}
[ \Sast_c](x)\igdef [ \Sast_c]_x=\sum_{i\in \IN{r(x)}} \lambda_i(\Bx) \ 
v_i(x)\otimes v_i(x)  + \sum_{i=r(x)+1}^{d(x)} \max\{\lambda_i(\Bx),\, c\} \ v_i(x)\otimes v_i(x)
\eeq 
where $r(x)=\max\{2d(x)-n,0\}$ (recall $\IN{0}=\emptyset$) for a.e. $x\in\T^k$. 
\EOE\end{fed}

\begin{rem}With the notations of Definition \ref{def: non commut wf}:
\begin{enumerate}
\item We point out that $[ {\Sast_c}]$ as described in Eq. \eqref{eq: defi ncwf} is a well defined measurable field of positive semidefinite operators that is essentially bounded.
\item Notice that in the spectral representation of $[ \Sast_c](x)$ given in Eq. \eqref{eq: defi ncwf}, the eigenvalues are not necessarily arranged in non-increasing order.
\item Finally notice that $\Sast_c\in U_\cV(E(\cG)\,)$, since 
$[\Sast_c](x) - \Bx$ is a positive operator with rank at most $d(x)-r(x)\leq n-d(x)$, for a.e. 
$x\in\T^k$. \EOE
\end{enumerate}

\end{rem}

\pausa
We end this section with the following comments: with the notions and notations of Theorem \ref{teo opt dual con norm}, let 
$\Sast =S_{E(\cF)^\#_\cV}$ and 
consider measurable vector fields $v_i:\T^k\rightarrow \ell^2(\Z^k)$, for $i\in\In$, such that 
\beq\label{def: the repre2}
 \Bx =\sum_{i\in \IN{d(x)}} \lambda_i(\Bx) \ v_i(x)\otimes v_i(x) 
\eeq
is a spectral representation of $\Bx$ with respect to an eigen-basis $\{v_i(x)\}_{i\in\IN{d(x)}}$, where $d(x)=\dim(J_\cV(x))$, for a.e. $x\in\T^k$. Let $c>0$ be such that, if $\Sast_c$ is the water-filling of $\Sast$ at level $c$ with respect to the representation in Eq. \eqref{def: the repre2} then, 
$$ 
\int_{\T^k} \tr( [ \Sast_c](x)) \ dx=w\ .$$
By construction $\Sast_c\in U_\cV(E(\cF)^\#_\cV)$ and therefore, by Proposition 
\ref{pro SD}, there exists 
$\cG_0\in \duvw$ such that $S_{\cG_0}=\Sast_c\,$. As we have already noticed, $\cG^{\rm op}$ from Theorem \ref{teo opt dual con norm} is constructed in this way;
hence, in this case we have that for every non-decreasing $\varphi\in\convf$,
$$
P_\varphi^\cV(E(\cG_0))\leq P_\varphi^\cV(E(\cG)) \ , \quad \text{ for every } \cG\in \duvw\,.
$$ Moreover, by Corollary \ref{coro unico}, any structural optimal frame $\cG\in \duvw$ (i.e. such that $\cG$ is a $P_\varphi^\cV$-minimizer in $\duvw$ for every $\varphi\in\convf$) is obtained in this way. That is, the structural optimal SG $\cV$-dual frames for $E(\cF)$ with norm restrictions are
exactly those $\cG\in \duvw$ for which their frame operators are obtained in terms of the non-commutative water-filling construction from Definition \ref{def: non commut wf}.

\section{Appendix: spectral structure of $U_\cV(E(\cG)\,)$}\label{Apendixity}

In what follows we consider a measure space $(X,\mathcal X,\rho)$, such that $X\subset \T^k$ is a Lebesgue measurable set, $\mathcal X$ denotes the $\sigma$-algebra of Lebesgue sets in $X$ and $\rho$ is the Lebesgue measure restricted to $\mathcal X$.

\begin{pro}\label{pro FP medible}
Let $G:X\rightarrow \matposn$ be a bounded measurable field of positive semidefinite matrices with associated measurable eigenvalues $\la_j:X\rightarrow \R^+$ for $j\in\In$ such that $\la_1\geq \ldots\geq \la_n$.
 Assume that the measurable functions $\beta_j:X\rightarrow \R^+$ for $j\in \IN{n-1}$ satisfy the interlacing conditions
 \beq\label{ecuac interlac}
 \la_j(x)\geq \beta_j(x)\geq \la_{j+1}(x)  \quad \quad x\text{-a.e. for}\quad j\in\IN{n-1}\ .
 \eeq
 Then there exists a measurable map $W:X\rightarrow \cM_{n,\,n-1}(\C)$ such that 
$W^*(x)\,W(x)=I_{n-1} $ and 
 \beq\label{ec hay entrelace}
\la(W^*(x)\, G(x)\, W(x)\,)=(\beta_1(x),\ldots,\beta_{n-1}(x)\,)  
\ , \peso{for a.e.}  x\in X \ .
\eeq
\end{pro}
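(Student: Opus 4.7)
The plan is to reduce to the classical (pointwise) Fan-Pall interlacing theorem and then promote the pointwise construction to a measurable one. The first step is to measurably diagonalize $G(x)$: by the same results from \cite{RS95} invoked in the proof of Lemma \ref{lem spect represent ese}, I obtain measurable vector fields $u_j:X\to \C^n$, for $j\in\In\,$, such that $\{u_j(x)\}_{j\in\In}$ is an ONB of $\C^n$ consisting of eigenvectors of $G(x)$ with $G(x)u_j(x)=\lambda_j(x)\,u_j(x)$ for a.e.\ $x\in X$. If $U(x)\in\matn$ denotes the unitary whose $j$-th column is $u_j(x)$, then $U(\cdot)$ is a measurable field of unitaries satisfying $U^*(x)G(x)U(x)=D(x)\igdef \operatorname{diag}(\lambda_1(x),\ldots,\lambda_n(x))$. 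It therefore suffices to produce a measurable field $V:X\to\cM_{n,n-1}(\C)$ with $V^*(x)V(x)=I_{n-1}$ and $\la(V^*(x)D(x)V(x))=(\beta_1(x),\ldots,\beta_{n-1}(x))$, since then $W(x):=U(x)V(x)$ solves the original problem.

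For fixed $x$, the classical Fan-Pall theorem realizes such a $V(x)$ as an isometric basis of the orthogonal complement $\xi(x)^\perp$ of a suitably chosen unit vector $\xi(x)=\sum_j\xi_j(x)\,e_j\in\C^n$: indeed, the eigenvalues of the compression of $D(x)$ to $\xi(x)^\perp$ are precisely the roots of the secular equation $\sum_{j=1}^n |\xi_j(x)|^2/(\lambda_j(x)-\mu)=0$, and matching these roots with $\beta_1(x),\ldots,\beta_{n-1}(x)$ leads (in the generic case where the $\lambda_j(x)$ are pairwise distinct) to the explicit formula
\[
|\xi_l(x)|^2=\frac{\prod_{i=1}^{n-1}(\lambda_l(x)-\beta_i(x))}{\prod_{k\neq l}(\lambda_l(x)-\lambda_k(x))}\,,\qquad l\in\In\,,
\]
which is nonnegative thanks to the sign pattern imposed by the interlacing hypothesis \eqref{ecuac interlac}. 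Taking the real nonnegative square root and setting $\xi_l(x)=|\xi_l(x)|$ gives a measurable selection of $\xi(x)$ on the subset of $X$ where the eigenvalues of $G(x)$ are all distinct.

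The main obstacle is to extend this construction to the measurable set where coincidences $\lambda_j(x)=\lambda_{j+1}(x)$ occur, because the explicit formula above exhibits $0/0$ indeterminacies there. The interlacing hypothesis \eqref{ecuac interlac} saves the day: coincidences in the $\lambda_j(x)$ force corresponding coincidences $\beta_j(x)=\lambda_j(x)$, so the offending factors cancel. I would handle this by partitioning $X$ into finitely many measurable strata indexed by the multiplicity patterns of $(\lambda_1(x),\ldots,\lambda_n(x))$; each stratum is measurable since each $\{x\in X:\lambda_i(x)=\lambda_j(x)\}$ is measurable. On every stratum the multiplicity pattern is constant and one writes down a variant of the formula above (with the cancelling factors removed and the weight $|\xi(x)|^2$ concentrated on a single representative of each multiplicity block) that is measurable and nonnegative. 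Finally, from the measurable unit vector field $\xi(x)$ I extract a measurable $V(x)$ by applying measurable Gram-Schmidt to the vectors $\{(I-\xi(x)\xi(x)^*)e_k:k\in K(x)\}$, where $K(x)\subset\In$ is an $(n-1)$-element subset chosen measurably (for instance, by splitting $X$ according to the index that maximizes $|\xi_k(x)|$ and discarding it). Then $W(x)=U(x)V(x)$ is a measurable field with orthonormal columns satisfying \eqref{ec hay entrelace}.
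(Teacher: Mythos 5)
Your construction coincides with the paper's at its core: both reduce to a measurable eigenbasis of $G(x)$ via \cite{RS95} and both select the unit vector $\xi(x)=\sum_j\gamma_j^{1/2}(x)\,u_j(x)$ with exactly the same weights $\gamma_l=\prod_i(\la_l-\beta_i)/\prod_{k\neq l}(\la_l-\la_k)$, so that the compression to $\xi(x)^\perp$ has eigenvalues $(\beta_j(x))_{j\in\IN{n-1}}$. Where you genuinely diverge is the treatment of the non-generic set. The paper never confronts the $0/0$ indeterminacies: it runs an induction on $n$, peeling off (on suitable measurable pieces of $X$) the coordinates where an equality such as $\beta_{n-1}=\la_n$ holds, and thereby reduces to the stratum where the interlacing is strict, on which the formula for $\gamma_l$ is manifestly positive and the characteristic-polynomial computation closes the argument. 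You instead stratify $X$ by the multiplicity pattern of $(\la_1,\ldots,\la_n)$ and claim a block-degenerate limit of the same formula, with the weight concentrated on one representative per eigenvalue block. That route is viable (interlacing does force $\beta_j=\la_j=\la_{j+1}$ whenever $\la_j=\la_{j+1}$, so the vanishing factors pair off), but it is the one place where your sketch leaves real work undone: you must actually write the reduced formula, verify its nonnegativity by the same sign count as in the generic case, and check that the compression then picks up $\la_l$ with multiplicity $m-1$ from each block of size $m$ in addition to the secular roots; none of this is automatic from the generic identity. Also note that coincidences among the $\beta_i$ and the $\la_j$ with the $\la_j$ pairwise distinct cause no problem (the formula just yields $\gamma_l=0$), so your stratification by eigenvalue multiplicities is indeed the right index set. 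Your final extraction of the isometry by a measurable Gram--Schmidt on $\{(I-\xi\xi^*)e_k\}_{k\in K(x)}$, discarding a measurably chosen index with $\xi_{k}(x)\neq 0$, is a sound alternative to the paper's measurable eigenbasis of the projection $I-\xi(x)\otimes\xi(x)$. In short: correct strategy, same key formula, but your degenerate-case step is a genuine sketch where the paper's induction is a complete (and arguably cleaner) argument.
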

\begin{proof} 
We argue by induction on $n$  (the size of $G$). Notice that 
$\beta_1\geq \ldots\geq \beta_{n-1}$ by Eq. \eqref{ecuac interlac}. Using the results of  \cite{RS95}, 
we can consider measurable vector fields
$u_j:X\rightarrow \C^n$ for $j\in\In$ such that $\{u_j(x)\}_{j\in\In}$ is an ONB of eigenvectors of $G(x)$ for a.e. $x\in X$.

\pausa 
Assume first that $\beta_{n-1}=\la_n$. Set $G'(x)=V(x)^* \, G(x)\,V(x)$ where $V(x)$ is the $n\times (n-1)$ matrix whose columns are the vectors $u_1(x),\ldots,u_{n-1}(x)$, for $x\in X$. Then, $G'$ is a bounded measurable field of (diagonal) positive semidefinite matrices of size $n-1$ with measurable eigenvalues $\la_j:X\rightarrow \R^+$ for $j\in\IN{n-1}$. If we assume that we can find  a measurable function $Z:X\rightarrow \cM_{n-1,\,n-2}(\C)$ such that $Z^*(x)Z(x)=I_{n-2}$ and 
$\la(Z^*(x) G'(x)\, Z(x))=(\beta_1(x),\ldots,\beta_{n-2}(x))$ for a.e. $x\in X$,  we let 
$$ W(x)=\begin{pmatrix}
Z(x) & 0_{n-1}\\ 0_{n-2}^t
& 1 \end{pmatrix} \ , \quad \text{ for } x\in X\,.
$$ Then, it is easy to see that $W:X\rightarrow \cM_{n,n-1}(\C)$ has the desired properties.
By iterating the previous argument and considering a convenient partition of $X$ into measurable sets, we can assume without loss of generality that 
$$ 
 \la_j(x)>\beta_j(x)>\la_{j+1}(x) \ ,  \quad \text{for a.e. } x\in X\ , \quad j\in\IN{n-1}\,.
$$
In this case we set 
$$
\gamma_j(x)=\frac{\prod\limits_{i\in\IN{n-1}}(\lambda_j(x)-\beta_i(x))}
{\prod\limits_{k\neq j}(\lambda_j(x)-\lambda_k(x))} \ , \quad \text{for } x\in X \ , \quad j\in\In\, .$$ 
The previous assumptions (strict interlacing inequalities) imply that $\gamma_j(x)> 0$ is defined for a.e. $x\in X$; moreover, the functions $\gamma_j:X\rightarrow \R^+$ are measurable for $j\in\In$.

\pausa 
Set $\xi_j=\gamma_j^{1/2}:X\rightarrow \R^+$ for $j\in\In$, let $v=\sum_{j\in\In} \xi_j \, u_j:X\rightarrow \C^n$ 
and let $P:X\rightarrow \matposn$ 
given by $P(x) = I- P_{v(x)}$ (the orthogonal projection onto $\{v(x)\}\orto$, notice that $v(x)\neq 0$ a.e.). Let $p_x(t)\in\R[t]$ denote the characteristic polynomial
of $P(x)\,G(x)\,P(x)$. Then, a well known argument in terms of alternate tensor products (see \cite{Bhat}) shows that $$
p_x(t)=t\,\sum_{j\in\In}\gamma_j(x)\ \prod_{k\neq j}(t-\la_k(x)) \implies 
p_x(\la_j(x))=\la_j(x) \prod_{i\in\IN{n-1}}(\la_j(x)-\beta_i(x)) \,
$$
for a.e. $x\in X$,  $j\in\In$ and $p_x(0)=0$. Therefore,
$$
p_x(t)=t\, \prod_{j\in\IN{n-1}}(t-\beta_j) \peso{and} \sum_{j\in\In} \xi_j^2(x)=1 \ , \quad \text{for a.e. }x\in X \ ,
$$ 
by comparing the leading coefficients of the two representations of the polynomial. This last normalization condition shows, in particular, that $P(x)=I- v(x)\otimes v(x)$ for $x\in X$ a.e. and hence $P$ is a measurable function.

\pausa 
Finally, let $\{w_j:X\rightarrow \C^n\}_{j\in\In}$ be a measurable ONB of eigenvectors functions for $P$ such that $P(x)\,w_n(x)=0$ for a.e. $x\in X$. Set $W: X\rightarrow \cM_{n,\,n-1}(\C)$ such that $W(x)$ is the $n\times n-1$ matrix whose columns are the vectors $w_1(x),\ldots,w_{n-1}(x)$; then $W$ is a measurable function with the desired properties.
\end{proof}

\begin{lem}\label{eleccion medible}
Let $\la_j: X\rightarrow \R^+$ for $j\in\In$ be measurable functions such that $\la_1\geq \ldots\geq \la_n$.
 Let $d\in\I_{n-1}$ and let $\beta_j: X\rightarrow \R^+$ for $j\in \IN{d}$ be measurable functions such that $\beta_1\geq \ldots\geq \beta_d$ and such that they satisfy the interlacing inequalities 
 \beq\label{ecuac interlac2}
 \la_j(x)\geq \beta_j(x)\geq \la_{n-d+j}(x)  \ , \quad \text{for .a.e. } x\in X \ , \quad j\in\IN{d}\ .
 \eeq
Then, there exist measurable functions 
$\gamma_{i\coma j}: X\rightarrow \R^+$ for $0\leq i\leq n-d$ and $j\in\IN{n-i}$
such that:
\ben
\item $\gamma_{0\coma j}=\la_j$ for $j\in\In$ and $\gamma_{n-d\coma j}=\beta_j$ for $j\in\IN{d}$;
\item For $0\leq i\leq n-d$ then $\gamma_{i\coma j}(x)\geq \gamma_{i\coma j+1}(x)$ for $j\in\IN{n-i-1}$,
for a.e. $x\in X$;
\item For $0\leq i\leq n-d-1$ then $\gamma_{i\coma j}(x)\geq \gamma_{i+1\coma j}(x)\geq \gamma_{i\coma j+1}(x)$ for $j\in\IN{n-i-1}$,
for a.e. $x\in X$.
\een
\end{lem}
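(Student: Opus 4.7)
The plan is to give an explicit measurable closed-form for $\gamma_{i,j}$ in terms of $\la$ and $\beta$, from which measurability and all three listed properties follow by direct verification. The key idea is to pad the sequence $\beta$ outside $\IN{d}$ so that its index range matches that of $\la$, and then to define $\gamma_{i,j}$ as a max--min of shifted copies. Concretely, I extend $\beta$ to an integer-indexed family by setting $\tilde\beta_k := \la_1$ for $k \leq 0$, $\tilde\beta_k := \beta_k$ for $1 \leq k \leq d$, and $\tilde\beta_k := 0$ for $k \geq d+1$. The hypothesis $\la_1 \geq \beta_1$, the monotonicity of $\beta$, and $\beta_d \geq 0$ together imply that $k \mapsto \tilde\beta_k(x)$ is pointwise non-increasing on $\Z$, while each $\tilde\beta_k$ is measurable. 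For $0 \leq i \leq n-d$ and $j \in \IN{n-i}$ I then define
$$
\gamma_{i,j}(x) \;:=\; \max\bigl\{\la_{j+i}(x),\; \min\{\la_j(x),\; \tilde\beta_{j+i-(n-d)}(x)\}\bigr\},
$$
which is measurable as the $\min/\max$ of finitely many measurable functions.

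For item 1, at $i = 0$ and $j \in \IN{n}$ the index $j - (n-d)$ is $\leq 0$ when $j \leq n-d$, and then $\tilde\beta_{j-(n-d)} = \la_1 \geq \la_j$; otherwise it lies in $\IN{d}$ and the hypothesis \eqref{ecuac interlac2} yields $\tilde\beta_{j-(n-d)} = \beta_{j-(n-d)} \geq \la_j$. In both cases $\min\{\la_j, \tilde\beta_{j-(n-d)}\} = \la_j$, whence $\gamma_{0,j} = \la_j$. At $i = n-d$ and $j \in \IN{d}$ the index becomes $j$, so $\tilde\beta_j = \beta_j$, and the interlacing $\la_j \geq \beta_j \geq \la_{n-d+j}$ forces $\min\{\la_j,\beta_j\}=\beta_j$ and $\max\{\la_{n-d+j},\beta_j\}=\beta_j$, giving $\gamma_{n-d,j} = \beta_j$.

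For items 2 and 3, both $k \mapsto \la_k$ and $k \mapsto \tilde\beta_k$ are non-increasing by construction. Replacing $(i,j)$ by $(i+1,j)$ shifts the indices $j+i$ and $j+i-(n-d)$ up by one and leaves $\la_j$ unchanged, weakly decreasing both arguments of the outer $\max$, hence $\gamma_{i,j} \geq \gamma_{i+1,j}$. Comparing $\gamma_{i+1,j}$ with $\gamma_{i,j+1}$, note that $j+i+1 = (j+1)+i$, so the first argument of $\max$ and the $\tilde\beta$-argument of the inner $\min$ are identical; only $\la_j$ drops to $\la_{j+1}$, which gives $\gamma_{i+1,j} \geq \gamma_{i,j+1}$. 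Item 2 then follows from item 3 by composing $\gamma_{i,j} \geq \gamma_{i+1,j} \geq \gamma_{i,j+1}$. The only delicate point — and hence the main obstacle — is choosing the padding of $\tilde\beta$ so that it is globally non-increasing and simultaneously yields the correct boundary values at $i = 0$ and $i = n-d$; this is precisely where the endpoint inequalities $\la_1 \geq \beta_1$ and $\beta_d \geq \la_n \geq 0$ (implicit in \eqref{ecuac interlac2}) are used. Once the padding is fixed, the rest is a routine monotonicity manipulation of $\min$ and $\max$.
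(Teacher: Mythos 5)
Your construction is correct, and it is genuinely different from the paper's argument. The paper proceeds by decreasing induction on $d$: from the row $\beta=(\beta_j)_{j\in\IN{d}}$ it builds only the next row up, via $\alpha_j=\max\{\beta_j,\la_{n-(d+1)+j}\}$ for $j\le d$ and $\alpha_{d+1}=\min\{\beta_d,\la_{d+1}\}$, checks the two interlacing relations for that single step, and then invokes the inductive hypothesis on the pair $(\la,\alpha)$. You instead give one closed-form formula $\gamma_{i,j}=\max\bigl\{\la_{j+i},\min\{\la_j,\tilde\beta_{j+i-(n-d)}\}\bigr\}$ valid for all rows simultaneously, with a padded non-increasing extension $\tilde\beta$ of $\beta$. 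I checked the three properties: the boundary values at $i=0$ and $i=n-d$ come out right (at $i=0$ the outer $\max$ with $\la_j$ already forces $\gamma_{0,j}=\la_j$, so the inequality $\tilde\beta_{j-(n-d)}\ge\la_j$ you verify is not even needed there); the two interlacing inequalities of item 3 follow from monotonicity of $\max$ and $\min$ exactly as you say; and the index $j+i-(n-d)$ never exceeds $d$, so your zero-padding above $d$ is never invoked (harmless). The two constructions do not produce the same intermediate rows in general (e.g.\ your $\gamma_{n-d-1,1}=\la_1$ while the paper's is $\max\{\beta_1,\la_{n-d}\}$), but both are valid. What your approach buys is a uniform, induction-free verification and an explicit formula; what the paper's buys is that each step is the minimal "one new entry" insertion, which mirrors how the lemma is consumed in Theorem \ref{teo Fan-Pall medible} (one compression at a time via Proposition \ref{pro FP medible}). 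One cosmetic remark: item 2 for the bottom row $i=n-d$ is not obtained by composing item 3 (there is no row $n-d+1$); it holds because $\gamma_{n-d,j}=\beta_j$ is non-increasing by hypothesis, or directly because your formula is non-increasing in $j$ for fixed $i$.
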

\begin{proof}We argue by (decreasing) induction in terms of $d$. Notice that the statement is trivially true if $d=n-1$. 
Assume that the result is true for $d+1$ interlacing measurable functions for some 
$d\in\IN{n-2}\,$. Given the measurable functions $\beta_j$ for $j\in\IN{d}$ as above, we 
shall construct measurable functions $\alpha_j: X\rightarrow \R^+$ for $j\in\IN{d+1}$ such that 
\beq\label{ec hip1}
\la_j\geq \alpha_j\geq \la_{n-(d+1)+j} \peso{for} j\in\IN{d+1} \py 
\alpha_j\geq \beta_j\geq \alpha_{j+1} \peso{for} j\in\IN{d}\, ,
\eeq and hence $\alpha_1\geq \ldots\geq \alpha_{d+1}\,$.  
Notice that the lemma would be a consequence of this construction and the inductive hypothesis (where the maps $\al_j$ play the role of $\gamma_{n-d+1\coma j}$ for $j \in \IN{d+1}$). 

\pausa
First notice that by the interlacing inequalities in Eq. \eqref{ecuac interlac2} we have that 
\beq \label{ec cond1}
\min \{ \la_{r+1} \coma \beta_r\}\geq \max\{\beta_{r+1}\coma \la_{n-d+r}\} \ , \quad \text{for $r\in \IN{d-1} $ \ and \ 
for a.e. }x\in X\,.
\eeq
We define $\alpha_j:X\rightarrow \R^+$, for $j\in\IN{d+1}$, as follows:
\beq \label{cons gamma}
\alpha_j:=\left\{
  \begin{array}{ccc}
     \max\{\beta_j\coma \la_{n-(d+1)+j}\} & {\rm if} & 1\leq j\leq d \, ; \\
    \min\{\beta_{d}\coma \la_{d+1}\} & {\rm if} & j=d+1 \, .\\ 
   \end{array}
	\right.
\eeq By construction the functions $\alpha_j$ are measurable, and it is easy to check (by using Eq. \eqref{ec cond1}) that 
they satisfy Eq. \eqref{ec hip1}. 
\end{proof}

\pausa
The following result is the Fan-Pall interlacing inequalities theorem for measurable fields of positive operators.
\begin{teo}\label{teo Fan-Pall medible}
Let $G: X\rightarrow \matposn$ be a bounded measurable field of positive semidefinite matrices with associated measurable eigenvalues $\la_j: X\rightarrow \R^+$ for $j\in\In$ such that $\la_1\geq \ldots\geq \la_n$.
 Let $d\in\I_{n-1}$ and let $\beta_j: X\rightarrow \R^+$ for $j\in \IN{d}$ be measurable functions such that $\beta_1\geq \ldots\geq \beta_d$. Then the following conditionas are equivalent:   
\ben
\item  \label{ecuac interlac teo}
$\la_j(x)\geq \beta_j(x)\geq \la_{n-d+j}(x)\ ,   \quad \text{for a.e. }x\in X\ , \quad j\in\IN{d}\, $.
\item \label{ec hay entrelace teo} There exists a projection valued measurable function $P: X\rightarrow \matposn$ such that 
$$
\rk\,P(x)=d \peso{and} \la(P(x)\, G(x)\, P(x))=(\beta_1(x),\ldots,\beta_{d}(x),0_{n-d}) \ , \quad \text{for a.e. }x\in X\,.
$$
\een
\end{teo}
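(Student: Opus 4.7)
The plan is to establish the two implications separately. For $(\ref{ec hay entrelace teo})\Rightarrow(\ref{ecuac interlac teo})$, the argument is essentially the classical Cauchy interlacing theorem applied pointwise: for a.e. $x\in X$, the compression $P(x)G(x)P(x)$ viewed on the $d$-dimensional range of $P(x)$ has eigenvalues that interlace those of $G(x)$, yielding exactly the inequalities in \eqref{ecuac interlac teo}. No measurability issue arises here since the conclusion is a pointwise numerical inequality between the given measurable functions $\lambda_j$ and $\beta_j$.

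The substantive direction is $(\ref{ecuac interlac teo})\Rightarrow(\ref{ec hay entrelace teo})$, and the strategy is to peel off one dimension at a time. Using Lemma \ref{eleccion medible} we first produce measurable interpolating families $\gamma_{i,j}:X\to\R^+$ for $0\le i\le n-d$, $j\in\IN{n-i}$, with $\gamma_{0,\cdot}=\lambda$, $\gamma_{n-d,\cdot}=\beta$, and such that $\gamma_{i,\cdot}$ interlaces $\gamma_{i+1,\cdot}$ in the sense of Eq. \eqref{ecuac interlac}. Then we apply Proposition \ref{pro FP medible} iteratively: at step $i$ we have a bounded measurable field $G_i:X\to \mathcal{M}_{n-i}(\C)^+$ with eigenvalues $(\gamma_{i,j})_{j\in\IN{n-i}}$ (in decreasing order), and the proposition yields a measurable $W_i:X\to \mathcal{M}_{n-i,\,n-i-1}(\C)$ with $W_i^*W_i=I_{n-i-1}$ and $\lambda(W_i^*G_iW_i)=(\gamma_{i+1,j})_{j\in\IN{n-i-1}}$. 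Set $G_{i+1}=W_i^*G_iW_i$; after $n-d$ steps we end up with $G_{n-d}$ of size $d\times d$ whose eigenvalues coincide with $(\beta_j)_{j\in\IN d}$.

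To obtain the projection, let $V(x)=W_0(x)\,W_1(x)\cdots W_{n-d-1}(x)\in\mathcal{M}_{n,d}(\C)$. Then $V$ is a product of measurable matrix-valued maps, hence measurable, and $V^*V=I_d$ since each factor is an isometric embedding. Therefore $P(x):=V(x)V^*(x)$ is a measurable orthogonal projection of rank $d$, and
\[
\lambda\bigl(P(x)G(x)P(x)\bigr)=\bigl(\lambda(V^*(x)G(x)V(x)),\,0_{n-d}\bigr)=(\beta_1(x),\ldots,\beta_d(x),0_{n-d}).
\]

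The main technical obstacle is ensuring that every choice made along the iteration is measurable in $x$: measurability of the eigenvectors of $P(x)G(x)P(x)$ in the key construction inside Proposition \ref{pro FP medible} requires either appealing to the results of \cite{RS95} for measurable spectral decompositions or, where strict interlacing fails on subsets of positive measure, passing to a countable measurable partition and handling the degenerate cases via the explicit block construction in the opening of that proof. The telescoping step here is designed precisely to reduce the proof of Theorem \ref{teo Fan-Pall medible} to this single measurable selection statement, which is where all the analytic content is concentrated.
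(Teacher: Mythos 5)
Your proposal is correct and follows essentially the same route as the paper: the converse via pointwise Cauchy interlacing, and the main direction by combining Lemma \ref{eleccion medible} with an iterated application of Proposition \ref{pro FP medible} to peel off one dimension at a time, then setting $P=VV^*$ for the accumulated isometry $V$. The only difference is cosmetic indexing of the factors $W_i$.
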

\begin{proof} Assume first that the functions $\{\beta_j\}_{j\in\IN{d}}$ satisfy the interlacing 
inequalities in item \ref{ecuac interlac teo}. Let $\gamma_{i\coma j}: X\rightarrow \R^+$ for $0\leq i\leq n-d$ and 
$j\in\IN{n-i}$
be measurable functions as in By Lemma \ref{eleccion medible}. 
By Proposition \ref{pro FP medible} there exists a measurable function 
$W_1: X\rightarrow \cM_{n \coma \,n-1}(\C)$ such that 
$$ 
W_1(x)^*W_1(x)=I_{n-1}\py  \la(W_1(x)^* G\,W_1(x))=(\gamma_{1\coma 1}(x),\ldots,\gamma_{1\coma \,n-1}(x)) 
\ , \quad  \text{for a.e. }x\in X\ .
$$ 
Arguing as before, using Proposition \ref{pro FP medible} we can construct for $2\leq i\leq n-d$
measurable functions $W_i: X\rightarrow \cM_{n-i+1 \coma n-i}(\C)$ such that 
$W_i(x)^*W_i(x)=I_{n-i}$ for a.e. $x\in X$ and 
$$
\la(W_i(x)^*\cdots W_1(x)^* G(x) \,W_1(x)\cdots W_i(x))
=(\gamma_{i\coma 1}(x),\ldots,\gamma_{i\coma n-i}(x)) \ , \quad  
\text{for a.e. }x\in X\ .
$$ 
Let $W=W_1\cdots W_{n-d}: X \rightarrow \cM_{n\coma d}(\C)$  which is measurable by construction and notice that 
$$
W^*(x)W(x)=I_d \py \la(W(x)^*G(x)\,W(x))=(\beta_1(x),\ldots,\beta_d(x)) \ , \quad  \text{for a.e. }x\in X\ .
$$
Hence, if we set $P=WW^*: X\rightarrow \matn$ then $P$ is a measurable field of projections with the desired properties.

\pausa
Conversely, assume that there exists a projection valued measurable function $P: X\rightarrow \matposn$ satisfying 
item \ref{ec hay entrelace teo}. Then item \ref{ecuac interlac teo} is a straightforward consequence of the so-called 
Cauchy  interlacing inequalities from matrix analysis (see for example \cite{Bhat}).
\end{proof}

\pausa
Let $\cG=\{g_i\}_{i\in\In}$ be such that $E(\cG)$ is a frame for the SI subspace $\cV$, with frame operator $\Sast=S_{E(\cG)}$. 
Recall that (see Definition \ref{el conjunto U})
 $$U_\cV(E(\cG)\,)=\{\Sast + B:  B\in L(L^2(\R^k))^+ \text{ is SP},\ R(B)\subset \cV,\
\rk([B]_x)\leq n-d(x) \, , \, \text{for a.e. } x\in \T^k \}\, . 
$$
Using the Fan-Pall inequalities for measurable fields of matrices we can now describe the fine spectral structure of the elements in $U_\cV(E(\cG)\,)$

\begin{teo}\label{estructdelU}
Let $\cV$ be a SI subspace in $L^2(\R^k)$ with $\text{Spec}(\cV)\inc X$ and let $\cG=\{g_i\}_{i\in\In}$ 
be such that $E(\cG)$ is a frame for $\cV$ with frame operator $\Sast=S_{E(\cG)}$.  Let $d: X\rightarrow \N$ 
be the measurable function given by $d(x)=\dim J_\cV(x)$, for $x\in \text{Spec}(\cV)$, and let 
$m(\cdot)=2d(\cdot)-n$. Given a measurable function $\mu:X\rightarrow (\ell^1_+(\N)^+)\da$ (decreasing sequences)
 the following are equivalent:
 \ben
 \item There exists $C\in U_\cV(E(\cG)\,)$ such that 
$\mu (x) =  \la (\Bx)$  for a.e. $x\in X$;

\item $\mu(x) =0$ for every $x\notin\text{Spec}(\cV)$.  If $x\in\text{Spec}(\cV)$ then   $\mu_i(x)=0$ for $i\geq  d(x)+1$ and 
\begin{enumerate}
\item in case that $m(x)\leq 0$, then  $\mu_i(x)\geqp  \la_i(\Bx) $ for  $i\in \I_{d(x)}$;
\item in case that $m(x)\ge 1$, then  $\mu_i(x)\geqp  \la_i(\Bx)$ for $i\in \I_{d(x)}$
and  
$$
\mu_{n- d(x)+i}(x) = \mu_{d(x)-m(x)+i}(x)\le  \la_i(\Bx) \peso{for} i\in \I_{m(x)}  \ .
$$
\een
 \een
\end{teo}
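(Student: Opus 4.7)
The argument reduces to a fiberwise analysis via the correspondence between SP operators on $L^2(\R^k)$ and essentially bounded measurable fields of operators on $\ell^2(\Z^k)$. Indeed, $C \in U_\cV(E(\cG))$ if and only if $[C]_x = \Bx + [B]_x$ for a.e. $x$, with $[B]_x \geq 0$ supported on $J_\cV(x)$ and of rank at most $n - d(x)$; the theorem is thus a measurable characterization of the possible fine spectra of $[C]_x$.

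For $(1)\Rightarrow(2)$, the vanishing $\mu_i(x) = 0$ for $i \geq d(x)+1$ follows from $R([C]_x) \subset J_\cV(x)$. Weyl's monotonicity for positive perturbations gives $\mu_i(x) \geq \lambda_i(\Bx)$ for $i \in \I_{d(x)}$. When $m(x) \geq 1$, the kernel of $[B]_x$ inside $J_\cV(x)$ has dimension at least $m(x)$; on this subspace $[C]_x$ and $\Bx$ coincide, so two applications of Cauchy's interlacing inequalities (one for $\Bx$ and one for $[C]_x$ compressed to this kernel) yield $\mu_{n-d(x)+i}(x) \leq \lambda_i(\Bx)$ for $i \in \I_{m(x)}$.

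For $(2)\Rightarrow(1)$, partition $\T^k$ into countably many measurable sets on which $d(x)$ is constant and work on each separately; write $r = n - d(x)$ and $m = m(x)$. In the easy case $m \leq 0$, Lemma \ref{lem spect represent ese} supplies a measurable ONB $\{v_i(x)\}_{i\in\I_{d(x)}}$ of eigenvectors of $\Bx$ on $J_\cV(x)$, and we set
\[
[B]_x = \sum_{i\in\I_{d(x)}} \bigl(\mu_i(x) - \lambda_i(\Bx)\bigr)\,v_i(x) \otimes v_i(x),
\]
which is positive (by Weyl) of rank at most $d(x) \leq r$ and produces the right spectrum for $[C]_x$. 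In the case $m \geq 1$, the construction is iterative: we build $[B]_x$ as a sum of $r$ measurable rank-one positive perturbations, starting from $\lambda^{(0)}(x) = \lambda(\Bx)$ and ending at $\lambda^{(r)}(x) = \mu(x)$. An explicit measurable recipe for the intermediate spectra is $\lambda^{(k)}_i(x) = \max\{\lambda^{(k-1)}_i(x),\mu_{r-k+i}(x)\}$ for $i \in \I_{m+k}$ and $\lambda^{(k)}_i(x) = \lambda^{(k-1)}_i(x)$ otherwise; a direct check shows that consecutive sequences obey the rank-one Cauchy interlacing and that each pair $(\lambda^{(k)}(x),\mu(x))$ satisfies the hypothesis of the theorem with $r$ replaced by $r-k$. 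At every stage the perturbation direction is determined measurably from the classical secular equation applied to the eigendecomposition of the previous operator (furnished by Lemma \ref{lem spect represent ese}, in the same spirit as the construction carried out in the proof of Proposition \ref{pro FP medible}).

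The main obstacle is the measurable coordination required in the case $m(x) \geq 1$: although the pointwise existence of $[B]_x$ is a standard additive Fan--Pall result, producing it as a measurable field demands the simultaneous selection of eigenvectors, intermediate spectra, and perturbation directions across $\T^k$. Theorem \ref{teo Fan-Pall medible} is precisely the tool that makes these selections available, certifying the existence of the measurable auxiliary projections onto the relevant eigensubspaces at every step of the induction.
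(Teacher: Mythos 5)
Your proposal is correct in substance but takes a genuinely different route from the paper's, in both directions. For $(1)\Rightarrow(2)$ the paper does not argue directly on the fibers of $C=A+B$: it realizes $C=S_{E(\cG+\cZ)}$ via Lemma \ref{pro factoriz}, passes to the $n\times n$ Gramian fields $[T_{E(\cG+\cZ)}^*T_{E(\cG+\cZ)}]_x$, observes that compressing by the rank-$d(x)$ projection onto $R(T_{\Gamma\cG(x)}^*)$ recovers the Gramian of $\cG$, and then applies the measurable Fan--Pall theorem once to that compression. Your argument --- Weyl monotonicity for the lower bounds plus Cauchy interlacing on $K=\ker[B]_x\cap J_\cV(x)$, whose dimension is at least $m(x)$ --- is more elementary and bypasses Lemma \ref{pro factoriz} entirely; it is valid (the point being that $P_K[B]_xP_K=0$ because a positive operator that vanishes on $K$ has vanishing compression to $K$, so $P_K[C]_xP_K=P_K[A]_xP_K$). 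For $(2)\Rightarrow(1)$ the paper again works with the $n\times n$ diagonal field $D_\mu$ with diagonal $(\mu_1,\ldots,\mu_d,0_{n-d})$, uses Theorem \ref{teo Fan-Pall medible} to select a measurable rank-$d$ projection $P$ with $\la(PD_\mu P)=(\la_1([A]_x),\ldots,\la_d([A]_x),0_{n-d})$, and obtains $B$ in one shot from the splitting $D_\mu=D_\mu^{1/2}PD_\mu^{1/2}+D_\mu^{1/2}(I-P)D_\mu^{1/2}$ after conjugating by a measurable unitary. Your single-step construction in the case $m\leq 0$ is correct and simpler than the paper's.

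In the case $m\geq 1$, your staircase of intermediate spectra $\lambda^{(k)}_i=\max\{\lambda^{(0)}_i,\mu_{r-k+i}\}$ is combinatorially sound (the consecutive interlacings and the terminal identity $\max\{\lambda_i([A]_x),\mu_i\}=\mu_i$ do check out, the latter using $\mu_i\geq\la_i([A]_x)$). The one step you lean on, however, is a \emph{measurable} realization of a prescribed rank-one additive spectral update, and this is not what Theorem \ref{teo Fan-Pall medible} provides: that theorem selects measurable projections for compressions $PGP$, not directions $v$ for additive perturbations $G+t\,v\otimes v$. To close this you would need an additive analogue of Proposition \ref{pro FP medible} (same secular-equation weights $\gamma_j$, same partition of $X$ into strict and degenerate cases to keep everything measurable). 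This is routine but it is a genuinely missing lemma in your write-up, and it is precisely the work the paper's $D_\mu^{1/2}PD_\mu^{1/2}$ device is designed to avoid.
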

\begin{proof} First notice that by considering a convenient finite partition of $X$ into measurable sets we can assume, without loss of generality, that $d(x)=d\in \N$ for a.e. $x\in \text{Spec}(\cV)$.   

\pausa
Let $C\in U_\cV(E(\cG)\,)$, and assume that 
$\mu=\lambda(\, [C])$. By hypothesis, there exists
$B\in L(L^2(\R^k))^+$ SP, with $R(B)\subset \cV$, 
$\rk([B]_x)\leq n-d(x)$ for a.e. $x\in \T^k$, such that $C=\Sast+B$.	
By Lemma \ref{pro factoriz} there exists $\cZ=\{z_i\}_{i\in\In}\in\cV^n$ such that $ T_{E(\cG)}\,T_{E(\cZ)}^*=0$ and  $B=S_{E(\cZ)}$.		If we let $\cG+\cZ=\{g_i+z_i\}_{i\in\In}$ then $T_{E(\cG+\cZ)}=T_{E(\cG)}+T_{E(\cZ)}$ 
and 
$$
S_{E(\cG+\cZ)}= T_{E(\cG+\cZ)} \, T_{E(\cG+\cZ)}^*=
S_{E(\cG)}+ S_{E(\cZ)}= \Sast + S_{E(\cZ)} = C
$$
 with $R([ C]_x)=J_\cV(x)$ and $\dim J_\cV(x)=d\leq n$. Then, 
$$
\lambda_j\big(\, [T_{E(\cG+\cZ)}^*\  T_{E(\cG+\cZ)}]_x\big)=\mu_j(x) \quad \text{ for $j\in\IN{d}$ and a.e. } x\in\text{ Spec}(\cV)\ .
$$
Moreover, if we let $P:\text{ Spec}(\cV)\rightarrow \matn^+$ be the projection valued measurable function 
such that $P(x)$ is the orthogonal projection onto span $\{\Gamma \cG(x)\}
=R(T_{\Gamma \cG(x)}^*)$ then, using again that 
$ T_{E(\cG)}\,T_{E(\cZ)}^*=0$ we see that 
$$ P(x) \ (\, [T_{E(\cG+\cZ)}^*\  T_{E(\cG+\cZ)}]_x \, )\ P(x)
=[T_{E(\cG)}^*\  T_{E(\cG)}]_x\ , \quad \text{ for a.e. } x\in\text{ Spec}(\cV)\,.$$
Since $\rk(P(x))=d\leq n$ and 
$$ 
\lambda_j(\,[T_{E(\cG)}^*\  T_{E(\cG)}]_x)
= \lambda_j(\,[T_{E(\cG)}\ T_{E(\cG)}^* ]_x)=\lambda_j(\Bx)
\ , \quad \text{ for $j\in\IN{d}$ and a.e. } x\in\text{ Spec}(\cV) 
$$ 
then, using Theorem \ref{teo Fan-Pall medible} we conclude that
that the Fan-Pall inequalities hold between 
$$
(\mu_1(x),\ldots,\mu_d(x),0_{n-d}) \quad  \text{and} \quad 
(\lambda_1(\Bx)\coma \ldots \coma \lambda_d(\Bx) \coma 0_{n-d})\ .
$$  
A careful inspection of these inequalities for the previous vectors shows that the inequalities in items 2.a and 2.b. above hold (according to the relation between $n$ and $d$).

\pausa
Conversely, let $\mu:\text{Spec}(\cV)\rightarrow \ell^1(\N)^+$ satisfy the conditions in item 2 and let $D_\mu(\cdot):\text{Spec}(\cV)\rightarrow \matn^+$ be the measurable field of positive semidefinite matrices such that $D_\mu(x)$ is the diagonal matrix with main diagonal $(\mu_1(x),\ldots,\mu_d(x),0_{n-d})$ for $x\in X$. Then, by Theorem \ref{teo Fan-Pall medible} there exists a projection valued measurable function $P: \text{Spec}(\cV)\rightarrow \matposn$ such that   $$
\tr(P(x))=d \peso{and} \la(P(x) \ D_\mu(x)\ P(x))
=(\lambda_1(\Bx)\coma \ldots \coma \lambda_d(\Bx) \coma 0_{n-d})
\in(\R^+)^n\ .
$$ 
In this case we see that 
$$
\lambda(D_\mu^{1/2}\, P(x)\,D_\mu^{1/2})
=(\lambda_1(\Bx)\coma \ldots \coma \lambda_d(\Bx) \coma 0_{n-d}) \in(\R^+)^n\,.
$$
Let $D_\lambda(x)$ be the diagonal  matrix with main diagonal 
$(\lambda_1(\Bx)\coma \ldots \coma \lambda_d(\Bx) \coma 0_{n-d})$. 
By taking an appropriate measurable field of unitary matrices 
$U(x):\text{Spec}(\cV)\rightarrow \matn$ we conclude that
 \beq\label{eq: falta poquito1} 
D_\lambda(x) =U(x)^*
\Big(D_\mu^{1/2}\, P(x)\,D_\mu^{1/2} \Big)
\,U(x) \ ,\quad \text{ for a.e. } x\in\text{ Spec}(\cV)  \, .
   \eeq
Arguing as in the proof of Lemma \ref{pro factoriz} we see that there exist measurable fields of vectors $v_j:\text{Spec}(\cV)\rightarrow \ell^2(\Z^k)$, for $j\in\IN{d}$, such that $\Bx\,v_j(x)=\lambda_j(\Bx)\ v_j(x)$ and  $\cB(x)=\{v_j(x)\}_{i\in\IN{d}}$ is an ONB of $J_\cV(x)$ for a.e. $x\in \text{Spec}(\cV)$.
We finally consider $B\in L(L^2(\R^k))^+$ S.P. with $R(B)\subset \cV$, uniquely determined by the condition:
\beq\label{eq: falta poquito2} 
\{[B]_x\}_{\cB(x)}= U(x)^*
\Big(D_\mu^{1/2}\, (I-P(x))\,D_\mu^{1/2}\Big)
\,U(x) \ ,\quad \text{ for a.e. } x\in\text{ Spec}(\cV)  \, 
\eeq 
where $\{[B]_x\}_{\cB(x)}$ stands for the matrix representation of $[B]_x$ with respect to the ONB $\cB(x)$ of $J_\cV(x)$; in particular, using that $\rk(I-P(x))=n-d$, we conclude that $\rk([B]_x)\leq n-d$ for a.e. $x\in\text{Spec}(\cV)$.
 On the other hand, by construction of $\cB(x)$, we have that $\{\Bx\}_{\cB(x)}=D_\lambda(x)$: thus, using Eqs \eqref{eq: falta poquito1} and  \eqref{eq: falta poquito2} we have that 
$$ \{\Bx +[B]_x\}_{\cB(x)}= \{\Bx\}_{\cB(x)} + \{[B_x]\}_{\cB(x)}=U(x)^* \ D_\mu\ U(x) \ ,\quad \text{ for a.e. } x\in\text{ Spec}(\cV) \,. $$
This last fact implies $C=\Sast+B\in U_\cV(E(\cG)\,)$ satisfies that $\lambda_j([ C]_x)=\mu_j(x)$ for $j\in\N$ and a.e. $x\in\text{Spec}(\cV)$.
\end{proof}

{\scriptsize
}


\begin{thebibliography}{99}

\bibitem{ACHM07}  A. Aldroubi, C. Cabrelli, D. Hardin, U. Molter, Optimal shift invariant spaces and their Parseval frame generators. Appl. Comput. Harmon. Anal. 23 (2007), no. 2, 273-283.
\bibitem{AC09}M. Anastasio, C. Cabrelli, Sampling in a union of frame generated subspaces. Sampl. Theory Signal Image Process. 8 (2009), no. 3, 261-286.


\bibitem{AMRS} J. Antezana, P. Massey, M. Ruiz, D. Stojanoff, The Schur-Horn theorem for operators and frames with prescribed norms and frame operator. Illinois J. Math. 51 (2007), no. 2, 537-560.

\bibitem{Bhat} R. Bhatia,  Matrix Analysis,
Berlin-Heildelberg-New York, Springer 1997.

\bibitem{BMS14} M.J. Benac, P. Massey, D. Stojanoff, 
Aliasing and oblique dual pair designs for consistent sampling. Linear Algebra Appl. 487 (2015), 112-145.


\bibitem{BF} J.J. Benedetto, M. Fickus, Finite normalized tight frames. Frames. Adv. Comput. Math. 18 (2003), no. 2-4, 357-385.


\bibitem{BDR} C. de Boor, R.  DeVore, A. Ron, The structure of finitely generated shift-invariant spaces in $L\sb 2({\bf R}\sp d)$. J. Funct. Anal. 119 (1994), no. 1, 37-78. 


\bibitem{Bo} M. Bownik. The structure of shift-invariant subspaces of $L\sp 2({\bf R}\sp n)$. J. Funct. Anal. 177 (2000), no. 2, 282-309.


\bibitem{BoJa11} M. Bownik, J. Jasper, Characterization of sequences of frame norms. J. Reine Angew. Math. 654 (2011), 219-244.

\bibitem{BoJa} M. Bownik, J. Jasper, The Schur-Horn theorem for operators with finite spectrum. Trans. Amer. Math. Soc. 367 (2015), no. 7, 5099-5140.

\bibitem{CabPat} C. Cabrelli, V. Paternostro, Shift-invariant spaces on LCA groups. J. Funct. Anal. 258 (2010), no. 6, 2034-2059.

\bibitem{CFMPS} J. Cahill, M. Fickus, D. Mixon, M.  Poteet, N. Strawn, Constructing finite frames of a given spectrum and set of lengths. Appl. Comput. Harmon. Anal. 35 (2013), no. 1, 52-73.


\bibitem{CKFT} P.G. Casazza, M. Fickus, J. Kovacevic, M.T. Leon, J.C. Tremain, A physical interpretation of tight frames. Harmonic analysis and applications, 51--76, Appl. Numer. Harmon. Anal., Birkhäuser Boston, MA, 2006.

\bibitem{CasLeo} P.G. Casazza, M. Leon, Existence and Construction of Finite Frames with a Given Frame Operator, International Journal of Pure and Applied Mathematics, Vol. 63, No. 2 (2010), p. 149-158.


\bibitem{Chong} K.M. Chong, Doubly stochastic operators and rearrangement theorems. J. Math. Anal. Appl. 56 (1976), no. 2, 309-316.

\bibitem{Chrisbook} O. Christensen, An introduction to frames and Riesz bases. Applied and Numerical Harmonic Analysis. Birkhäuser Boston, Inc., Boston, MA, 2003.


\bibitem{YEldar3} O. Christensen, Y.C. Eldar, Oblique dual frames and shift-invariant spaces. Appl. Comput. Harmon. Anal. 17 (2004), no. 1, 48-68.

\bibitem{CE06} O. Christensen, Y.C. Eldar, Characterization of oblique dual frame pairs. EURASIP J. Appl. Signal Process. (2006), 1-11. 

\bibitem{CTbook} T.M. Cover, J.A. Thomas, Joy A. Elements of information theory. Second edition. Wiley-Interscience [John Wiley \& Sons], Hoboken, NJ, 2006.

\bibitem{Deu} F. Deutsch, The angle between subspaces of a Hilbert space. Approximation theory, wavelets and applications (Maratea, 1994), 107-130, NATO Adv. Sci. Inst. Ser. C Math. Phys. Sci., 454, 1995.


\bibitem{Dutk} D. E. Dutkay, The local trace function of shift invariant subspaces, J. Operator Theory 52 (2004),
267-291.

\bibitem{YEldar1} Y.C. Eldar, Sampling with arbitrary sampling and reconstruction spaces and oblique dual frame vectors. J. Fourier Anal. Appl. 9 (2003), no. 1, 77-96.

\bibitem{YEldar2} Y.C. Eldar, T. Werther, General framework for consistent sampling in Hilbert spaces. Int. J. Wavelets Multiresolut. Inf. Process. 3 (2005), no. 4, 497-509.

\bibitem{FMP}
M. Fickus, D. G. Mixon and M. J. Poteet,  Frame completions for
optimally robust reconstruction, Proceedings of SPIE, 8138: 81380Q/1-8 (2011).


\bibitem{DHan08}D. Han, Frame representations and Parseval duals with applications to Gabor frames. Trans. Amer. Math. Soc. 360 (2008), no. 6, 3307-3326.

\bibitem{HKK} C. Heil,  Y.Y. Koo, J.K. Lim, Duals of frame sequences, 
Acta Appl. Math. 107 (2009), No. 1-3 , 75-90. 

\bibitem{Helson} H. Helson, Lectures on Invariant Subspaces,  Academic Press, New York - London, 1964.

\bibitem{HG07} A.A. Hemmat, J.P. Gabardo. The uniqueness of shift-generated duals for frames in shift-invariant subspaces. J. Fourier Anal. Appl. 13 (2007), no. 5, 589-606.

\bibitem{HG09} A.A. Hemmat, J.P. Gabardo, Properties of oblique dual frames in shift-invariant systems. J. Math. Anal. Appl. 356 (2009), no. 1, 346-354.



\bibitem{Janssen} A.J.E.M. Janssen,  The Zak transform and sampling theorems for wavelet subspaces, IEEE Trans. Signal Processing, 41 (1993), 3360-3364.


\bibitem{KaWe} V. Kaftal, G. Weiss, An infinite dimensional Schur-Horn theorem and majorization theory. J. Funct. Anal. 259 (2010), no. 12, 3115-3162.


\bibitem{KLL05} H. O. Kim, R. Y. Lim and J. K. Lim, The infimum cosine angle between two finitely generated shift-invariant spaces and its applications. Appl. Comput. Harmon. Anal. 19 (2005), no. 2, 253-281.

\bibitem{KLL06} H. O. Kim, R. Y. Lim and J. K. Lim, Characterization of the closedness of the sum of two
shift-invariant spaces, J. Math. Anal. Appl., 320 (2006), 381-395.

\bibitem{MaOl} A.W. Marshall, I. Olkin, B.C. Arnold,  Inequalities: theory of majorization and its applications. Second edition. Springer Series in Statistics. Springer, New York, 2011.


\bibitem{MR08} P. Massey, M. Ruiz, Tight frame completions with prescribed norms, Samp. Theory in Sig. and Image Proccesing 7(1) (2008), 1-13.

\bibitem{MR10} P. Massey, M. Ruiz, Minimization of convex functionals over frame operators. Adv. Comput. Math. 32 (2010), no. 2, 131-153.

\bibitem{MRS13} P. Massey, M. Ruiz , D. Stojanoff,  Optimal dual frames and frame completions for majorization. Appl. Comput. Harmon. Anal. 34 (2013), no. 2, 201-223.

\bibitem{MRS14} P. Massey, M. Ruiz, D. Stojanoff, Optimal Frame Completions with Prescribed Norms for Majorization. J. Fourier Anal. Appl. 20 (2014), no. 5, 1111-1140. 

\bibitem{MRS14b} P. Massey, M. Ruiz, D. Stojanoff, Optimal frame completions. Adv. Comput. Math. 40 (2014), no. 5-6, 1011-1042.


\bibitem{MRS13b} P. Massey, M. Ruiz , D. Stojanoff, Multiplicative Lidskii's inequalities and optimal perturbations of frames, 
 Linear Algebra Appl. 469 (2015), 539-568.


\bibitem{PF05} D.P. Palomar,J.R. Fonollosa, Practical algorithms for a family of waterfilling solutions. IEEE Trans. Signal Process. 53 (2005), no. 2, part 1, 686-695.




\bibitem{Pot} M. J. Poteet, Parametrizing finite frames  and optimal frame completions, Doctoral thesis, Graduate School of Engineering and Management, Air Force Institute of Technology, 
Air University. 


\bibitem{RS95}A. Ron, Z. Shen, Frames and stable bases for shift-invariant subspaces of $L\sb 2(\R\sp d)$. Canad. J. Math. 47 (1995), no. 5, 1051-1094.


\bibitem{Ryff}  John V. Ryff, Orbits of $L^1$-functions under doubly stochastic transformations. Trans. Amer. Math. Soc. 117 (1965) 92-100. 

\bibitem{CPB09} G. Scutari, D.P. Palomar, S. Barbarossa, The MIMO iterative waterfilling algorithm. IEEE Trans. Signal Process. 57 (2009), no. 5, 1917-1935.


\bibitem{Shann} C.E. Shannon, Communication in the presence of noise. Proc. I.R.E. 37, (1949). 10-21.

\bibitem{Telat} I. E. Telatar, Capacity of multi-antenna Gaussian channels, Eur. Trans. Telecommun., vol. 10,
no. 6, pp. 585-595, Nov-Dec. 1999.


\bibitem{UnAl} M. Unser, A. Aldroubi, A general sampling theory for nonideal acquisition devices, IEEE Trans Signal Processing, vol. 42 (1994) no. 11, 2915-2925.

\bibitem{UnZe} M. Unser, J. Zerubia,  Generalized sampling: Stability and performance analysis, IEEE Trans.
Signal Processing, 45 (1997), 2941-2950.

\end{thebibliography}
\end{document}